\documentclass{amsart}

\usepackage[a4paper,top=4cm,bottom=4cm,left=2.5cm,right=2.5cm]{geometry}
\usepackage[utf8]{inputenc}

\usepackage{amssymb, amsmath, amsthm, amsfonts, euscript, color}
\usepackage{tabularx}
\usepackage{array}
\usepackage{graphicx} 
\usepackage{comment}
\usepackage[hypertexnames=false,
    backref=page,
    pdftex,
    pdfpagemode=UseNone,
    breaklinks=true,
    extension=pdf,
    colorlinks=true,
    linkcolor=blue,
    citecolor=blue,
    urlcolor=blue,
]{hyperref}
\usepackage{amsrefs}

\usepackage{enumitem}

\usepackage{verbatim}

\theoremstyle{plain}
\newtheorem{theorem}{Theorem}[section]
\newtheorem{lemma}[theorem]{Lemma}
\newtheorem{proposition}[theorem]{Proposition}
\newtheorem{corollary}[theorem]{Corollary}

\newtheorem*{theorem*}{Theorem}

\theoremstyle{definition}
\newtheorem{definition}[theorem]{Definition}

\newtheorem{example}[theorem]{Example}
\newtheorem*{corollary*}{Corollary}
\newtheorem*{example*}{Example}
\newtheorem*{definition*}{Definition}

\theoremstyle{remark}
\newtheorem{remark}[theorem]{Remark}

\newcommand{\Z}{\mathbb{Z}} 
\newcommand{\Q}{\mathbb{Q}} 
\newcommand{\R}{\mathbb{R}} 
\newcommand{\C}{\mathbb{C}} 
\newcommand{\del}{\partial}
\newcommand{\delbar}{\overline{\partial}}
\newcommand{\ddbar}{\partial\overline{\partial}}
\DeclareMathSymbol{\Finv} {\mathord}{AMSb}{"60}

\title[Holomorphically parallelizable solvmanifolds, special metrics and deformations]{Holomorphically parallelizable solvmanifolds with special metrics and their deformations}
\author{Ettore Lo Giudice, Lapo Rubini and Adriano Tomassini}
\address[Ettore Lo Giudice]{
Dipartimento di Scienze Matematiche, Fisiche e Informatiche
Unit\`a di Matematica e Informatica\\
Universit\`a degli Studi di Parma\\
Parco Area delle Scienze 53/A, 43124\\
Parma, Italy \& \\
Dipartimento di Matematica e Informatica \\
Universita degli Studi di Ferrara \\
Via Machiavelli 35, 44121 Ferrara, Italy}
\email{ettore.logiudice@unipr.it, ettore.logiudice@unife.it}

\address[Lapo Rubini]{Dipartimento Di Matematica E Informatica “Ulisse Dini”, Università Degli Studi di Firenze, viale Morgagni 67/a, 50134, Florence, Italy}
\email{lapo.rubini@unifi.it}

\address[Adriano Tomassini]{
Dipartimento di Scienze Matematiche, Fisiche e Informatiche
Unit\`a di Matematica e Informatica\\
Universit\`a degli Studi di Parma\\
Parco Area delle Scienze 53/A, 43124\\
Parma, Italy}
\email{adriano.tomassini@unipr.it}

\keywords{Astheno-K\"ahler metric; Strong K\"ahler with torsion metric; Balanced metric; Deformation of complex structure}
\thanks{The second and the third authors have been supported by the PRIN 2022 project ``Real and Complex Manifolds: Geometry and holomorphic dynamics" (code 2022AP8HZ9). The authors are partially supported by GNSAGA of INdAM}
\subjclass[2020]{53C15; 32G05}

\begin{document}

\begin{abstract}
    We investigate the existence of strong K\"ahler with torsion metrics along deformations of the Iwasawa manifold and of the holomorphically parallelizable Nakamura manifold. We also show that the class of deformations of the holomorphically parallelizable Nakamura manifold yielding a non-left-invariant complex structure admits a balanced metric but does not admit any strong K\"ahler with torsion metric. We then construct the Kuranishi space of a $4$-dimensional holomorphically parallelizable solvmanifold and study whether small deformations of such a manifold admit SKT metrics. Finally, we provide some results concerning the existence of metrics satisfying $\del \delbar \omega = 0$, $\del \delbar \omega^{2} = 0$ on a particular class of $2$-step nilpotent nilmanifolds.
\end{abstract} 
\maketitle
\tableofcontents

\tableofcontents

\section{Introduction}\label{Section 1}

Many compact complex manifolds that do not admit any K\"ahler metric can often be equipped with Hermitian metrics that satisfy weaker differential conditions. The existence of such metrics on a complex manifold yields important analytic and geometric properties. In recent years, alongside the study of the existence of special Hermitian metrics, considerable attention has been paid to the coexistence of different special metrics on the same underlying complex manifolds. 

In the present paper, we mainly focus on three classes of Hermitian metrics: {\em balanced} (also called {\em semi-K\"ahler}) (see \cites{Gauduchon77-fibres,Gauduchon84, GrayHervella80,Michelsohn82}), {\em Strong K\"ahler with torsion} (shortly SKT, also called {\em pluriclosed}) (see \cite{Bismut89}) and {\em astheno-K\"ahler} (see \cite{JostYau93}). 

If $(M^{n},J,g)$ is a Hermitian manifold and we denote by $\omega$ the fundamental form of $g$, then from \cite{Gauduchon97} there exists a one-parameter family of Hermitian connections 
\[
\nabla^{t} = t \nabla^{C} + (1-t) \nabla^{0},
\]
where $\nabla^{C}$ denotes the {\em Chern connection} and $\nabla^{0}$ the {\em first canonical connection}. For $t=-1$, $\nabla^t$ coincides with the so called {\em Bismut connection}, which is the unique Hermitian connection whose torsion tensor is totally skew-symmetric. In this framework, balanced metrics are characterized by the condition $d \omega^{n-1} = 0$, which is equivalent to the Chern connection of $g$ having traceless torsion (see \cite{Michelsohn82}). SKT or pluriclosed metrics are those for which the torsion tensor of the Bismut connection is $d$-closed, a condition equivalently expressed by $\del \delbar \omega = 0$. Finally, astheno-K\"ahler metrics are defined by the equation $\del \delbar \omega^{n-2} = 0$.

These special Hermitian metrics arise naturally in the setting of solvmanifolds and nilmanifolds. Throughout this paper, by a {\em solvmanifold} (resp. {\em nilmanifold}) we mean the compact quotient of a connected, simply connected solvable (resp. nilpotent) Lie group by a lattice, equipped with an integrable almost complex structure. It was proved by E. Abbena and A. Grassi (see \cite{AbbenaGrassi86}) that a complex Lie group admits a left-invariant balanced metric if and only if it is unimodular. Consequently, by a result of H.-C. Wang (see \cite{Wang54}), every {\em holomorphically parallelizable manifold}, i.e., complex manifolds $M$ with holomorphically trivial $T^{1,0}M$, is balanced.

\medskip

The coexistence of different special Hermitian metrics on the same underlying compact complex manifold has been investigated by several authors. More precisely, it is known that a non-K\"ahler compact complex manifold cannot admit a metric that is simultaneously SKT and balanced, nor one that is simultaneously astheno-K\"ahler and balanced (see \cites{AlexandrovIvanov2001, MatsuoTakahashi2001}). Furthermore, in \cite[Theorem 1.1]{FinoVezzoni2016} A. Fino and L. Vezzoni established that a nilmanifold equipped with a left-invariant complex structure cannot admit two different metrics, one SKT and the other balanced, unless the manifold is K\"ahler. They also conjectured that this statement holds for every compact complex manifold (see \cite[Problem 3]{FinoVezzoni15}). We also recall that, in \cite[Corollary 3.2]{FinoKasuyaVezzoni15} the authors proved that if the complex structure is left-invariant, there are no SKT metrics on holomorphically parallelizable solvmanifolds which are not nilmanifolds.

On the other hand, despite the fact that the same metric cannot be both astheno-K\"ahler and balanced unless it is K\"ahler, several examples of compact complex manifolds admitting two different metrics, one astheno-K\"ahler and the other balanced, have been provided in the literature (see \cites{FinoGrantcharovVezzoni19,LatorreUgarte2017,SferruzzaTomassini23}).

\medskip

A complementary direction in the study of special Hermitian metrics concerns their behavior under {\em blow-ups} and {\em modifications}. It is proved by L. Alessandrini and G. Bassanelli that the modification of a balanced manifold remains balanced (see \cite{AlessandriniBassanelli91modifications}). Moreover, it was shown in \cite[Proposition 3.2]{FinoTomassini09} that the SKT condition is stable under blow-ups along a compact complex submanifold. 

In \cite{FinoTomassini11}, the authors introduced another class of Hermitian metrics which are stable under blow-ups along a compact complex submanifold (see \cite[Proposition 2.4]{FinoTomassini11}). These metrics are defined by the following condition:
\begin{equation}\label{FT metrics}
    \del \delbar \omega = 0, \quad  \del \delbar \omega^{2} = 0.
\end{equation}
As shown in \cite{FinoTomassini11}, condition \eqref{FT metrics} forces the metric to satisfy $\del \delbar \omega^{k} = 0$, for all $k \ge 1$. Moreover, an important reason to study these metrics is that they leave the total Monge-Amp\'ere volume unchanged (see \cites{AngellaGuedjLu,GuanLi}).

\medskip

The aim of this paper is to study the existence of metrics satisfying condition \eqref{FT metrics} on a particular class of nilmanifolds equipped with a left-invariant complex structure as well as to study deformations of compact complex manifolds admitting special Hermitian metrics. 

To address the former topic, we study metrics satisfying condition \eqref{FT metrics} on a class of nilmanifolds already studied in the literature (see for example \cites{FinoTomassini11,SferruzzaTomassini23}). Namely, we study the class of nilmanifolds equipped with a left-invariant complex structure that admits a $(1,0)$-coframe satisfying 
\begin{equation}\label{nilpotent Lie algebra T}
    \begin{cases}
        d \varphi^{j} = 0 , \quad  j=1, \dots, n-1, \\
        d \varphi^{n} \in \Lambda^{2} \langle \varphi^{1}, \dots, \varphi^{n-1}, \varphi^{\overline{1}}, \dots, \varphi^{\overline{n-1}} \rangle.
    \end{cases}
\end{equation}
In complex dimension $3$, A. Fino, M. Parton and S. Salamon, in \cite[Theorem 1.2]{FinoPartonSalamon04}, established a fundamental result which completely characterizes the existence of left-invariant SKT metrics on nilmanifolds equipped with a left-invariant complex structure (which coincides with astheno-K\"ahler metrics for dimensional reasons). More precisely, the authors show that either every left-invariant Hermitian metric is SKT or none is. In higher dimensions, we prove that for a nilmanifolds satisfying \eqref{nilpotent Lie algebra T} either every metric satisfies condition \eqref{FT metrics} or none does, proving the following result.

\begin{theorem}[Lemma \ref{tutte skt}, Theorem \ref{Tutte FT}, ]\label{teorema intro T}
    Let $(M\doteq\Gamma\backslash G, J)$ be a nilmanifold of complex dimension $n$ admitting a $(1,0)$-coframe $\{\varphi^1,...,\varphi^n\}$ that satisfies $\eqref{nilpotent Lie algebra T}$. 
    Then, none left-invariant Hermitian metric is SKT or every metric satisfies condition \eqref{FT metrics}. 
\end{theorem}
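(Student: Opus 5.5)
The plan is to reduce everything to one universal $4$-form built from the structure equations, and to show that both $\del\delbar\omega$ and $\del\delbar\omega^2$ are controlled by it.

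\textbf{Structure equations.} Since $J$ is integrable, $d\varphi^n$ has no $(0,2)$-component. Write
\[
d\varphi^n=A+B,\qquad A\in\Lambda^{2,0}\langle\varphi^1,\dots,\varphi^{n-1}\rangle,\quad B\in\Lambda^{1,1}\langle\varphi^1,\dots,\varphi^{n-1},\varphi^{\bar 1},\dots,\varphi^{\overline{n-1}}\rangle.
\]
Then $\del\varphi^n=A$, $\delbar\varphi^n=B$, $\del\varphi^{\bar n}=\bar B$ and $\delbar\varphi^{\bar n}=\bar A$. All of $A,B,\bar A,\bar B$ are $d$-closed, because they are built from the closed forms $\varphi^j,\varphi^{\bar j}$ with $j<n$. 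Define the invariant $4$-form
\[
\Psi\doteq A\wedge\bar A \pm B\wedge\bar B,
\]
with the relative sign to be fixed by the computation below. It depends only on the coframe, not on any metric.

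\textbf{Normalising the coframe.} Let $\omega=i\sum g_{j\bar k}\varphi^j\wedge\varphi^{\bar k}$ be an arbitrary left-invariant Hermitian metric. The structure \eqref{nilpotent Lie algebra T} is preserved by the following changes of coframe:
\begin{itemize}
\item complex linear changes among $\varphi^1,\dots,\varphi^{n-1}$;
\item replacing $\varphi^n$ by $\lambda\varphi^n+\sum_{j<n}c_j\varphi^j$ with $\lambda\neq0$, since $d\varphi^j=0$ for $j<n$.
\end{itemize}
A Gram--Schmidt procedure, taking $\varphi^n$ orthogonal to the span of the others, uses only these changes. So I may assume $\omega=i\sum_{j=1}^n\varphi^j\wedge\varphi^{\bar j}$, at the cost of replacing $A$, $B$ and $\Psi$ by linear transforms. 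In particular, vanishing of $\Psi$ (formed with the transformed $A,B$) is a statement about the coframe that is invariant under these changes, up to the nonzero factor $|\lambda|^2$.

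\textbf{Computing $\del\omega$, $\delbar\omega$ and $\del\delbar\omega$.} With this normalisation only the term $i\varphi^n\wedge\varphi^{\bar n}$ is not closed. Hence
\[
\del\omega=i\bigl(A\wedge\varphi^{\bar n}-\varphi^n\wedge\bar B\bigr),\qquad
\delbar\omega=i\bigl(B\wedge\varphi^{\bar n}-\varphi^n\wedge\bar A\bigr).
\]
Applying $\del$ to $\delbar\omega$ and using that $A,B,\bar A,\bar B$ are closed gives
\[
\del\delbar\omega=c\,\Psi
\]
for a universal nonzero constant $c$. Therefore $\omega$ is SKT if and only if $\Psi=0$. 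By the invariance noted above, this condition is the same for every left-invariant metric. This gives Lemma \ref{tutte skt}: either no left-invariant metric is SKT, or all are.

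\textbf{The second condition.} Assume now $\Psi=0$. Expanding,
\[
\del\delbar\omega^2=2\,\omega\wedge\del\delbar\omega+2\,\del\omega\wedge\delbar\omega
\]
up to signs. The first term vanishes because $\del\delbar\omega=0$. For the second, expand the product of the two expressions above:
\begin{itemize}
\item terms containing $\varphi^{\bar n}\wedge\varphi^{\bar n}$ or $\varphi^n\wedge\varphi^n$ vanish;
\item what remains is $\pm A\wedge\bar A\wedge\varphi^n\wedge\varphi^{\bar n}\pm B\wedge\bar B\wedge\varphi^n\wedge\varphi^{\bar n}$.
\end{itemize}
I expect this to equal $c'\,\Psi\wedge\varphi^n\wedge\varphi^{\bar n}$, which then vanishes. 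So $\del\delbar\omega^2=0$, and by the Fino--Tomassini observation all $\del\delbar\omega^k$ vanish. This yields Theorem \ref{Tutte FT}.

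The main obstacle is the sign bookkeeping in this last step. One must verify that the relative sign between $A\wedge\bar A$ and $B\wedge\bar B$ in $\del\omega\wedge\delbar\omega$ is the same as in $\del\delbar\omega$, so that the same $\Psi$ appears in both. I would check this carefully using the graded Leibniz rule and the commutation of even forms. If the signs disagreed, $\del\omega\wedge\delbar\omega$ would produce a different combination of $A\wedge\bar A$ and $B\wedge\bar B$, and the statement would fail. So this is exactly the point that must come out right.
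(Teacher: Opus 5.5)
Your argument is correct, and the sign you flag as the main obstacle does come out right. In your normalised coframe, $\del\delbar\varphi^{n\overline{n}}=B\wedge\overline{B}-A\wedge\overline{A}$. So $\Psi=A\wedge\overline{A}-B\wedge\overline{B}$ and $\del\delbar\omega=-i\Psi$. The two surviving cross terms give $\del\omega\wedge\delbar\omega=-\Psi\wedge\varphi^{n\overline{n}}$, so $\del\delbar\omega^2=2\,\omega\wedge\del\delbar\omega+2\,\del\omega\wedge\delbar\omega$ vanishes when $\Psi=0$. In complex dimension $3$, $\Psi=0$ is exactly the condition of Theorem \ref{FPS Manifold}.

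You can avoid the bookkeeping altogether. Write $\omega=\omega_0+i\varphi^{n\overline{n}}$ with $\omega_0\doteq i\sum_{j<n}\varphi^{j}\wedge\varphi^{\overline{j}}$, which is closed, and use $(\varphi^{n\overline{n}})^2=0$. Then $\omega^2=\omega_0^2+2i\,\omega_0\wedge\varphi^{n\overline{n}}$, hence $\del\delbar\omega^2=2\,\omega_0\wedge\del\delbar\omega$ directly.

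This is essentially the paper's argument, but the paper needs no Gram--Schmidt step. It keeps an arbitrary $\omega=\sum x_{j\overline{k}}\varphi^{j\overline{k}}$ and notes that $\del\delbar\varphi^{j\overline{k}}=0$ for $(j,k)\neq(n,n)$. For instance, $\delbar\varphi^{j\overline{n}}=-\varphi^{j}\wedge\overline{A}$ is $\del$-closed for $j<n$. Hence $\del\delbar\omega=x_{n\overline{n}}\,\del\delbar\varphi^{n\overline{n}}$ with $x_{n\overline{n}}\neq0$. For $\omega^2$, the paper splits off a part $\varphi^{n\overline{n}}\wedge\beta$, where $\beta$ is a closed $(1,1)$-form in $\varphi^1,\dots,\varphi^{n-1}$ and their conjugates. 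Every remaining term contains at most one of $\varphi^n,\varphi^{\overline{n}}$, and is therefore $\del\delbar$-closed.

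The paper's route is shorter and needs neither the normalisation nor any sign check. Yours gives an explicit, metric-independent obstruction $\Psi$ in terms of the structure constants. It also makes the invariance of the condition transparent, since $\Psi$ only rescales by $|\lambda|^2$ under admissible coframe changes.
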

Examples of compact complex manifolds that admit metrics satisfying condition \eqref{FT metrics} were constructed in \cite[Theorem 2.7]{FinoTomassini11} and \cite[Theorem 5.6]{CiulicaOtimanStanciu}. In the former, examples of $4$-dimensional nilmanifolds admitting such metrics are provided, while in the latter the authors prove the existence of such metrics on a particular class of Endo-Pajitnov manifolds. Here, we construct new examples of nilmanifolds that admit such metrics by using the deformations studied in \cite[Example 1]{PiovaniSferruzza21} (see Example \ref{Condition 1 preservata}).

We recall that, in terms of both finding new examples of manifolds carrying special structures and studying the properties of such structures, {\em deformation theory} (\cite{Kodaira05}) provides another important tool. Several relevant stability results have been established in the literature. Among them, a fundamental result is the stability of the K\"ahler condition, proved in their celebrated paper by K. Kodaira and D. C. Spencer (see \cite{KodairaSpencer60}). Other important results are the stability of the $\del\delbar$-Lemma (e.g. \cites{AngellaTomassini13,Wu06}), and the stability of generalized K\"ahler structures with one pure spinor
under small deformations of generalized complex structures (see \cite{GotoGeneralizedKahler}). Although analogous statements are generally not valid for special Hermitian metrics (see \cites{AlessandriniBassanelli90, FinoTomassini09}), positive results have nevertheless been obtained, concerning either necessary or sufficient conditions (see \cites{RaoWanZhao21, RaoWanZhao18, Sferruzza22, Sferruzza23, PiovaniSferruzza21, AngellaUgarte17, Ciulica25}). It is worth mentioning that an important class of special Hermitian metrics which is stable under small deformation of the complex structure is the class of {\em strongly Gauduchon metrics} (see \cite[Proposition 4.1]{Xiao15}). We recall that a Hermitian metric $g$ is called strongly Gauduchon if its fundamental form $\omega$ is such that $\del \omega^{n-1}$ is $\delbar$-exact (see \cite{Popovici13}). 

\medskip

Another important goal of the paper is to study the coexistence of different special Hermitian metrics along the deformation of a compact complex manifolds. We recall that, in \cite{Nakamura75}, I. Nakamura studied the small deformations of some well known three dimensional manifolds, namely the {\em Iwasawa manifold} and the {\em holomorphically parallelizable Nakamura manifold}. Concerning the Iwasawa manifold, thanks also to \cites{Angella13, AngellaTomassini11}, we investigate in Section \ref{subsection Iwasawa} the SKT condition on its deformations. Note that in \cite{AngellaFranziniRossi}*{Theorem 4.1} all $6$-dimensional nilmanifolds admitting SKT metrics are classified. A direct application of \cite[Theorem 1.2]{FinoPartonSalamon04} shows that sufficiently small deformations of the Iwasawa manifold cannot admit any SKT metric and that there is a deformation connecting the standard complex structure on the Iwasawa manifold to a circle of complex structures that admit an SKT metric. We prove the following.

\begin{theorem}[Theorem \ref{no SKT intorno a Iwasawa}, Theorem \ref{curva balanced SKT}]\label{Deformazioni Iwasawa - introduzione} 
    Sufficiently small deformations of the Iwasawa manifold provided in \cite[p. 95]{Nakamura75} cannot admit SKT metrics. Moreover, the deformation with $t_{11}=t_{22}=0$ and $t_{12}=-t_{21}$ connects the standard complex structure on the Iwasawa manifold to a circle of complex structures that admits SKT metrics, defined by $|t_{21}|^{2} = 2-\sqrt{3}$. For $|t_{21}|^{2}  = 2+\sqrt{3}$, we have another circle of complex structures that admit SKT metrics. 
\end{theorem}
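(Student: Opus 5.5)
The plan is to reduce everything to the three–dimensional criterion of \cite[Theorem 1.2]{FinoPartonSalamon04}. First I will write down the left-invariant $(1,0)$-coframe of Nakamura's small deformations $X_t$ of the Iwasawa manifold \cite[p. 95]{Nakamura75}, following \cites{Angella13, AngellaTomassini11}. In the coordinates $(z_1,z_2,z_3)$ of the Iwasawa manifold this coframe is
\[
\varphi^1_t = dz_1 + t_{11}\, d\bar z_1 + t_{12}\, d\bar z_2,\qquad
\varphi^2_t = dz_2 + t_{21}\, d\bar z_1 + t_{22}\, d\bar z_2,
\]
\[
\varphi^3_t = dz_3 - z_1\, dz_2 - \bigl(t_{21}\bar z_1 + t_{22}\bar z_2\bigr)\, dz_1 + \text{(correction terms in } t\text{)}.
\]
Inverting the linear relations between $\{dz_j, d\bar z_j\}$ and $\{\varphi^j_t,\bar\varphi^j_t\}$, I will express the structure equations as
\[
d\varphi^1_t = d\varphi^2_t = 0,
\]
\[
d\varphi^3_t = \sigma_{12}\,\varphi^{12}_t + \sigma_{1\bar1}\,\varphi^{1\bar1}_t + \sigma_{1\bar2}\,\varphi^{1\bar2}_t + \sigma_{2\bar1}\,\varphi^{2\bar1}_t + \sigma_{2\bar2}\,\varphi^{2\bar2}_t .
\]
Here the $\sigma$'s are explicit rational functions of the $t_{ij}$. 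Their denominators are of the form $1-|t_{11}|^2-\dots$, coming from the inversion. At $t=0$ we have $\sigma_{12}=-1$ and all other $\sigma$'s vanish.

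Thus each $X_t$ is a nilmanifold of the type \eqref{nilpotent Lie algebra T} with $n=3$. By \cite[Theorem 1.2]{FinoPartonSalamon04}, combined with the usual symmetrization argument that reduces SKT metrics on nilmanifolds to left-invariant ones, $X_t$ admits an SKT metric if and only if one (equivalently, every) left-invariant Hermitian metric is SKT. So it suffices to test the diagonal metric
\[
\omega_t = \tfrac{i}{2}\sum_j \varphi^{j\bar j}_t .
\]
Since $\varphi^1_t,\varphi^2_t$ are closed, we get
\[
\partial\bar\partial\omega_t = c\, \partial\varphi^3_t\wedge\bar\partial\bar\varphi^3_t
\]
up to a constant $c$ and the sign of the other cross term. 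Moreover $dd^c\omega_t$ is a multiple of $\varphi^{12\bar1\bar2}_t$. Computing its coefficient from the structure equations gives a real expression of the form
\[
F(t) = |\sigma_{12}|^2 + |\sigma_{1\bar2}|^2 + |\sigma_{2\bar1}|^2 - 2\,\mathrm{Re}\bigl(\sigma_{1\bar1}\bar\sigma_{2\bar2}\bigr),
\]
with signs to be fixed carefully. Then $X_t$ is SKT exactly when $F(t)=0$.

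For the first claim, continuity suffices. We have $F(0)=|\sigma_{12}(0)|^2=1$, and all the other $\sigma$'s are $O(|t|)$. Hence $F(t)\neq 0$ for $t$ in a small neighbourhood of $0$, so no sufficiently small deformation admits an SKT metric.

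For the second claim, I restrict to the family $t_{11}=t_{22}=0$, $t_{12}=-t_{21}$. There the $\sigma$'s simplify, and $F$ becomes a rational function of $x=|t_{21}|^2$ alone. I expect the numerator of $F$ to reduce to a multiple of
\[
x^2-4x+1 .
\]
Its roots are $x=2\pm\sqrt3$, which yields the two circles $|t_{21}|^2=2\mp\sqrt3$ on which $X_t$ is SKT. The first circle, $2-\sqrt3<1$, is reached from $t=0$ along the family, as long as the complex structure stays well defined, i.e.\ the denominators do not vanish. The second, $2+\sqrt3$, lies beyond $x=1$, and I must check that the coframe is still nondegenerate there.

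The main obstacle is the bookkeeping in the explicit computation of the $\sigma$'s. This means inverting the $t$-dependent change of coframe and tracking the conjugate terms in $d\varphi^3_t$. A sign error there would shift the roots of the quadratic. I would first do the computation on the restricted family, where the inversion is $2\times 2$ and symmetric, and only then treat the general neighbourhood statement, which needs only the leading-order terms.
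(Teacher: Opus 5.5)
Your proposal is correct and follows essentially the paper's route. You reduce to the Fino--Parton--Salamon criterion on the left-invariant structure equations of Nakamura's deformations. You get non-existence near $t=0$ by continuity, since the SKT quantity equals $1$ at the origin. You then restrict to $t_{11}=t_{22}=0$, $t_{12}=-t_{21}$. There $\sigma_{1\bar 2}=\sigma_{2\bar 1}=0$, $\sigma_{12}=\frac{x-1}{x+1}$ and $\sigma_{1\bar 1}=\sigma_{2\bar 2}=\frac{t_{21}}{1+x}$, so your $F$, with the sign you wrote (which is the right one), equals $\frac{x^2-4x+1}{(1+x)^2}$. This matches the paper's quartic $-T^4+2T^3+6T^2+2T-1=-(T+1)^2(T^2-4T+1)$.
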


In Section \ref{subsection Nakamura}, we study the SKT condition on some of the deformations of the holomorphically parallelizable Nakamura manifold \cite[III-(3b)]{Nakamura75}. We prove that small deformations belonging to classes $(1)$, $(3)$ and $(4)$ - following Nakamura's classification - admit balanced metrics but do not admit any SKT metric. Moreover, classes $(3)$ and $(4)$ provide examples of solvmanifolds equipped with non-invariant complex structures that admit balanced metrics but do not carry any SKT metric, consistent with the Fino-Vezzoni conjecture (see Theorem \ref{deformation of Nakamura}).

\medskip

To investigate the existence of special metrics along deformations of a complex structure, in Section \ref{Section Kuranishi} we compute the Kuranishi space of the $4$-dimensional solvmanifold studied in \cite[Section 6]{SferruzzaTomassini24} obtained as the quotient of a solvable, non-nilpotent Lie group of complex dimension $4$ with a lattice.  We mention that the Lie group is the $6$th element in the classification of $4$-dimensional complex Lie groups by I. Nakamura in \cite[Section 6]{Nakamura75} and the lattice was constructed in \cite[Section 6]{SferruzzaTomassini24} by following \cite[Example 3.4]{Yamada05}. The group is, in particular, the semidirect-product $\C\ltimes_\varphi N$, where the nilradical $N$ is isomorphic to the $3$-dimensional complex Heisenberg group. Geometrically, this manifold is a holomorphic fibration over a torus with fiber biholomorphic to the Iwasawa manifold. We obtain $4$ classes of deformations, analogous to those described by I. Nakamura in \cite{Nakamura75}. Using the notation and techniques introduced in \cites{RaoZhao15, RaoZhao18_2}, we also compute the structure equations for the first two classes of deformations, establishing the following.

\begin{theorem}[Theorem \ref{teorema 4d}]\label{teo intro 4d}
    Small deformations of the solvmanifold $M_\pi=\Gamma_\pi\backslash G$ defined in \cite[Section 6]{SferruzzaTomassini24} belonging to classes \ref{class14d} and \ref{class24d} do not admit any SKT metric. Moreover, small deformations belonging to class \ref{class14d} do not admit any astheno-K\"ahler metric.
\end{theorem}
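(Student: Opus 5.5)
The plan is to compute explicit structure equations for the deformed complex structures $J_t$ and then apply a duality (obstruction) argument, in the spirit of the approach used for the Nakamura manifold in Theorem \ref{deformation of Nakamura}. First I will fix the structure equations of the Lie algebra of $G=\C\ltimes_\varphi N$. Following \cite{Nakamura75} and \cite[Section 6]{SferruzzaTomassini24}, there is a holomorphic coframe $\varphi^1,\dots,\varphi^4$ such that $d\varphi^1=0$, $d\varphi^2=\varphi^{12}$, $d\varphi^3=-\varphi^{13}$ and $d\varphi^4=\varphi^{23}$ (up to the normalisation used in that paper). These forms descend to $M_\pi$. Using the Beltrami differential $\Phi(t)=\sum_i t_i\,\Phi_i+O(|t|^2)$ computed for classes \ref{class14d} and \ref{class24d}, and following the formalism of \cite{RaoZhao15,RaoZhao18_2}, I will obtain a global $(1,0)$-coframe for $J_t$ of the form $\varphi^i_t=e^{\iota_{\Phi(t)}}\varphi^i$. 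Its differentials $d\varphi^i_t$ are then expressed in the basis $\{\varphi^i_t,\bar\varphi^i_t\}$. The coefficients may be non-constant functions of the form $e^{\pm z_1}$, $e^{\pm\bar z_1}$, as happens in the non-invariant Nakamura classes.

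Second, I will rule out SKT metrics, and where needed astheno-K\"ahler metrics, with the standard obstruction. Suppose $\omega_t$ is a $J_t$-Hermitian metric with $\del\delbar\omega_t=0$. It then suffices to exhibit a closed (or $\del\delbar$-exact-up-to-harmless-terms) real form. The intended candidate is a real $(n-1,n-1)$-form, here a $(3,3)$-form $\Psi_t$, that is positive on $M_\pi$ and that is itself $\del\delbar$-exact, $\Psi_t=\del\delbar\beta_t$. Then
\[
0<\int_{M_\pi}\omega_t\wedge\Psi_t=\int_{M_\pi}\del\delbar\omega_t\wedge\beta_t=0,
\]
which is a contradiction. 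For the holomorphically parallelizable structure ($t=0$) the natural candidate is built from the nonzero torsion directions. Indeed, $\del\delbar(\varphi^{4}\wedge\bar\varphi^4\wedge\cdots)$ produces the square $i\varphi^{23}\wedge\overline{\varphi^{23}}$ wedged with the remaining volume factors $\varphi^{1}\bar\varphi^1$ (and similarly with $\varphi^{13}$). This gives a semipositive, nonzero $(3,3)$-form that is $\del\delbar$-exact. Because positivity is an open condition and the structure constants vary continuously, the same construction with $\varphi^i_t$ in place of $\varphi^i$ should persist for small $t$. The terms coming from $e^{\pm z_1}$ factors are handled by choosing $\beta_t$ so that these factors cancel, which is possible because $|e^{z_1}|$-type products appear in conjugate pairs.

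For the astheno-K\"ahler statement on class \ref{class14d} (here $n=4$, so the condition is $\del\delbar\omega^2=0$), I will run the same argument with a positive $(2,2)$-form $\Theta_t=\del\delbar\gamma_t$. This uses $\int\omega_t^2\wedge\Theta_t>0$. Such a $\Theta_t$ should exist in class \ref{class14d} because the deformation keeps the nilpotent-like part of the structure equations rigid. This also explains why the claim is restricted to that class.

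The main obstacle is the second step. I need to show that the relevant $\del\delbar$-exact form is genuinely positive and nonzero for the deformed structures, with the non-invariant coefficients correctly tracked. My first attempt would be to work at first order in $t$ using the explicit coframe, and to verify strict positivity on the fibre directions using compactness and openness of positivity. If that fails, I would fall back on a symmetrization (averaging) argument over the nilradical $N$ to reduce to an invariant-type computation.
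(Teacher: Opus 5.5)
Your route is the paper's: build the deformed coframe $\varphi^j_t=e^{\iota_{\psi(t)}|\iota_{\overline{\psi(t)}}}(\varphi^j)$, compute its structure equations, and apply the Stokes obstruction of Proposition \ref{Obstructions to the existence of p pluriclosed} to $\alpha=\varphi_t^{1\overline{1}4\overline{4}}$ for SKT, and to a suitable $(1,1)$-form $\gamma_t$ for astheno-K\"ahler. The gap is in how you pass from $t=0$ to $t\neq0$.

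At $t=0$ one gets $\del\delbar\varphi^{1\overline{1}4\overline{4}}=\varphi^{1\overline{1}2\overline{2}3\overline{3}}$, which is a multiple of $\varphi^{123}\wedge\overline{\varphi^{123}}$. Wedged with a Hermitian form, it only sees the $\varphi^{4\overline{4}}$-coefficient of $\omega$. So it corresponds to a rank-one semidefinite Hermitian matrix and lies on the \emph{boundary} of the positive cone, not in its interior. ``Positivity is an open condition'' therefore gives nothing. An arbitrarily small extra term coupling the $Z_3$ and $Z_4$ directions produces the indefinite block $\bigl(\begin{smallmatrix}0&\epsilon\\ \overline{\epsilon}&1\end{smallmatrix}\bigr)$. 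Since $\omega_t$ ranges over all Hermitian metrics on $M_t$, $\int\omega_t\wedge\Psi_t$ could then be made negative. For the same reason, ``strict positivity on the fibre directions'' is already false at $t=0$, and a first-order expansion in $t$ cannot decide the sign of a form that is degenerate at $t=0$. The averaging fallback is not available as stated either: the coefficients of $J_t$ in the invariant frame depend on $e^{z_1-\overline{z_1}}$, so symmetrization does not preserve $J_t$-type.

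What is needed is that $\del_t\delbar_t\varphi_t^{1\overline{1}4\overline{4}}$ is \emph{exactly} $c_t\,\varphi_t^{1\overline{1}2\overline{2}3\overline{3}}$, with $c_t$ a real function of fixed sign. The paper gets this by writing out the deformed structure equations in full. You can also see it without the full computation:
\begin{itemize}
\item $\varphi_t^1$ is closed;
\item $d\varphi_t^2$ and $d\varphi_t^3$ lie in the ideal $\mathcal{I}$ generated by $\varphi_t^1,\varphi_t^{\overline{1}}$;
\item all coefficients depend only on $z_1-\overline{z_1}$, so their differentials also lie in $\mathcal{I}$.
\end{itemize}
Write $d\varphi_t^4\equiv a\varphi_t^{23}+b\varphi_t^{2\overline{3}}+b'\varphi_t^{3\overline{2}}+p\varphi_t^{2\overline{2}}+q\varphi_t^{3\overline{3}}$ modulo $\mathcal{I}$. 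Then the factor $\varphi_t^{1\overline{1}}$ kills everything except $\delbar\varphi_t^4\wedge\del\varphi_t^{\overline{4}}-\del\varphi_t^4\wedge\delbar\varphi_t^{\overline{4}}$. This gives $c_t=|a|^2+|b|^2+|b'|^2-2\mathfrak{Re}(p\overline{q})$. In class \ref{class24d} this reproduces the paper's value $f^{-2}\bigl(2(1-|\lambda|^2)^2-f(1+|\lambda|^2)\bigr)$. Only after this reduction to a scalar, with $c_0=1$ and $|E|=1$, does continuity finish the proof.

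You also never name the astheno-K\"ahler form. The paper uses $\gamma_t=\varphi_t^{2\overline{2}}$. This works in class \ref{class14d} because $d\varphi_t^2\in\Lambda^2\langle\varphi_t^1,\varphi_t^2,\varphi_t^{\overline{1}},\varphi_t^{\overline{2}}\rangle$ with constant coefficients. That forces $\del\delbar\varphi_t^{2\overline{2}}=T_1^2|1-t_{11}|^2\varphi_t^{1\overline{1}2\overline{2}}$. In class \ref{class24d} the parameters $t_{23},t_{32}$ mix $\varphi^2$ and $\varphi^3$, and this forcing is lost. That, rather than ``rigidity of the nilpotent part'', is why the astheno-K\"ahler statement is confined to class \ref{class14d}.
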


The present paper is organized as follows. In Section \ref{Section 2} we introduce the notation adopted throughout the paper and recall some basic results on special Hermitian metrics and on the theory of deformations of complex manifolds. Section \ref{section 3} consists of the proof of the theoretical results on nilmanifolds with structure equations as in \eqref{nilpotent Lie algebra T}, namely Theorem \ref{teorema intro T}, and the construction of new examples of nilmanifolds admitting metrics satisfying condition \eqref{FT metrics} in Example \ref{Condition 1 preservata}. In Section \ref{section 4} we consider the deformations of the Iwasawa manifold and of the holomorphically parallelizable Nakamura manifold computed in \cite{Nakamura75} and  we study the SKT condition on these examples. Lastly, Section \ref{Section Kuranishi} is devoted to the computation of the Kuranishi space of the $4$-dimensional solvmanifold considered in \cite[Section 6]{Sferruzza23}. Furthermore, for classes \ref{class14d} and \ref{class24d} of deformations, we compute the complex structure equations of the deformed manifold, proving Theorem \ref{teo intro 4d}.

\vskip.3truecm
\noindent{\em \underline{Acknowledgments:}} The authors would like to thank the  Erd\H{o}s Center and the R\'enyi Institute, where part of the present paper was written, for the excellent working environment provided. The authors are grateful to Daniele Angella, Hisashi Kasuya and Gueo Grantcharov for their helpful remarks, which contributed to improving the results of the paper. Many thanks also to Nicoletta Tardini, Adela Latorre, Anna Fino, Luigi Vezzoni, and Jonas Stelzig for their valuable comments and interest in the paper.

\section{Preliminaries}\label{Section 2}

\subsection{Preliminaries on special Hermitian metrics}
In this section, we recall some properties of the metrics satisfying condition \eqref{FT metrics} and an obstruction for the existence of metrics such that $\del \delbar \omega^{p} = 0$ on compact complex manifolds. 

Let $(M,J,g)$ be a Hermitian manifold; we denote by $\omega$ the fundamental form of $g$, i.e., $\omega(\cdot , \cdot) = g(J \cdot, \cdot)$. 
As shown in \cite{FinoTomassini11}, it is straightforward to verify that, if $g$ is a metric such that $\del \delbar \omega = 0$, $\del\delbar \omega^{2} = 0$, then $\del \delbar \omega^{p} = 0$ for all $p=1, \dots, n$. Indeed, for $p > 2$, we have
\begin{equation*}
    \del \delbar \omega^{p} = p \, \del (\delbar \omega \wedge \omega^{p-1})  = p \big(\del \delbar \omega \wedge \omega - (p-1) \delbar \omega \wedge \del \omega\big)  \wedge \omega^{p-2},
\end{equation*}
thus $\del \delbar \omega^{p} = 0$.

We recall that since $\omega^{p}$ is a {\em non-degenerate strongly positive} form (see \cite[Corollary 1.10]{HarveyKnapp74}), if $\del \delbar \omega^{p} = 0$, then it is a {\em $p$-pluriclosed structure} on $M$, i.e., it is a $\del \delbar$-closed, transverse $(p,p)$-form. We refer to \cite[Definition 2.1]{Alessandrini17} for the definitions of strongly positive and transverse forms.

An obstruction for the existence of metrics such that $\del \delbar \omega^{p} = 0$ is given by the following result.
\begin{proposition}\cite[Lemma 3.5]{SferruzzaTomassini23}\label{Obstructions to the existence of p pluriclosed}
    Let $(M,J)$ be a compact complex manifold of dimension $n$. Let $\alpha$ be a $(2n -2p - 2)$-form such that 
    \begin{equation*}
        (\del\delbar \alpha)^{n-p,n-p} = \sum_{i} c_{i} \psi^{i} \wedge \overline{\psi^{i}},
    \end{equation*}
    where $\psi^{i}$ are decomposable $(n-p,0)$-forms and $c_{i}$ are constants with the same sign. Then, there are no $p$-pluriclosed structures on $M$.
\end{proposition}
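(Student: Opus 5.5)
The plan is a duality argument: pair a hypothetical $p$-pluriclosed form $\Omega$ against $\del\delbar\alpha$, use Stokes' theorem to show the pairing vanishes, and use positivity to show it does not.

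Suppose, for contradiction, that $\Omega$ is a $p$-pluriclosed structure on $M$. Then $\Omega$ is a real $(p,p)$-form with $\del\delbar\Omega = 0$, and it is transverse (strictly positive). Since $M$ is compact without boundary, I would consider the integral
\[
I \doteq \int_M \Omega \wedge \del\delbar \alpha .
\]
By bidegree, only the $(n-p,n-p)$-component of $\del\delbar\alpha$ contributes, because $\Omega$ has bidegree $(p,p)$ and $\del\delbar\alpha$ has total degree $2n-2p$. Hence
\[
I = \sum_i c_i \int_M \Omega \wedge \psi^i\wedge\overline{\psi^i}.
\]

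The first step is to show that $I = 0$. Integrating by parts twice, using $d = \del + \delbar$ and Stokes' theorem, gives
\[
\int_M \Omega\wedge \del\delbar\alpha = \int_M \del\delbar\Omega \wedge \alpha = 0 .
\]
Concretely, $\Omega\wedge\del\delbar\alpha$ differs from $\del\delbar\Omega\wedge\alpha$ by exact forms: write $d(\Omega\wedge\delbar\alpha)$ and $d(\del\Omega\wedge\alpha)$, and discard the components of the wrong bidegree, which integrate to zero. There is a sign to track, but the vanishing does not depend on it.

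The second step is positivity. Each $\psi^i$ is a decomposable $(n-p,0)$-form, so $\sigma_{n-p}\,\psi^i\wedge\overline{\psi^i}$ is a weakly positive $(n-p,n-p)$-form, where $\sigma_{n-p}$ is the usual unimodular constant $i^{(n-p)^2}$ (up to sign convention). Since $\Omega$ is transverse, $\Omega\wedge\sigma_{n-p}\psi^i\wedge\overline{\psi^i}$ is a nonnegative multiple of the volume form, and it is strictly positive at every point where $\psi^i \neq 0$. Because all the $c_i$ have the same sign, the terms in $I$ cannot cancel. So $I = 0$ forces $\Omega\wedge\psi^i\wedge\overline{\psi^i}\equiv 0$ for every $i$ with $c_i\neq 0$, and hence $\psi^i\equiv 0$ wherever $c_i\neq 0$.

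That contradicts the implicit nontriviality assumption that $(\del\delbar\alpha)^{n-p,n-p}\neq 0$; this assumption is necessary, since otherwise the statement is vacuous. The main care point is the positivity step: one must use the right normalization constant for $\psi\wedge\overline\psi$ and the fact that strict transversality of $\Omega$ gives strict positivity against nonzero decomposable forms. This is exactly the duality between transverse forms and strongly positive forms in the sense of \cite[Definition 2.1]{Alessandrini17}.
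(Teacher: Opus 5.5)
Your proposal is correct and follows essentially the same route as the source the paper relies on. The paper gives no proof of its own and cites \cite[Lemma 3.5]{SferruzzaTomassini23}, whose argument is this same Stokes-plus-transversality duality: $0=\int_M \del\delbar\Omega\wedge\alpha=\int_M \Omega\wedge\del\delbar\alpha=\sum_i c_i\int_M\Omega\wedge\psi^i\wedge\overline{\psi^i}$, and the right-hand side is nonzero because $\Omega$ is transverse and the $c_i$ share a sign. Your remark that the hypothesis must also include $\sum_i c_i\psi^i\wedge\overline{\psi^i}\not\equiv 0$ is correct; this condition is implicit in the statement.
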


\subsection{Preliminaries on deformation theory}
This section is devoted to recalling some fundamental notions and results in deformation theory and to establishing the notation that will be adopted in the subsequent sections. 

We start by recalling the following definition.
\begin{definition}
    Let $B$ be a domain of $\R^{m}$ (resp. $\C^{m}$) and let $\{M_{t}\}_{t \in B}$ be a set of compact complex manifolds. We say that $\{M_{t}\}_{t \in B}$ is a {\em differentiable (resp. holomorphic) family of compact complex manifolds} if there exists a differentiable (resp. complex) manifold $\mathcal{M}$ and a differentiable (resp. holomorphic), proper map $\pi : \mathcal{M} \to B$ such that 
    \begin{enumerate}
        \item $\pi^{-1}(t) = M_{t}$ as complex manifolds, $\forall t \in B$;
        \item the rank of the Jacobian of $\pi$ is constant and equal to the dimension of $B$, for every $p \in \mathcal{M}$.
    \end{enumerate}
    If $M_{t_{0}} = M$, we say that $M_{t}$ is a deformation of $M$,  $\forall t \in B$.
\end{definition}

The compact complex manifold $(M_{0},J_{0})$ is called the {\em central fiber} of the differentiable (resp. holomorphic) family of compact complex manifolds and it will be denoted by $(M,J)$.

Consider a differentiable family of compact complex manifolds $\{M_{t}\}_{t \in B}$. By a result of C. Ehresmann (see \cite{Ehresmann47}, \cite[Proposition 6.2.2]{Huybrechts05}), $M_{t}$ is diffeomorphic to $M_{\widetilde{t}}$, for all $t, \widetilde{t} \in B$. For simplicity, we assume that $B \doteq \{(t_{1}, \dots, t_{m}) \in \R^{m} \, |  \, |t_{j}| < \varepsilon, j =1, \dots m\}\subseteq \R^{m}$, for $\varepsilon > 0$ and that $t_{0} = 0$. We recall that if the real dimension of $B$ is $1$, the differentiable family $\{M_{t}\}_{t \in I}$ is called a {\em curve of deformations} of $(M,J)$.

\medskip

By \cite[Chapter 4, Section 4.1 (b)]{Kodaira05} and \cite{KodairaMorrow06}, we can parametrize the complex structure on each $M_{t}$, for $t \in B$, with a $(0,1)$-vector form $\varphi(t)$ on $(M,J)$, i.e., $\varphi(t) \in \mathcal{A}^{0,1}(T^{1,0}(M))$. In particular, 
\begin{equation*}
    \varphi(0) = 0, \quad \delbar \varphi(t) - \frac{1}{2}[\varphi(t),\varphi(t)] = 0,
\end{equation*}
where $[\cdot\, , \cdot]$ is a suitable bracket. The second equation is known as the Maurer-Cartan equation for  $\varphi(t)$, which is equivalent to the integrability of the deformed complex structure. If $(z_{1}, \dots, z_{n})$ denotes the local holomorphic coordinates on $(M,J)$, then by \cite[Chapter 4]{KodairaMorrow06}, the local expression of $\varphi(t)$ is given by 
\begin{equation*}
    \varphi(t) = \sum_{\lambda = 1}^{n} \varphi^{\lambda} \otimes \frac{\del}{\del z^{\lambda}},
\end{equation*}
where $\varphi^{\lambda}$ is a global $(0,1)$-form on $(M,J)$. 

Moreover, from the theory of infinitesimal deformations developed by K. Kodaira and D. C. Spencer, it turns out that a differentiable function $f$ defined on the central fiber is holomorphic with respect to the complex structure of $M_{t}$ if and only if 
\begin{equation*}
    \big(\delbar -  \sum_{\lambda=1}^{n} \varphi^{\lambda} \otimes \frac{\del}{\del z^{\lambda}} \big) f  = 0
\end{equation*}
(see \cite[Chapter 4, Proposition 1.2]{KodairaMorrow06}).
\medskip

We now recall the notation and some results established in \cites{RaoZhao15, RaoZhao18_2}. These results provide a direct link between $\mathcal{A}^{p,q}(M_{t})$ and $\mathcal{A}^{p,q}(M)$. Let $\phi$ be a $(0,1)$-vector form, i.e., $\phi = \eta \otimes Z \in \mathcal{A}^{0,1}(T^{1,0}(M))$, where $\eta \in \mathcal{A}^{0,1}(M)$, $Z \in \Gamma(M,T^{1,0}(M))$. We can define the {\em contraction operator}
\begin{equation*}
    \iota_{\phi} : \mathcal{A}^{p,q}(M) \to \mathcal{A}^{p-1, q+1}(M), \quad \sigma \mapsto \iota_{\phi}(\sigma) = \eta \wedge (\iota_{Z} \sigma),
\end{equation*}
where $\iota_{Z}$ is the usual interior product. We will also use the notation $\phi \lrcorner \, \sigma$ to denote the contraction operator. 

Since $\varphi(t) \in \mathcal{A}^{0,1}(T^{1,0}(M))$, we can define the following operators
\begin{equation}\label{exponential of phi}
    e^{\iota_{\varphi(t)}} \doteq \sum_{k=0}^{\infty} \frac{1}{k!} \iota^{k}_{\varphi(t)}, \quad e^{\iota_{\overline{\varphi(t)}}} \doteq \sum_{k=0}^{\infty} \frac{1}{k!} \iota^{k}_{\overline{\varphi(t)}},
\end{equation}
where $\iota^{k}_{\varphi(t)}$ denotes the contraction by $\varphi(t)$ applied $k$-times, and $\overline{\varphi(t)} \in \mathcal{A}^{1,0}(T^{0,1}(M))$ is the conjugate of $\varphi(t)$. Note that the sums are indeed finite since $M$ is finite dimensional. These operators were used in \cite{RaoZhao15} to introduce the map
\begin{equation*}
    e^{\iota_{\varphi(t)}|\iota_{\overline{\varphi(t)}}} : \mathcal{A}^{p,q}(M) \to \mathcal{A}^{p,q}(M_{t}),
\end{equation*}
defined in the following way: if $\sigma \in \mathcal{A}^{p,q}(M)$ is locally written as $\sigma = \sigma_{i_{1} \dots i_{p} j_{1} \dots j_{q}} dz^{i_{1}} \wedge \dots \wedge dz^{i_{p}} \wedge dz^{\overline{j_{1}}} \wedge \dots \wedge dz^{\overline{j_{q}}}$, then 
\begin{equation*}
    e^{\iota_{\varphi(t)}|\iota_{\overline{\varphi}}} (\sigma) \doteq \sigma_{i_{1} \dots i_{p} j_{1} \dots j_{q}}  e^{\iota_{\varphi(t)}}(dz^{i_{1}} \wedge \dots \wedge dz^{i_{p}}) \wedge (e^{\iota_{\overline{\varphi(t)}}} (dz^{\overline{j_{1}}} \wedge \dots \wedge dz^{\overline{j_{q}}}) ).
\end{equation*}
\begin{lemma}\label{Isomorfismo tra lo spazio delle forme}\cite[Lemma 2.9]{RaoZhao18_2}
    The map $e^{\iota_{\varphi(t)}|\iota_{\overline{\varphi(t)}}} : \mathcal{A}^{p,q}(M) \to \mathcal{A}^{p,q}(M_{t})$ is a real linear isomorphism for $t$ arbitrarily small and for any $p,q$.
\end{lemma}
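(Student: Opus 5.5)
The plan is to reduce the statement to a pointwise linear-algebra fact, which is possible because $e^{\iota_{\varphi(t)}|\iota_{\overline{\varphi(t)}}}$ is defined fibrewise. On a coordinate chart the coefficients $\sigma_{i_1\dots j_q}$ are carried over unchanged and only the basis elements are modified, so the map is $C^\infty(M)$-linear. It is therefore induced by a smooth bundle map
\[
\Lambda^{p,q}T^*_xM \to \Lambda^{p,q}_{J_t}T^*_xM,
\]
and it suffices to show that this bundle map is an isomorphism at every point $x$.

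First I will identify $\Lambda^{1,0}_{J_t}$. The $(1,0)$-forms of $M_t$ are spanned by $e^{\iota_{\varphi(t)}}(dz^i)=dz^i+\varphi(t)\lrcorner dz^i = dz^i+\varphi^i$. This is the standard Kodaira--Spencer description: $f$ is $J_t$-holomorphic exactly when $(\delbar-\varphi^\lambda\otimes\del/\del z^\lambda)f=0$, so the differentials of $J_t$-holomorphic functions lie in the span of the $dz^i+\varphi^i$. Conjugating, $e^{\iota_{\overline{\varphi(t)}}}(dz^{\bar j}) = dz^{\bar j}+\overline{\varphi^j}$ span $\Lambda^{0,1}_{J_t}$.

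Next I will check that $e^{\iota_\varphi}$ acts multiplicatively on the holomorphic part. Since $\iota_\varphi$ is a derivation of degree zero on the total form degree, shifting type by $(-1,+1)$, its exponential is an algebra automorphism. Hence
\[
e^{\iota_\varphi}(dz^{i_1}\wedge\dots\wedge dz^{i_p})=(dz^{i_1}+\varphi^{i_1})\wedge\dots\wedge(dz^{i_p}+\varphi^{i_p}),
\]
and the analogous identity holds for the conjugate. So the image of a basis element $dz^I\wedge dz^{\bar J}$ is the wedge $\theta^I\wedge\bar\theta^J$, where $\theta^i\doteq dz^i+\varphi^i$.

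The remaining step is that, for $t$ small, the forms $\{\theta^i,\bar\theta^j\}$ form a basis of $T^*_{x}M\otimes\C$. Expanding in the basis $\{dz,d\bar z\}$, the change-of-basis matrix is
\[
\begin{pmatrix} I & \Phi \\ \overline{\Phi} & I\end{pmatrix},
\]
where $\Phi$ is the matrix of coefficients of $\varphi$. By compactness of $M$ and since $\varphi(0)=0$, we have $\sup_M\|\varphi(t)\|<1$ for $t$ small, so this matrix is invertible uniformly on $M$. Then $\{\theta^I\wedge\bar\theta^J\}_{|I|=p,|J|=q}$ is a basis of $\Lambda^{p,q}_{J_t}$, being the standard basis built from a $(1,0)$-coframe of $J_t$. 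The fibrewise map sends a basis to a basis, so it is bijective, and the inverse is smooth because it is given by the inverse matrix.

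Real linearity is immediate, and in fact the map is $\C$-linear. The step I expect to be the main obstacle is the chart-independence of the definition, namely that the local formula patches to a global map. I would handle this with the same automorphism argument used above: $e^{\iota_\varphi}$ commutes with holomorphic changes of the coframe $dz$, while the $(0,q)$-part uses only $\overline{\varphi}$, so both factors transform tensorially and the local definitions agree on overlaps.
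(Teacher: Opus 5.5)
Your proof is correct and follows the standard argument. The paper gives no proof of its own and simply quotes Lemma 2.9 of Rao--Zhao, which likewise shows that the map sends the local frame $dz^I\wedge d\overline{z}^J$ to $e^{\iota_{\varphi}}(dz^I)\wedge e^{\iota_{\overline{\varphi}}}(d\overline{z}^J)$, a local frame of $(p,q)$-forms on $M_t$ for small $t$ by the Kodaira--Spencer description of $J_t$; your chart-independence worry is also settled at once by the identity $e^{\iota_{\varphi}|\iota_{\overline{\varphi}}}=(I+\varphi+\overline{\varphi})\Finv$ recorded right after the lemma, which exhibits the map as the pullback by the global bundle endomorphism $I+\varphi+\overline{\varphi}$.
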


Moreover, by using the following operator, which is called {\em simultaneous contraction},
\begin{equation*}
    \varphi(t) \Finv \, \sigma \doteq \sigma_{i_{1} \dots i_{p} j_{1} \dots j_{q}} \iota_{\varphi(t)} (dz^{1}) \wedge \dots \wedge \iota_{\varphi(t)} ( dz^{i_{p}}) \wedge \iota_{\varphi(t)} (dz^{\overline{j_{1}}})\wedge \dots \wedge \iota_{\varphi(t)}( dz^{\overline{j_{q}}}).
\end{equation*}
we can rewrite $e^{\iota_{\varphi(t)}|\iota_{\overline{\varphi(t)}}}$ as 
\begin{equation*}
    e^{\iota_{\varphi(t)}|\iota_{\overline{\varphi(t)}}} = \big(\text{I} + \varphi(t) + \overline{\varphi}(t) \big) \Finv.
\end{equation*}

By \cite[Proposition 2.7, Proposition 2.13]{RaoZhao18_2}, we can express the action of the operator $\del_{t}$ and $ \delbar_{t}$ on differentiable functions on $M$ with values in $\C$ in the following way: 
\begin{equation*}
    \begin{split}
        & \del_{t} f = e^{\iota_{\varphi}} \Big(\big(I - \varphi \overline{\varphi} \big)^{-1} \lrcorner \, \big(\del - \overline{\varphi} \lrcorner \, \delbar \big) f \Big), \\
        & \delbar_{t} f =  e^{\iota_{\overline{\varphi}}} \Big(\big(I - \overline{\varphi} \varphi  \big)^{-1} \lrcorner \, \big(\delbar - \varphi \lrcorner \, \del \big) f \Big),
    \end{split}
\end{equation*}
where $\varphi \overline{\varphi} \doteq \overline{\varphi} \lrcorner \, \varphi$, $\overline{\varphi} \varphi \doteq \varphi \lrcorner \, \overline{\varphi}$ and $\varphi \doteq \varphi(t)$. Finally, the explicit expressions of $\del_{t}$ and $ \delbar_{t}$ on $(p,q)$-forms on $M_{t}$ written as $e^{\iota_{\varphi}|\iota_{\overline{\varphi}}} \alpha$ for $\alpha \in \mathcal{A}^{p,q}(M)$ is the following:
\begin{equation}\label{del operator on M_t}
    \del_{t} \big(e^{\iota_{\varphi}|\iota_{\overline{\varphi}}} \alpha \big) = e^{\iota_{\varphi}|\iota_{\overline{\varphi}}} \Big(\big(I - \varphi \overline{\varphi} \big)^{-1} \Finv \, \big([\delbar,\iota_{\overline{\varphi}}] + \del \big) (I - \varphi \overline{\varphi} \big) \Finv \, \alpha \Big),
\end{equation}
\begin{equation}\label{delbar operator on M_t}
    \delbar_{t} \big(e^{\iota_{\varphi}|\iota_{\overline{\varphi}}} \alpha \big) = e^{\iota_{\varphi}|\iota_{\overline{\varphi}}} \Big(\big(I - \overline{\varphi} \varphi \big)^{-1} \Finv \, \big([\del,\iota_{\varphi}] + \delbar \big) (I - \overline{\varphi} \varphi \big) \Finv \, \alpha \Big).
\end{equation}

\begin{remark}
    Using the linear isomorphism $e^{\iota_{\varphi(t)}|\iota_{\overline{\varphi(t)}}} : \mathcal{A}^{p,q}(M) \to \mathcal{A}^{p,q}(M_{t})$, we can reformulate the Maurer-Cartan equation for $\varphi(t)$ in a more suitable way. If $\{\psi^{1}, \dots, \psi^{n}\}$ is a global coframe of $(1,0)$-forms on $(M,J)$, then the Maurer-Cartan equation is satisfied if 
    \begin{equation*}
        (d \psi_{t}^{j})^{0,2} = 0, \quad \forall j=1, \dots, n,
    \end{equation*}
    where $\psi_{t}^{j} \doteq e^{\iota_{\varphi(t)}|\iota_{\overline{\varphi(t)}}} (\psi^{j})$, for $j=1, \dots, n$.
\end{remark}

\section{Special Hermitian metrics}\label{section 3}

\subsection{Existence results for SKT and astheno-K\"ahler metrics}

In this section, we study the existence of SKT metrics and metrics satisfying condition \eqref{FT metrics} on a class of nilmanifolds equipped with $2$-step nilpotent complex structures. 

Let $(\Gamma \backslash G,J)$ be a nilmanifold equipped with a left-invariant complex structure $J$ and denote by $\mathfrak{g}$ the Lie algebra of $G$. We recall that the ascending series $\{\mathfrak{g}_{i}^{J}\}$ (compatible with the complex structure $J$) is defined by
\begin{equation*}
    \mathfrak{g}^{J}_{0} \doteq \{0\}, \quad \mathfrak{g}^{J}_{i} \doteq \{X \in \mathfrak{g} \enskip | \enskip [X, \mathfrak{g}]\subseteq \mathfrak{g}^{J}_{i-1}, \quad [J X, \mathfrak{g}] \subseteq \mathfrak{g}^{J}_{i-1} \}.
\end{equation*}
The complex structure $J$ is called {\em nilpotent} (see \cites{CorderoFernandezGrayUgarte97, CorderoFernandezGrayUgarte00}) if there exists a positive integer $t>0$ such that $\mathfrak{g}^{J}_{t} = \mathfrak{g}$. Moreover, an equivalent condition for the nilpotency of $J$ is that there exists a coframe of left-invariant $(1,0)$-forms $\{\varphi^{i}\}_{i=1, \dots n}$  such that 
\begin{equation}\label{J-nilpotent}
    d \varphi^{s} = \sum_{j<k<s} a_{sjk} \varphi^{j k} + \sum_{j,k <s} b_{sj\overline{k}} \varphi^{j \overline{k}}, \quad s=1,\dots n,
\end{equation}
where $a_{sjk}, b_{sj\overline{k}} \in \C$. 

In the following, we focus on nilmanifolds $(M \doteq\Gamma \backslash G, J)$ of complex dimension $n$ equipped with a left-invariant complex structure $J$ admitting a $(1,0)$-coframe $\{\varphi^{1}, \dots, \varphi^{n}\}$ that satisfies the following complex structure equations:
\begin{equation}\label{Nilpotent Lie algebra sezione 3}
    \begin{cases}
        d \varphi^{j} = 0 , \quad  j=1, \dots, n-1, \\[3pt]
        d \varphi^{n} \in \Lambda^{2} \langle \varphi^{1}, \dots, \varphi^{n-1}, \varphi^{\overline{1}}, \dots, \varphi^{\overline{n-1}} \rangle.
    \end{cases}
\end{equation}
We begin by recalling a result of A. Fino, M. Parton and S. Salamon, which characterizes the existence of SKT metrics on nilmanifolds of complex dimension $3$.

\begin{theorem}\cite[Theorem 1.2]{FinoPartonSalamon04} \label{FPS Manifold}
    Let $(M \doteq \Gamma \backslash G)$ be a $6$-dimensional nilmanifold with an invariant complex structure $J$. Then the SKT condition is satisfied by either all invariant Hermitian metrics $g$ on $M$ or by none. Indeed, it is satisfied if and only if $J$ has a basis $(\alpha^{i})$ of $(1,0)$-forms such that 
    \begin{equation*}
        \begin{cases}
            d \alpha^{1} = 0, \\[3pt]
            d \alpha^{2} = 0, \\[3pt]
            d \alpha^{3} = A \alpha^{\overline{1}2} + B \alpha^{\overline{2}2} + C \alpha^{1 \overline{1}} + D \alpha^{1 \overline{2}} + E \alpha^{12},
        \end{cases}
    \end{equation*}
    where $A,B,C,D,E$ are complex numbers such that 
    \begin{equation*}
        |A|^{2} + |D|^{2} + |E|^{2} +  2 \mathfrak{Re}(\overline{B}C) = 0.
    \end{equation*}
\end{theorem}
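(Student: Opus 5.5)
The plan is to reduce to left-invariant data and then compute $\partial\overline{\partial}\omega$ explicitly in a well-adapted coframe. First, by the standard symmetrization argument (averaging a Hermitian metric over the compact quotient with respect to a bi-invariant volume form on $G$), the existence of an SKT metric implies the existence of an invariant one. So it suffices to work on the Lie algebra $\mathfrak{g}$ with a generic invariant fundamental form
\[
\omega=\tfrac{i}{2}\sum_{j,k} g_{j\overline{k}}\,\alpha^{j\overline{k}},\qquad (g_{j\overline{k}})>0 .
\]

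Next I would normalize the structure equations. I would use the known classification of complex structures on $6$-dimensional nilpotent Lie algebras. For nilpotent $J$, Salamon's result gives a $(1,0)$-basis with
\[
d\alpha^1=0,\qquad d\alpha^2\in\Lambda^2\langle\alpha^1,\alpha^{\overline{1}}\rangle,\qquad d\alpha^3\in\Lambda^2\langle\alpha^1,\alpha^2,\alpha^{\overline{1}},\alpha^{\overline{2}}\rangle,
\]
which is \eqref{J-nilpotent} for $n=3$. The non-nilpotent complex structures live only on $\mathfrak{h}_{19}^-$ and $\mathfrak{h}_{26}^+$, and I would treat them separately. Then:
\begin{itemize}
\item Since $\omega$ is real of type $(1,1)$ and all the $d\alpha^j$ are combinations of products of earlier forms, $\partial\overline{\partial}\omega$ is a $(2,2)$-form.
\item The only terms that can survive in $\partial\overline{\partial}\omega$ come from $\partial\overline{\partial}\alpha^{j\overline{k}}$ with $j$ or $k$ equal to $3$ (or equal to $2$ when $d\alpha^2\neq0$).
\end{itemize}

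The key computation is the case $d\alpha^2=0$. Here $d\alpha^1=d\alpha^2=0$, so the mixed terms behave as follows:
\begin{itemize}
\item $\partial\overline{\partial}(\alpha^{j\overline{3}})$ and $\partial\overline{\partial}(\alpha^{3\overline{j}})$ for $j=1,2$ reduce to $\alpha^j\wedge\partial\overline{\partial}\alpha^{\overline{3}}$-type expressions. These vanish because $\partial\alpha^{\overline 3}$, $\overline\partial\alpha^{\overline 3}$ involve only closed forms.
\item The terms $\partial\overline{\partial}\alpha^{j\overline{k}}$ with $j,k\le 2$ vanish trivially.
\end{itemize}
Hence
\[
\partial\overline{\partial}\omega=c\,g_{3\overline{3}}\;\partial\alpha^3\wedge\overline{\partial}\alpha^{\overline{3}}\text{-type terms}=c'\,g_{3\overline{3}}\bigl(|A|^2+|D|^2+|E|^2+2\mathfrak{Re}(\overline{B}C)\bigr)\alpha^{12\overline{1}\overline{2}},
\]
with $c'\neq0$ a universal constant. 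I would verify this directly by expanding $d\alpha^3\wedge d\alpha^{\overline 3}$ with $dα^3$ as in the statement. Since $g_{3\overline3}>0$, the SKT condition for one invariant metric is equivalent to the displayed numerical identity, which does not depend on $g$. This gives both the ``all or none'' dichotomy and the characterization in this case.

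The main obstacle is excluding the remaining cases: $d\alpha^2\neq0$, and the non-nilpotent structures. For $d\alpha^2=\varepsilon\alpha^{1\overline1}+\dots\neq0$, I would show that no invariant metric is SKT. I would first try the obstruction of Proposition \ref{Obstructions to the existence of p pluriclosed} with $p=1$, choosing $\alpha$ as a suitable combination of $\alpha^{2\overline2}$, $\alpha^{2\overline{3}}$, $\alpha^{3\overline{2}}$ so that $(\partial\overline{\partial}\alpha)^{2,2}$ is a nonzero semi-definite multiple of $\alpha^{12}\wedge\overline{\alpha^{12}}$. Failing that, I would directly compute the coefficient of $\alpha^{12\overline{1}\overline{2}}$ in $\partial\overline{\partial}\omega$ and show it is a sum of squares with a strictly positive contribution proportional to $g_{3\overline3}|\varepsilon|^2$-type terms. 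For $\mathfrak{h}_{19}^-$ and $\mathfrak{h}_{26}^+$, I would use their explicit families of structure equations and run the same coefficient computation, showing that the relevant component of $\partial\overline\partial\omega$ has a definite sign.
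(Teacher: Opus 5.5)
Your computation in the case $d\alpha^1=d\alpha^2=0$ is correct. All $\partial\overline{\partial}\alpha^{j\overline{k}}$ with $(j,k)\neq(3,3)$ vanish, and $\partial\overline{\partial}\alpha^{3\overline{3}}=\overline{\partial}\alpha^3\wedge\partial\alpha^{\overline{3}}-\partial\alpha^3\wedge\overline{\partial}\alpha^{\overline{3}}=-\bigl(|A|^2+|D|^2+|E|^2+2\mathfrak{Re}(\overline{B}C)\bigr)\alpha^{12\overline{1}\overline{2}}$. This gives the ``if'' direction and the dichotomy inside this class, and it is exactly the mechanism of Lemma \ref{tutte skt} for $n=3$. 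The paper itself does not prove the statement; it quotes it from Fino--Parton--Salamon. The averaging step is unnecessary, since only invariant metrics are involved.

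The gap is the converse (SKT $\Rightarrow$ such a basis exists), on which the global dichotomy also rests. You leave it as an unexecuted plan, and the case split you set up for it is wrong.

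\emph{The case $d\alpha^2\neq0$ does not exclude SKT.} A Salamon basis is far from unique, so $d\alpha^2\neq0$ in one adapted basis does not rule out SKT metrics. Take $d\alpha^1=0$, $d\alpha^2=\alpha^{1\overline{1}}$, $d\alpha^3=0$, a structure on $\mathfrak{h}_8$. Every $\partial\overline{\partial}\alpha^{j\overline{k}}$ is built from $\alpha^{1\overline{1}}\wedge\alpha^{1\overline{1}}=0$, so every invariant metric is SKT. In the reordered basis $(\alpha^1,\alpha^3,\alpha^2)$ the structure has the stated form with only $C\neq0$. For the same reason, the positive ``$|\varepsilon|^2$-type'' contribution you hope for does not exist. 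Your proposed test forms $\alpha^{2\overline{2}},\alpha^{2\overline{3}},\alpha^{3\overline{2}}$ have vanishing $\partial\overline{\partial}$ in the relevant case.

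\emph{Split on an invariant instead.} Split on the dimension of the space of closed $(1,0)$-forms. Suppose it is $1$ and $J$ is nilpotent, so $d\alpha^2=\varepsilon\alpha^{1\overline{1}}$ with $\varepsilon\neq0$.
\begin{enumerate}
\item The identity $d(d\alpha^3)=0$ forces $B=0$. Indeed, $d\alpha^{2\overline{2}}=\varepsilon\alpha^{1\overline{1}\overline{2}}-\overline{\varepsilon}\alpha^{12\overline{1}}$, while the other summands of $d\alpha^3$ are closed.
\item Moreover $(A,D,E)\neq0$, since otherwise $\alpha^3-\frac{C}{\varepsilon}\alpha^2$ would be closed.
\item Then $\partial\overline{\partial}\alpha^3=0$, and the only surviving term is $\partial\overline{\partial}\alpha^{3\overline{3}}=-(|A|^2+|D|^2+|E|^2)\alpha^{12\overline{1}\overline{2}}$. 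So $\partial\overline{\partial}\omega=-\frac{i}{2}g_{3\overline{3}}(|A|^2+|D|^2+|E|^2)\alpha^{12\overline{1}\overline{2}}\neq0$ for every invariant $\omega$.
\end{enumerate}
Equivalently, apply Proposition \ref{Obstructions to the existence of p pluriclosed} with the test form $\alpha^{3\overline{3}}$.

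\emph{The non-nilpotent structures are not treated.} The structures on $\mathfrak{h}_{19}^-$ and $\mathfrak{h}_{26}^+$ are only announced and still need an actual argument. In Ugarte's normal form, $d\alpha^1=0$, $d\alpha^2=\alpha^{13}+\alpha^{1\overline{3}}$, $d\alpha^3=i\delta\alpha^{1\overline{1}}\pm i(\alpha^{1\overline{2}}-\alpha^{2\overline{1}})$ with $\delta\in\{0,1\}$. There one checks that $\partial\alpha^3=0$ and $\partial\overline{\partial}\alpha^3=0$, which gives $\partial\overline{\partial}\alpha^{3\overline{3}}=-2\alpha^{12\overline{1}\overline{2}}$. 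So the same test form again yields the obstruction.
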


The following result shows that for nilmanifolds with structure equations as in \eqref{Nilpotent Lie algebra sezione 3}, either every left-invariant Hermitian metric is SKT or none is. The proof can be deduced from the arguments contained in the proof of \cite[Theorem 7.4]{SferruzzaTomassini23}; however, for the sake of completeness, we provide a simple proof here which uses a similar argument.

\begin{lemma}\label{tutte skt}
    Let $(M \doteq\Gamma \backslash G, J)$ be a nilmanifold of complex dimension $n$ equipped with a left-invariant complex structure $J$ satisfying the structure equations \eqref{Nilpotent Lie algebra sezione 3}. Then, every left-invariant Hermitian metric on $M$ is SKT or none is. 
\end{lemma}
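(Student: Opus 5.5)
Write a general left-invariant Hermitian metric as
\[
\omega = i\sum_{j,k=1}^{n} h_{j\bar k}\,\varphi^{j\bar k},
\]
with $(h_{j\bar k})$ a positive definite Hermitian matrix. Since $\varphi^1,\dots,\varphi^{n-1}$ are closed, only $\varphi^n$ contributes derivatives. We compute $\partial\bar\partial\omega$ term by term, setting $d\varphi^n=\tau$, where $\tau$ is a $2$-form in $\varphi^1,\dots,\varphi^{n-1}$ and their conjugates.

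\textbf{Terms with indices $j,k\le n-1$.} For these, $\varphi^{j\bar k}$ is closed, so they contribute nothing.

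\textbf{Mixed terms.} For $j\le n-1$, $d(\varphi^{j\bar n})=-\varphi^j\wedge\bar\tau$ and $d(\varphi^{n\bar j})=\tau\wedge\varphi^{\bar j}$. The forms $\varphi^j\wedge\bar\tau$ and $\tau\wedge\varphi^{\bar j}$ are $d$-exact by this very formula. Moreover, they are built only from the closed forms $\varphi^1,\dots,\varphi^{n-1}$ and their conjugates, hence closed. It follows that $\partial\bar\partial$ of these terms vanishes. More concretely: $\bar\partial\varphi^{j\bar n}$ is a $3$-form in closed generators, so applying $\partial$ gives zero. The same holds for $\partial\bar\partial\varphi^{n\bar j}$.

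\textbf{Diagonal term.} It remains to compute $\partial\bar\partial\varphi^{n\bar n}$. Here $d(\varphi^{n\bar n})=\tau\wedge\varphi^{\bar n}-\varphi^n\wedge\bar\tau$. Applying $d$ again gives
\[
dd(\varphi^{n\bar n}) = 0 = \tau\wedge\bar\tau - \tau\wedge\bar\tau + \dots,
\]
so we should instead decompose by bidegree. Write $\tau=\tau^{2,0}+\tau^{1,1}$; note that $\tau$ has no $(0,2)$ part by integrability. Then
\[
\bar\partial\varphi^{n\bar n}=\tau^{1,1}\wedge\varphi^{\bar n}-\varphi^n\wedge\overline{\tau^{2,0}},
\]
and applying $\partial$ yields
\[
\partial\bar\partial\varphi^{n\bar n} = \tau^{1,1}\wedge\overline{\tau^{1,1}} \pm \tau^{2,0}\wedge\overline{\tau^{2,0}},
\]
with signs fixed by careful bookkeeping (one should check that $\partial\tau^{1,1}=0$, etc., which holds since all generators involved are closed).

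\textbf{Conclusion.} Hence $\partial\bar\partial\omega = i\,h_{n\bar n}\,\Theta$, where $\Theta$ is a fixed $(2,2)$-form depending only on the complex structure equations. Since $h_{n\bar n}>0$, the metric $\omega$ is SKT if and only if $\Theta=0$. This condition is independent of the metric, which gives the dichotomy.

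The main obstacle is the bookkeeping in the mixed terms: one must make sure that $\partial\bar\partial(\varphi^{j\bar n})$ really vanishes. The key point is that every form appearing after the first derivative involves only closed generators.
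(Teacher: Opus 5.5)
Your proposal is correct and follows the paper's argument. Both proofs reduce $\partial\bar\partial\omega$ to $h_{n\bar n}$ times the metric-independent form $\partial\bar\partial\varphi^{n\bar n}$, which works out to $\tau^{1,1}\wedge\overline{\tau^{1,1}}-\tau^{2,0}\wedge\overline{\tau^{2,0}}$, and then use $h_{n\bar n}>0$; you additionally justify the vanishing of $\partial\bar\partial\varphi^{j\bar k}$ for $(j,k)\neq(n,n)$, which the paper only asserts.
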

\begin{proof}
    Let $\omega$ be the fundamental form of a left-invariant Hermitian metric $g$. We can write
    \begin{equation*}
        \omega = \sum_{j,k=1}^{n} x_{j \overline{k}} \varphi^{j \overline{k}},
    \end{equation*}
    where $x_{j\overline{k}} \in \C$ with $\overline{{x_{k \overline{j}}}} = - x_{j \overline{k}}$. Since $- i x_{n \overline{n}} = \omega(Z_{n}, J \overline{Z}_{n}) > 0$, where $\{Z_{1}, \dots, Z_{n}\}$ is the dual basis to $\{\varphi^{1}, \dots, \varphi^{n}\}$, and $\del\delbar\varphi^{j\overline{k}}=0$ for $(j,k)\neq(n,n)$, it follows that $\del \delbar \omega = x_{n \overline{n}} \del \delbar \varphi^{n \overline{n}}$. 
    Therefore, a left-invariant Hermitian metric $g$ on $M$ is SKT if and only if $\del \delbar \varphi^{n \overline{n}} = 0$. Since $g$ is arbitrary, the thesis follows.
\end{proof} 
In the same hypotheses, we are able to prove the following result.  
\begin{theorem}\label{Tutte FT}
    Let $(M \doteq\Gamma \backslash G, J)$ be a nilmanifold of complex dimension $n$ equipped with a left-invariant complex structure $J$ that satisfies \eqref{Nilpotent Lie algebra sezione 3}. If $(M,J)$ admits a left-invariant SKT metric, then every left-invariant metric satisfies condition \eqref{FT metrics}. 
\end{theorem}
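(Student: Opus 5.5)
By Lemma \ref{tutte skt}, the existence of one left-invariant SKT metric implies that every left-invariant Hermitian metric is SKT, and that $\del\delbar\varphi^{n\overline{n}}=0$. Fix an arbitrary left-invariant Hermitian metric with fundamental form $\omega$. Then $\del\delbar\omega=0$, and
\[
\del\delbar\omega^{2}=2\big(\del\delbar\omega\wedge\omega-\delbar\omega\wedge\del\omega\big)=-2\,\delbar\omega\wedge\del\omega ,
\]
so it is enough to show that $\delbar\omega\wedge\del\omega=0$.

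First, normalize the coframe. Replace $\varphi^n$ by $\varphi'^{n}=\varphi^n+\sum_{j<n}c_j\varphi^j$ with constants $c_j$. Since $d\varphi^j=0$ for $j<n$, we get $d\varphi'^n=d\varphi^n$, so \eqref{Nilpotent Lie algebra sezione 3} still holds and $\del\delbar\varphi'^{n}\wedge\varphi'^{\overline{n}}=\del\delbar\varphi^{n\overline{n}}$ is unchanged. Choose the $c_j$ by Gram--Schmidt so that $\varphi'^n$ is $g$-orthogonal to $\varphi^1,\dots,\varphi^{n-1}$. Dropping primes, we may then write
\[
\omega=\omega_0+i a\,\varphi^{n\overline{n}}, \qquad a>0,
\]
where $\omega_0\in\Lambda^{1,1}\langle\varphi^1,\dots,\varphi^{n-1},\varphi^{\overline 1},\dots,\varphi^{\overline{n-1}}\rangle$ is closed.

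Next, set $\theta\doteq d\varphi^n=\theta^{2,0}+\theta^{1,1}$, a form living in the base variables. Then $\del\varphi^n=\theta^{2,0}$, $\delbar\varphi^n=\theta^{1,1}$, $\del\varphi^{\overline n}=\overline{\theta^{1,1}}$ and $\delbar\varphi^{\overline n}=\overline{\theta^{2,0}}$. This gives
\[
\del\omega=ia\big(\theta^{2,0}\wedge\varphi^{\overline n}-\varphi^n\wedge\overline{\theta^{1,1}}\big),\qquad
\delbar\omega=ia\big(\theta^{1,1}\wedge\varphi^{\overline n}-\varphi^n\wedge\overline{\theta^{2,0}}\big).
\]
In the product $\delbar\omega\wedge\del\omega$, the terms containing $\varphi^{\overline n}\wedge\varphi^{\overline n}$ or $\varphi^n\wedge\varphi^n$ vanish. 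What remains is, up to an overall sign and the factor $a^2$, the form
\[
\varphi^{n\overline n}\wedge\big(\theta^{1,1}\wedge\overline{\theta^{1,1}}-\theta^{2,0}\wedge\overline{\theta^{2,0}}\big).
\]

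On the other hand, using $\del\delbar\varphi^n=0$ (which holds since $\theta$ is closed and involves only closed base forms), a direct computation gives
\[
\del\delbar\varphi^{n\overline n}=\pm\big(\theta^{1,1}\wedge\overline{\theta^{1,1}}-\theta^{2,0}\wedge\overline{\theta^{2,0}}\big).
\]
By the SKT hypothesis this vanishes, so $\delbar\omega\wedge\del\omega=0$ and therefore $\del\delbar\omega^2=0$.

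The main obstacle is bookkeeping the signs. One has to check that the combination of $\theta^{1,1}\wedge\overline{\theta^{1,1}}$ and $\theta^{2,0}\wedge\overline{\theta^{2,0}}$ appearing in $\delbar\omega\wedge\del\omega$ is exactly proportional to the one in $\del\delbar\varphi^{n\overline n}$, and not some other linear combination of the two. The normalization of the coframe is the step that makes this comparison clean: it removes the cross terms $x_{j\overline n}$ and $x_{n\overline k}$.
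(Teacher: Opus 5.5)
Your proof is correct, but it takes a different route from the paper's.

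The paper makes no change of coframe and does not use the product identity for $\del\delbar\omega^{2}$. It expands $\frac{1}{2}\omega^{2}$ in the forms $\varphi^{j\overline{r}k\overline{s}}$ for an arbitrary Hermitian matrix $(x_{j\overline{k}})$ and isolates the part $\varphi^{n\overline{n}}\wedge(\sum_{j,r<n}a_{j\overline{r}}\varphi^{j\overline{r}})$. It then concludes summand by summand from $\del\delbar\varphi^{j\overline{k}}=0$ for all $(j,k)$. This implicitly uses that the remaining factors are closed base forms, so wedging with them preserves $\del\delbar$-closedness.

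You instead fix the metric and normalize the coframe by Gram--Schmidt. This is legitimate: $d\varphi'^{n}=d\varphi^{n}$, and the cross terms in $\del\delbar\varphi'^{n\overline{n}}$ vanish because $\theta^{2,0}$ and $\theta^{1,1}$ are closed. You then establish the explicit identity $\delbar\omega\wedge\del\omega=-a^{2}\,\varphi^{n\overline{n}}\wedge\del\delbar\varphi^{n\overline{n}}$.

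The sign bookkeeping you flag does work out. On one side, $\del\delbar\varphi^{n\overline{n}}=\theta^{1,1}\wedge\overline{\theta^{1,1}}-\theta^{2,0}\wedge\overline{\theta^{2,0}}$. On the other, the two surviving terms of $\delbar\omega\wedge\del\omega$ are $-a^{2}\varphi^{n\overline{n}}\wedge\theta^{1,1}\wedge\overline{\theta^{1,1}}$ and $+a^{2}\varphi^{n\overline{n}}\wedge\theta^{2,0}\wedge\overline{\theta^{2,0}}$.

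What the normalization buys is a closed formula instead of a term-by-term check, and you could exploit it even more directly. Since $(\varphi^{n\overline{n}})^{2}=0$, one has $\omega^{k}=\omega_{0}^{k}+k\,ia\,\omega_{0}^{k-1}\wedge\varphi^{n\overline{n}}$. Since $\omega_{0}$ is closed of even degree, this gives $\del\delbar\omega^{k}=k\,\omega_{0}^{k-1}\wedge\del\delbar\omega$ for every $k\geq1$. That yields the theorem, and all higher powers, without computing $\delbar\omega\wedge\del\omega$ at all.

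The paper's version, in turn, avoids a metric-dependent change of coframe and treats all left-invariant metrics in the fixed coframe at once.
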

\begin{proof}
    As shown in the proof of Lemma \ref{tutte skt}, the existence of a left-invariant SKT metric implies that $\del \delbar \varphi^{n \overline{n}} = 0 $. If we denote by $\omega$ the fundamental form of a left-invariant Hermitian metric $g$, then
    \begin{equation*}
        \frac{1}{2}\omega^{2} = \sum_{j < k, r < s} x_{j \overline{r}} x_{k \overline{s}} \varphi^{j \overline{r} k \overline{s}} = \sum_{\substack{j < k,  r < s,\\ (k,\overline{s})\neq(n,\overline{n})}} x_{j \overline{r}} x_{k \overline{s}} \varphi^{j \overline{r} k \overline{s}} + \varphi^{n \overline{n}}\wedge \Big(\sum_{j=1, r = 1}^{n -1} a_{j \overline{r}} \varphi^{j \overline{r}} \Big),
    \end{equation*}
    where $a_{j \overline{r}}\in\C$. Since $\del\delbar\varphi^{j\overline{k}}=0$ for all $(j,k)$, then $\del \delbar \omega^{2} = 0$ and so, as observed in the preliminaries, $\del \delbar \omega^{l} = 0$, for all $ l=1, \dots, n$. 
\end{proof}

\subsection{Deformations of metrics satisfying condition $\del \delbar \omega = 0, \del \delbar \omega^{2} = 0$}\label{Subsection deformation}

Let $\{M_{t}\}_{t \in I}$, where $I \doteq (- \varepsilon, \varepsilon)$ for $\varepsilon > 0$, be a curve of deformations of a compact complex manifold $(M,J)$ admitting a Hermitian metric $g$ such that $\del \delbar \omega^{k} = 0$ for a certain $k \geq 1$, where $\omega$ denotes the fundamental form of $g$. Suppose that $\{M_{t}\}_{t \in I}$ is parametrized by $\varphi(t) \in \mathcal{A}^{0,1}(T^{1,0}M)$. By Lemma \ref{Isomorfismo tra lo spazio delle forme}, a smooth family of Hermitian metrics $\{\omega_{t}\}_{t \in I}$ along $\{M_{t}\}_{t \in I}$ such that $\omega_{0} = \omega$ can be locally written as $\omega_{t} = e^{\iota_{\varphi(t)}|\iota_{\overline{\varphi(t)}}} \omega(t)$, where $\omega(t) = \omega_{ij}(t) dz^{i} \wedge d\overline{z}^{j} \in \mathcal{A}^{1,1}(M)$. Moreover, 
\begin{equation}\label{potenza di omega al tempo t}
    \omega^{k}_{t} = e^{\iota_{\varphi(t)}|\iota_{\overline{\varphi(t)}}} \big( \omega^{k}(t) \big),
\end{equation}
where, locally 
\begin{equation*}
    \omega^{k}(t) = \sigma(t)_{i_{1}j_{1} \dots i_{k}j_{k}}dz^{i_{1}} \wedge d\overline{z}^{j_{1}} \wedge \dots \wedge dz^{i_{k}} \wedge d\overline{z}^{j_{k}},
\end{equation*}
and $\sigma(t)_{i_{1}j_{1} \dots i_{k}j_{k}} \doteq \omega_{i_{1}j_{1}}(t) \cdot \dots \cdot \omega_{i_{k}j_{k}}(t)$. For later purposes, we set 
\begin{equation*}
    (\omega^{k}(t))' \doteq \frac{\del}{\del t} (\sigma(t)_{i_{1}j_{1} \dots i_{k}j_{k}}) dz^{i_{1}} \wedge d\overline{z}^{j_{1}} \wedge \dots \wedge dz^{i_{k}} \wedge d\overline{z}^{j_{k}} \; \in \; \mathcal{A}^{k,k}(M).
\end{equation*}

We recall that in \cites{PiovaniSferruzza21, Sferruzza23} the authors developed techniques to provide necessary conditions for the existence of a smooth family of SKT and astheno-K\"ahler Hermitian metrics along $\{M_{t}\}_{t \in I}$. Moreover, in \cite{Ciulica25}, the author provides necessary conditions for the existence of a smooth family of Hermitian metrics $\{\omega_{t}\}_{t \in I}$ along $\{M_{t}\}_{t \in I}$ such that $\del_{t} \delbar_{t} \omega^{k}_{t} = 0$ and $\omega_{0} = \omega$. Such conditions are recalled in the following.

\begin{theorem}\label{Obstructions to the existence of curves}\cite[Theorem 3.21]{Ciulica25}
    Let $\{M_{t}\}_{t \in I}$ be a curve of deformations of a compact complex manifold $(M,J)$ admitting a Hermitian metric $g$ such that its fundamental form satisfies $\del \delbar \omega^{k} = 0$ for a certain $k \geq 1$. Suppose that the deformation is parametrized by $\varphi(t) \in \mathcal{A}^{0,1}(T^{1,0}M)$ and that $\{\omega_{t}\}_{t \in I}$ is a smooth family of Hermitian metric written as in \eqref{potenza di omega al tempo t} such that $\del_{t} \delbar_{t} \omega^{k}_{t} = 0$, $\forall t \in I$. Then 
    \begin{equation*}
        2 i \mathfrak{Im} (\del \circ \iota_{\varphi(0)'} \circ \del) (\omega^{k}) = \del \delbar(\omega^{k}(0))'.
    \end{equation*}
\end{theorem}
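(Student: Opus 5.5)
The plan is to pull the equation $\del_{t}\delbar_{t}\omega^{k}_{t}=0$ back to the central fibre through the isomorphism of Lemma \ref{Isomorfismo tra lo spazio delle forme}, and then differentiate it at $t=0$. By \eqref{potenza di omega al tempo t} we have $\omega^{k}_{t}=e^{\iota_{\varphi}|\iota_{\overline{\varphi}}}(\omega^{k}(t))$. First apply \eqref{delbar operator on M_t} to $\alpha=\omega^{k}(t)$. This writes $\delbar_{t}\omega^{k}_{t}$ as $e^{\iota_{\varphi}|\iota_{\overline{\varphi}}}(\beta(t))$, with
\[
\beta(t)=(I-\overline{\varphi}\varphi)^{-1}\Finv\big([\del,\iota_{\varphi}]+\delbar\big)(I-\overline{\varphi}\varphi)\Finv\,\omega^{k}(t).
\]
Then apply \eqref{del operator on M_t} to $\beta(t)$. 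Since $e^{\iota_{\varphi}|\iota_{\overline{\varphi}}}$ is injective for small $t$, the hypothesis becomes, for every $t\in I$,
\[
(I-\varphi\overline{\varphi})^{-1}\Finv\big([\delbar,\iota_{\overline{\varphi}}]+\del\big)(I-\varphi\overline{\varphi})\Finv\,\beta(t)=0 .
\]

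Next we expand to first order in $t$. Since $\varphi(0)=0$, the operators $\varphi\overline{\varphi}$ and $\overline{\varphi}\varphi$ are quadratic in $\varphi$. Their $t$-derivatives therefore vanish at $t=0$, and the factors $(I-\cdot)^{\pm1}\Finv$ contribute nothing at order zero or one. At $t=0$ the identity reduces to $\del\delbar\omega^{k}=0$, which holds by hypothesis. Differentiating once at $t=0$, by the Leibniz rule, gives
\[
\del\delbar(\omega^{k}(0))'+\del[\del,\iota_{\varphi(0)'}]\omega^{k}+[\delbar,\iota_{\overline{\varphi(0)'}}]\delbar\omega^{k}=0 .
\]
The contraction $\iota_{\phi}$ has even total degree, since it maps $(p,q)$-forms to $(p-1,q+1)$-forms. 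Hence $[\del,\iota_{\phi}]=\del\iota_{\phi}-\iota_{\phi}\del$, and together with $\del^{2}=\delbar^{2}=0$ the two middle terms simplify:
\[
\del[\del,\iota_{\varphi'}]\omega^{k}=-\del\iota_{\varphi'}\del\omega^{k},\qquad [\delbar,\iota_{\overline{\varphi'}}]\delbar\omega^{k}=\delbar\iota_{\overline{\varphi'}}\delbar\omega^{k}.
\]

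Finally, $\omega^{k}$ is a real form, so $\delbar\iota_{\overline{\varphi'}}\delbar\omega^{k}=\overline{\del\iota_{\varphi'}\del\omega^{k}}$. The relation then reads
\[
\del\delbar(\omega^{k}(0))'=\del\iota_{\varphi'}\del\omega^{k}-\overline{\del\iota_{\varphi'}\del\omega^{k}}=2i\,\mathfrak{Im}(\del\circ\iota_{\varphi(0)'}\circ\del)(\omega^{k}),
\]
which is the claim.

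The main obstacle is bookkeeping rather than any conceptual step. One must check that the simultaneous-contraction factors really drop out at first order. One must also get the signs right in the commutators $[\del,\iota_{\varphi}]$ and $[\delbar,\iota_{\overline{\varphi}}]$ under the conventions of \cites{RaoZhao15,RaoZhao18_2}. A sign slip there would turn the imaginary part into a real part, so I would verify the degree convention for $\iota_{\phi}$ carefully before combining the terms.
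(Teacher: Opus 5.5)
Your proof is correct. You pull $\del_t\delbar_t\omega^k_t=0$ back to $M$ through \eqref{delbar operator on M_t} and \eqref{del operator on M_t}, note that all the $(I-\varphi\overline{\varphi})^{\pm1}\Finv$ and $(I-\overline{\varphi}\varphi)^{\pm1}\Finv$ factors are $I+O(t^2)$, and differentiate at $t=0$. This gives $\del\delbar(\omega^k(0))'=\del\iota_{\varphi(0)'}\del\omega^k-\delbar\iota_{\overline{\varphi(0)'}}\delbar\omega^k$, and your sign and reality checks are right.

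The paper gives no proof of this statement and only quotes it from \cite{Ciulica25}. There it is obtained by essentially this first-order expansion of the Rao--Zhao formulas, as in the SKT and astheno-K\"ahler cases of \cites{PiovaniSferruzza21,Sferruzza23}.
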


A direct consequence of Theorem \ref{Obstructions to the existence of curves} is the following.

\begin{corollary}\cite[Corollary 3.22]{Ciulica25}
    Let $\{M_{t}\}_{t \in I}$ be a curve of deformations of a compact complex manifold $(M,J)$ admitting a Hermitian metric $g$ such that its fundamental form satisfies $\del \delbar \omega^{k} = 0$. Suppose that the deformation is parametrized by $\varphi(t) \in \mathcal{A}^{0,1}(T^{1,0}M)$ and that $\{\omega_{t}\}_{t \in I}$ is a smooth family of Hermitian metric written as in \eqref{potenza di omega al tempo t} such that $\del_{t} \delbar_{t} \omega^{k}_{t} = 0$, $\forall t \in I$. Then 
    \begin{equation*}
        [\mathfrak{Im}(\del \circ \iota_{\varphi(0)'} \circ \del) (\omega^{k})]_{H_{BC}^{k+1,k+1}(M)} = 0.
    \end{equation*}
\end{corollary}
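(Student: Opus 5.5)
The Corollary follows from Theorem \ref{Obstructions to the existence of curves} by reading the identity there as a statement in Bott--Chern cohomology. Recall that
\[
H^{k+1,k+1}_{BC}(M)=\frac{\ker\del\cap\ker\delbar\cap\mathcal{A}^{k+1,k+1}(M)}{\operatorname{Im}\del\delbar}.
\]
So two things have to be checked. First, the form $\beta\doteq\mathfrak{Im}(\del\circ\iota_{\varphi(0)'}\circ\del)(\omega^{k})$ defines a class at all, i.e.\ it is a $d$-closed form of bidegree $(k+1,k+1)$. Second, it lies in $\operatorname{Im}\del\delbar$.

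\emph{Exactness.} Under the stated hypotheses, Theorem \ref{Obstructions to the existence of curves} gives
\[
2i\,\beta=\del\delbar(\omega^{k}(0))',
\]
where $(\omega^{k}(0))'\in\mathcal{A}^{k,k}(M)$ is a global form. It is well defined because it is the $t$-derivative at $t=0$ of $\omega^{k}(t)$, which is a global form by Lemma \ref{Isomorfismo tra lo spazio delle forme}. Hence
\[
\beta=\del\delbar\Big(\tfrac{1}{2i}(\omega^{k}(0))'\Big)\in\operatorname{Im}\del\delbar .
\]

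\emph{Closedness and bidegree.} A $\del\delbar$-exact form is automatically $\del$- and $\delbar$-closed, since $\del^{2}=\delbar^{2}=0$ and $\del\delbar=-\delbar\del$. So $\beta$ is $d$-closed. Its bidegree can be read off the same identity: the right-hand side has type $(k+1,k+1)$. Directly, $\del\omega^{k}$ has type $(k+1,k)$; contraction with $\varphi(0)'\in\mathcal{A}^{0,1}(T^{1,0}M)$ lowers $p$ by one and raises $q$ by one, giving type $(k,k+1)$; applying $\del$ gives type $(k+1,k+1)$. The imaginary part preserves this type because $(k+1,k+1)$ is self-conjugate.

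Therefore $\beta$ is a $d$-closed $(k+1,k+1)$-form lying in $\operatorname{Im}\del\delbar$, and $[\beta]_{H^{k+1,k+1}_{BC}(M)}=0$. There is no substantive obstacle: all the analytic content sits in Theorem \ref{Obstructions to the existence of curves}, and the only care needed is that $(\omega^{k}(0))'$ is a globally defined smooth form, so that $\beta$ is exact rather than merely locally exact.
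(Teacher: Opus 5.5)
Your proposal is correct and follows the paper's route: the paper presents this corollary as a direct consequence of Theorem \ref{Obstructions to the existence of curves}, whose identity exhibits $\mathfrak{Im}(\del\circ\iota_{\varphi(0)'}\circ\del)(\omega^{k})$ as $\del\delbar$ of the $(k,k)$-form $\frac{1}{2i}(\omega^{k}(0))'$, so it is $\del$- and $\delbar$-closed with vanishing Bott--Chern class. Your remark that $(\omega^{k}(0))'$ is globally defined, being the $t$-derivative of the global forms $\omega^{k}(t)$, is what makes this exactness global rather than merely local.
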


These results also apply to Hermitian metrics that satisfy the condition \eqref{FT metrics}.

\begin{corollary}\label{Vanishing of Bott Chern cohomology class}\cite[Corollary 3.24]{Ciulica25}
    Let $\{M_{t}\}_{t \in I}$ be a curve of deformations of a compact complex manifold $(M,J)$ admitting a Hermitian metric $g$ such that its fundamental form satisfies $\del \delbar \omega = 0$, $\del \delbar \omega^{2} = 0$. Suppose that the deformation is parametrized by $\varphi(t) \in \mathcal{A}^{0,1}(T^{1,0}M)$ and that $\{\omega_{t}\}_{t \in I}$ is a smooth family of Hermitian metric written as in \eqref{potenza di omega al tempo t} such that $\del_{t} \delbar_{t} \omega_{t} = 0$, $\del_{t} \delbar_{t} \omega^{2}_{t} = 0$, $\forall t \in I$. Then 
    \begin{equation*}
        [\mathfrak{Im}(\del \circ \iota_{\varphi(0)'} \circ \del) (\omega^{k})]_{H_{BC}^{k+1,k+1}(M)} = 0, \quad  \forall k \geq 1.
    \end{equation*}
\end{corollary}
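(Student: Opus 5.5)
The plan is to reduce the statement to Theorem \ref{Obstructions to the existence of curves}, applied once for each exponent $k\ge 1$, and then pass to Bott--Chern cohomology.

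First I would note that the hypotheses on the central fibre and on the family propagate to every power of the metric. The preliminaries show that $\del\delbar\omega=0$ and $\del\delbar\omega^{2}=0$ imply $\del\delbar\omega^{k}=0$ for all $k\ge 1$. That argument is purely pointwise and algebraic, so it applies verbatim on each fibre $M_t$ with $\del_t,\delbar_t$ and $\omega_t$:
\begin{equation*}
\del_t\delbar_t\omega_t^{k}=k\big(\del_t\delbar_t\omega_t\wedge\omega_t-(k-1)\,\delbar_t\omega_t\wedge\del_t\omega_t\big)\wedge\omega_t^{k-2}.
\end{equation*}
Here $\delbar_t\omega_t\wedge\del_t\omega_t$ vanishes because $\del_t\delbar_t\omega_t^{2}=2\big(\del_t\delbar_t\omega_t\wedge\omega_t-\delbar_t\omega_t\wedge\del_t\omega_t\big)$, and $\del_t\delbar_t\omega_t=0$. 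Hence $\del_t\delbar_t\omega_t^{k}=0$ for every $t\in I$ and every $k$. By \eqref{potenza di omega al tempo t}, $\omega_t^{k}=e^{\iota_{\varphi(t)}|\iota_{\overline{\varphi(t)}}}(\omega^{k}(t))$, so the family $\{\omega_t\}$ is written in exactly the form required by Theorem \ref{Obstructions to the existence of curves} for that fixed $k$.

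Next, I would apply Theorem \ref{Obstructions to the existence of curves} for each fixed $k$. It gives
\begin{equation*}
2i\,\mathfrak{Im}(\del\circ\iota_{\varphi(0)'}\circ\del)(\omega^{k})=\del\delbar(\omega^{k}(0))'.
\end{equation*}
The right-hand side is $\del\delbar$-exact, with $(\omega^{k}(0))'\in\mathcal{A}^{k,k}(M)$. It remains to check that the left-hand side defines a Bott--Chern class, i.e. that it is $d$-closed. But this form equals $\del\delbar$ of something, so it is automatically $\del$- and $\delbar$-closed. Therefore its class in $H^{k+1,k+1}_{BC}(M)$ is well defined and zero, and the factor $2i$ does not affect this. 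This is equally the content of the previous Corollary applied for each $k$.

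The main point requiring care is the first step: confirming that the identity reducing $\del\delbar\omega^{k}$ to the two conditions holds on each deformed fibre $M_t$. Concretely, one must check that $\omega_t^{k}$, computed in $M_t$, agrees with the image of $\omega^{k}(t)$ under the isomorphism of Lemma \ref{Isomorfismo tra lo spazio delle forme}. I would verify this using multiplicativity of $e^{\iota_{\varphi}|\iota_{\overline{\varphi}}}$ on wedge products of $(1,1)$-forms, which follows from its definition via the simultaneous contraction $(I+\varphi+\overline{\varphi})\Finv$ acting factorwise.
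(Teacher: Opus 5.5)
Your proof is correct and follows the route the paper takes implicitly. The paper gives no proof of its own: it cites \cite[Corollary 3.24]{Ciulica25} and remarks that the earlier results also apply to metrics satisfying \eqref{FT metrics}, relying on the identity from the preliminaries. That identity forces $\del_t\delbar_t\omega_t^{k}=0$ on every fibre, so Theorem~\ref{Obstructions to the existence of curves} (equivalently, the preceding corollary) applies for each $k$. The resulting form is $\del\delbar$-exact and hence zero in $H^{k+1,k+1}_{BC}(M)$. Your extra check that $e^{\iota_{\varphi}|\iota_{\overline{\varphi}}}$ is multiplicative is harmless, since the paper takes \eqref{potenza di omega al tempo t} as part of the setup.
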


As an application of the above results, we provide examples of deformations that preserve condition \eqref{FT metrics}.

\begin{example}\label{Condition 1 preservata}
The $2$-step nilpotent Lie algebra $\mathfrak{g}$ studied in \cite[Theorem 2.7]{FinoTomassini11}, can be endowed with an integrable complex structure $J$ such that the fundamental form of the diagonal Hermitian metric satisfies the condition \eqref{FT metrics}. With this choice of $J$, we have that $(\mathfrak{g}^{1,0})^{\ast}$ is spanned by $\{\varphi^{1}, \dots, \varphi^{4}\}$, with structure equations
\begin{equation}\label{astheno + SKT}
    \begin{cases}
            d \varphi^{j}=0, \quad  j= 1, 2, 3, \\[3pt]
            d \varphi^{4} = a_{2} \, \varphi^{13} + a_{3} \, \varphi^{1 \overline{1}} + a_{5} \, \varphi^{1 \overline{3}} + a_{10} \, \varphi^{3 \overline{1}} + a_{12} \, \varphi^{3 \overline{3}},
        \end{cases}
\end{equation}
where $a_{j} \in \Q[i]$ for $j=2,3,5,10,12$ satisfy 
\begin{equation}\label{Condition on the coefficient for astheno + SKT}
    |a_{2}|^{2} + |a_{5}|^{2} + |a_{10}|^{2} = 2 \mathfrak{Re} (a_{3}\overline{a_{12}}).
\end{equation}
Let $G$ be the simply connected nilpotent Lie group associated to the Lie algebra $\mathfrak{g}$. By Malcev's Theorem (see \cite{Malcev62}), the existence of a lattice $\Gamma$ in $G$ is ensured by the fact that $a_{j} \in \Q[i]$ for $j=2,3,5,10,12$. Moreover, condition \eqref{Condition on the coefficient for astheno + SKT} must hold if we want the diagonal Hermitian metric to be both SKT and astheno-K\"ahler. The nilmanifold equipped with the left-invariant complex structure $J$ defined by \eqref{astheno + SKT} will be denoted by $(M \doteq \Gamma \backslash G, J)$.

Let us consider the deformation parametrized by the $(0,1)$-vector form studied in \cite[Example 1]{PiovaniSferruzza21}, which is  
\begin{equation*}
    \Psi(r,s) \doteq r \varphi^{\overline{1}} \otimes Z_{1} + s \varphi^{\overline{3}} \otimes Z_{3}, 
\end{equation*}
where $r, s \in \C^{2}$ satisfy $|r|, |s| < 1$ and $Z_{j}$ are dual to $\varphi^{j}$. By Lemma \ref{Isomorfismo tra lo spazio delle forme}, a coframe for $(T^{1,0}M_{t})^{\ast}$ is given by 

\begin{equation*}
    \begin{cases}
        \varphi^{1}_{r,s} = \varphi^{1} + r \varphi^{\overline{1}}, \\[3pt]
        \varphi^{2}_{r,s} = \varphi^{2}, \\[3pt]
        \varphi^{3}_{r,s} = \varphi^{3} + s \varphi^{\overline{3}},  \\[3pt]
        \varphi^{4}_{r,s} = \varphi^{4}.
    \end{cases}
\end{equation*}
As shown in \cite[Example 1]{PiovaniSferruzza21}, the structure equations of $\{\varphi^{1}_{r,s}, \dots, \varphi^{4}_{r,s}\}$ are given by 
\begin{equation*}
    \begin{cases}
        d \varphi^{j}_{r,s} = 0, \quad j = 1,2,3, \\[3pt]
        d\varphi^{4}_{r,s} =\frac{(a_{2} + \overline{r} a_{10} - \overline{s}a_{5} )}{(1-|r|^{2})(1-|s|^{2})} \varphi^{13}_{r,s} + \frac{a_{3}}{(1-|r|^{2})} \varphi^{1 \overline{1}}_{r,s} + \frac{(a_{5} - sa_{2} - \overline{r}s a_{10})}{(1-|r|^{2})(1-|s|^{2})} \varphi^{1 \overline{3}}_{r,s} \\[3pt]
        \quad \quad \quad \;+ \frac{(a_{10} + r a_{2} - r \overline{s}a_{5})}{(1-|r|^{2})(1-|s|^{2})} \varphi^{3\overline{1}}_{r,s} + \frac{a_{12}}{(1-|s|^{2})} \varphi^{3 \overline{3}}_{r,s} + \frac{(- r a_{5}  + s a_{10} + rs a_{2})}{(1-|r|^{2})(1-|s|^{2})} \varphi^{\overline{13}}_{r,s},
    \end{cases}
\end{equation*}
and the integrability of the complex structure is guaranteed if 
\begin{equation}\label{integrability conditions}
    - r a_{5}  + s a_{10} + rs a_{2} = 0.
\end{equation}
Assume $a_{5}, a_{10} \neq 0$. For $\delta, \delta' > 0$ sufficiently small, the set of solutions of \eqref{integrability conditions} is given by 
\begin{equation*}
    A = \{(r,s) \in \C^{2} \; | \; r = \frac{sa_{10}}{a_{5} - s a_{2}}, |r| < \delta, |s| < \delta'\}.
\end{equation*}
Consider the curve 
\begin{equation*}
\gamma : (- \varepsilon, \varepsilon) \to A , \quad t \mapsto \left(\frac{t u a_{10}}{a_{5} - t u a_{2}}, tu\right),
\end{equation*}
where $u \in \C$. Thus, the curve of deformations 
\begin{equation}\label{curva di deformazioni primo esempio}
    \Psi(t) = \frac{t u a_{10}}{a_{5} - t u a_{2}} \varphi^{\overline{1}} \otimes Z_{1} + tu \varphi^{\overline{3}} \otimes Z_{3}
\end{equation}
is such that $\Psi(0)' = \frac{u a_{10}}{a_{5}} \varphi^{\overline{1}} \otimes Z_{1} + u \varphi^{\overline{3}} \otimes Z_{3}$. Denote by $\omega$ the fundamental form of the diagonal Hermitian metric. Since
\begin{equation*}
    \del \circ \iota_{\Psi(0)'} \circ \del (\omega) = i u a_{2} \frac{|a_{10}|^{2} - |a_{5}|^{2}}{a_{5}} \eta^{13 \overline{13}}, \quad \del \circ \iota_{\Psi(0)'} \circ \del (\omega^{2}) = u a_{2} \frac{|a_{10}|^{2} - |a_{5}|^{2}}{a_{5}} \eta^{123 \overline{123}}
\end{equation*}
and both $\eta^{13 \overline{13}}$ and $\eta^{123 \overline{123}}$ are harmonic with respect to the Bott-Chern Laplacian, we obtain, by Corollary \ref{Vanishing of Bott Chern cohomology class}, that if 
\begin{equation*}
    u a_{2} (|a_{10}|^{2} - |a_{5}|^{2}) \neq 0,
\end{equation*}
there are no curves of metrics $\{\omega_{t}\}$ such that \eqref{FT metrics} holds and $\omega_{0} = \omega$. 

In the following, we want to provide conditions on the coefficients $a_{2}, a_{5}, a_{10}$ in order to preserve the condition \eqref{FT metrics} along the deformation \eqref{curva di deformazioni primo esempio}. We consider the case $a_{2} = 0 $.

By direct computation, the structure equations of $\{\varphi^{1}_{r,s}, \dots, \varphi^{4}_{r,s}\}$ are:
\begin{equation*}
    \begin{cases}
        d \varphi^{j}_{t} = 0, \quad j = 1,2,3, \\[3pt]
        d\varphi^{4}_{t} = A_{t} \varphi^{13}_{t} + B_{t} \varphi^{1 \overline{1}}_{t} + C_{t} \varphi^{1 \overline{3}}_{t} + D_{t} \varphi^{3\overline{1}}_{t} + E_{t} \varphi^{3 \overline{3}}_{t},
    \end{cases}
\end{equation*}
where 
\begin{align*}
    & A_{t} \doteq \frac{t\overline{u} a_{5} (|a_{10}|^{2} - |a_{5}|^{2})}{(|a_{5}|^{2} - |tu|^{2} |a_{10}|^{2}) (1 - |tu|^{2})}, \quad && B_{t} \doteq \frac{a_{3} |a_{5}|^{2}}{|a_{5}|^{2} - |tu|^{2} |a_{10}|^{2}}, \quad C_{t} \doteq \frac{a_{5}(|a_{5}|^{2} - |tu|^{2} |a_{10}|^{2})}{(|a_{5}|^{2} - |tu|^{2} |a_{10}|^{2}) (1 - |tu|^{2})}, \\
    & D_{t} \doteq \frac{a_{10}|a_{5}|^{2}}{(|a_{5}|^{2} - |tu|^{2} |a_{10}|^{2})}, \quad && E_{t} \doteq \frac{a_{12}}{1 - |tu|^{2}}.
\end{align*}
It is straightforward to check that 
\begin{equation*}
    \del_{t} \delbar_{t} \varphi^{4 \overline{4}}_{t} = 2 S^{2} |tu|^{2} |a_{5}|^{2} (|a_{10}|^{2} - |a_{5}|^{2})^{2} \varphi_{t}^{1 \overline{1} 3 \overline{3}},
\end{equation*}
where $S = \frac{1}{(|a_{5}|^{2} - |tu|^{2} |a_{10}|^{2}) (1 - |tu|^{2})}$. Thus, if $|a_{10}|^{2} = |a_{5}|^{2}$, we conclude that $\del_{t} \delbar_{t} \varphi^{4 \overline{4}}_{t} = 0$ and so, by Theorem \ref{Tutte FT}, every left-invariant metric along small deformation of $(M,J)$ satisfies condition \eqref{FT metrics}.
\end{example}

\section{Deformations of special Hermitian metrics}\label{section 4}

\subsection{Deformations of the Iwasawa manifold}\label{subsection Iwasawa}

We now study the SKT condition on deformations of the Iwasawa manifold. We recall that the Iwasawa manifold is defined as the compact quotient ${\mathbb{I}}_3= \Gamma\backslash\mathbb{H}(3;\C)$, where $\mathbb{H}(3;\C)$ is the Heisenberg group over $\C$ defined by
\[
\mathbb{H}(3;\C) := \left\{
\left(
\begin{array}{ccc}
 1 & z_1 & z_3 \\
 0 &  1  & z_2 \\
 0 &  0  &  1
\end{array}
\right) \in \mathrm{GL}(3;\C)\;:\; z_1,\,z_2,\,z_3 \in\C \right\},
\;
\]
with the product induced by matrix multiplication and $ \Gamma = \mathbb{H}(3;\Z[i])\doteq\mathbb{H}(3;\C)\cap\mathrm{GL}(3;\Z[i])$ acts on the Heisenberg group by left multiplication. We have that 
\[
\begin{cases}
    \varphi^1=dz_1, \\[3pt]
    \varphi^2=dz_2, \\[3pt]
    \varphi^3=dz_3-z_1dz_2,
\end{cases}
\]
is a left-invariant coframe for the space of $(1,0)$-forms on $\mathbb{H}(3;\C)$ and the structure equations with respect to this coframe are 
\[
\begin{cases}
    d\varphi^1=0, \\[3pt]
    d\varphi^2=0, \\[3pt]
    d\varphi^3=-\varphi^1\wedge\varphi^2.
\end{cases}
\]
Since the forms $\varphi^1$, $\varphi^2$ and $\varphi^3$ are $\mathbb{H}(3;\C)$-left-invariant, they define a coframe also for the Iwasawa manifold. In \cite{Nakamura75}, I. Nakamura computed the small deformations of
the Iwasawa manifold: by \cite[page 95]{Nakamura75}, a local system of complex coordinates for the complex structure
at $t=\,\left(t_{11},\,t_{12},\,t_{21},\,t_{22},\,t_{31},\,t_{32}\right)\in\C^6$ is given by
\[
\begin{cases}
 \zeta^t_1 = z_1\,+\,\sum_{j=1}^{2}t_{1j}\,\overline{z}_j, \\[3pt]
 \zeta^t_2 = z_2\,+\,\sum_{j=1}^{2}t_{2j}\,\overline{z}_j, \\[3pt]
 \zeta^t_3 = z_3\,+\,\sum_{j=1}^{2}\big(t_{3j}+t_{2j}\,z_1\big)\overline{z}_j\,+
 \,A(\overline{\mathbf{z}})\,-\,D(t)\,\overline{z}_3,
\end{cases}
\]
where
\begin{eqnarray*}
A(\overline{\mathbf{z}})&\doteq& \frac{1}{2}\left(t_{11}\,t_{21}\,\overline{z}_1\,\overline{z}_1+
2\,t_{11}\,t_{22}\,\overline{z}_1\,\overline{z}_2+t_{12}\,t_{22}\,\overline{z}_2\,\overline{z}_2\right),\\[3pt]
D(t)&\doteq& t_{11}\,t_{22}-t_{12}\,t_{21} .
\end{eqnarray*}
It can be shown that $\C^{3}$ is the universal cover of $M_{t}$ and that $M_{t} = \Gamma_{t} \backslash \C^{3}$ for $\Gamma_{t}$ defined by 
\begin{equation*}
    \begin{cases}
        \zeta^{'}_{1} = \zeta_{1}^{t} + \widetilde{\omega}_{1}, \\[3pt]
        \zeta^{'}_{2} = \zeta_{2}^{t} + \widetilde{\omega}_{2} , \\[3pt]
        \zeta^{'}_{3} = \zeta_{3}^{t} + \widetilde{\omega}_{3} + \omega_{1} \zeta^{t}_{2} + \Big( \sum_{\lambda = 1}^{2} t_{2 \lambda} \overline{\omega}_{\lambda}\Big) (\zeta^{t}_{1} + \omega_{1}) + A(\overline{\omega}) - D(t) \overline{\omega}_{3}, 
    \end{cases}
\end{equation*}
where $\widetilde{\omega}_{j} \doteq \omega_{j} + t_{j1} \overline{\omega}_{1} + t_{j 2} \overline{\omega}_{2}$ and $(\omega_{1}, \omega_{2}, \omega_3) \in \Gamma$. Note that the following $(1, 0)$-forms on $\C^{3}$ 
\begin{equation*}
    \begin{cases}
        \varphi_{t}^{1} \doteq d \zeta^{t}_{1}, \\[3pt]
        \varphi_{t}^{2} \doteq d \zeta^{t}_{2}, \\[3pt]
        \varphi_{t}^{3} \doteq d \zeta^{t}_{3} - z_{1} d \zeta^{t}_{2} - (t_{21} \overline{z}_{1} + t_{22} \overline{z}_{2}) d\zeta^{t}_{1},
    \end{cases}
\end{equation*}
are invariant under the action of $\Gamma_{t}$ and their complex structure equations are given by
\begin{equation}
    \begin{cases}\label{structureIwasawa}
        d\varphi_t^1=0,\\[3pt]
        d\varphi_t^2=0,\\[3pt]
        d\varphi_t^3=\sigma_{12}\varphi_t^{12}+\sigma_{1\overline{1}}\varphi_t^{1\overline{1}}+\sigma_{1\overline{2}}\varphi_t^{1\overline{2}}+\sigma_{2\overline{1}}\varphi_t^{2\overline{1}}+\sigma_{2\overline{2}}\varphi_t^{2\overline{2}}.
    \end{cases}
\end{equation}
Similar computations to those in the Appendix show that $\sigma_{11},\sigma_{1\overline{1}},\sigma_{1\overline{2}},\sigma_{2\overline{1}},\sigma_{2\overline{2}}$ can be written as follows:
\begin{equation*}
    \begin{cases}
        \sigma_{12}=\alpha\gamma\,\big(\,|D(t)|^2-1\big), \\[3pt]
        \sigma_{1\overline{1}}=\alpha\gamma\,\big(\,t_{21}+\overline{t}_{21}D(t)\big), \\[3pt]
        \sigma_{1\overline{2}}=\alpha\gamma\,\big(\,t_{22}-\overline{t}_{11}D(t)\big), \\[3pt]
        \sigma_{2\overline{1}}=\alpha\gamma\,\big(\,\overline{t}_{22}D(t)-t_{11}\big), \\[3pt]
        \sigma_{2\overline{2}}=\alpha\gamma\,\big(\,-t_{12}-\overline{t}_{12}D(t) \big),
    \end{cases}
\end{equation*}
where the values of $\alpha$ and $\gamma$ are as in \cite[page 416]{AngellaTomassini11} and a straightforward computation shows that  
\begin{equation*}
    \alpha \gamma = \frac{1}{1 + |D(t)|^{2} - (|t_{11}|^{2} + |t_{22}|^{2} + t_{12} \overline{t_{21}} + \overline{t_{12}} t_{21})}.
\end{equation*}
By Theorem \ref{FPS Manifold}, $M_{t}$ admits an SKT metric if and only if 
\[
|\sigma_{2\overline{1}}|^2+|\sigma_{1\overline{2}}|^2+|\sigma_{12}|^2-2\mathfrak{Re}\,(\,\overline{\sigma}_{2\overline{2}}\,\sigma_{1\overline{1}})=0. 
\] 
Since $\alpha\gamma\neq0$ for sufficiently small deformations, if we define $E(t)\doteq|t_{11}|^2+|t_{22}|^2+2\mathfrak{Re}\,(\,\overline{t}_{12}t_{21})$, we can rewrite this condition as
\begin{equation}\label{sktIwasawa}
    P(t)=|D(t)|^4+|D(t)|^2 \big(E(t)-6 \big)+E(t)+1=0\;.
\end{equation}
Since $\lim_{t\to0}P(t)=1$, condition \eqref{sktIwasawa} cannot be satisfied for sufficiently small $t$, namely sufficiently small deformations of the canonical complex structure on the Iwasawa manifold cannot admit any SKT metric.

Following \cite[Section 2]{FinoTomassini09}, we focus on deformations for which we have $\sigma_{1\overline{2}}=\sigma_{2\overline{1}}=0$, that is $(t_{22}-\overline{t}_{11}D(t))=(\overline{t}_{22}D(t)-t_{11})=0$. If we suppose $t_{11}$, $t_{22}\neq0$, we have $|t_{11}|=|t_{22}|$ and so $|D(t)|=1$. Thus, condition \eqref{sktIwasawa} becomes $E(t)=2$, that is
    \[
        \mathfrak{Re}\,(\,\overline{t}_{12}t_{21})=1-|t_{22}|^2,
    \]
but, for this choice of $t_{11},t_{22}, t_{12}, t_{21}$, the term $\alpha\gamma$ is not defined.

Suppose that $t_{11}=t_{22}=0$. In this case, $E(t)=2\mathfrak{Re}\,(\,\overline{t}_{12}t_{21})$ and $D(t)=-t_{12}t_{21}$, hence condition \eqref{sktIwasawa} implies
    \[
        2\mathfrak{Re}\,(\,\overline{t}_{12}t_{21})=\frac{-|t_{12}|^4|t_{21}|^4+6|t_{12}|^2|t_{21}|^2-1}{1+|t_{12}|^2|t_{21}|^2}.
    \]
We observe that, if we impose $t_{12}=-t_{21}$ and we denote by $T\doteq|t_{12}|^2=|t_{21}|^2$, condition \eqref{sktIwasawa} is satisfied if and only if 
    \[
        -T^4+2T^3+6T^2+2T-1=0.
    \] 
This equation has real solutions $T=-1, 2-\sqrt{3}, 2+\sqrt{3}$. Since $T$ is non negative, the only solutions we can consider are $T=2-\sqrt{3}, 2+\sqrt{3}$. 

Finally, we note that for $T = 1$ the complex structure is not defined. Indeed, 
\[
\begin{cases}
 \zeta^t_1 = z_1\, - t_{21} \,\overline{z}_2, \\[3pt]
 \zeta^t_2 = z_2\,+\, t_{21}\,\overline{z}_1, \\[3pt]
 \zeta^t_3 = z_3\,+\, t_{21}\,z_1 \overline{z}_1\,+\,t_{21}^{2}\,\overline{z}_3,
\end{cases}
\]
and so 
\[
\begin{cases}
    z_{1} = \frac{1}{1 + |t_{21}|^{2}} (\zeta_{1}^{t} + t_{21} \overline{\zeta^{t}_{2}}), \\[3pt]
    z_{2} = \frac{1}{1 + |t_{21}|^{2}} (\zeta_{2}^{t} + t_{21} \overline{\zeta^{t}_{2}}), \\[3pt]
    z_{3} = \frac{1}{1 - |t_{21}|^{4}} \Big(\zeta_{3}^{t} + t_{21}^{2} \overline{\zeta^{t}_{3}} - \frac{1}{1 + |t_{21}|^{2}} \big(|t_{21}|^{2} \zeta_{1}^{t} \zeta_{2}^{t} + t_{21} \zeta_{1}^{t} \overline{\zeta_{1}^{t}}  + |t_{21}|^{2}t_{21} \zeta_{2}^{t}\overline{\zeta^{t}_{2}} + t_{21}^{2} \overline{\zeta^{t}_{1}} \overline{\zeta^{t}_{2}} \big)  \Big). \\[3pt]
\end{cases}
\]
Hence, the complex structure is not well-defined for $T = |t_{21}|^{2} = 1$. In summary, we have established the following.
\begin{theorem}\label{no SKT intorno a Iwasawa}
    Sufficiently small deformations of the Iwasawa manifold provided in \cite[p. 95]{Nakamura75} cannot admit SKT metrics. 
\end{theorem}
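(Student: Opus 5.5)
The plan is to reduce the SKT condition on each deformed fibre $M_t$ to an algebraic condition on the parameters $t=(t_{11},\dots,t_{32})$, and then check that this condition fails near $t=0$.

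\emph{Reduction to invariant metrics.} I expect the coframe $\{\varphi^1_t,\varphi^2_t,\varphi^3_t\}$ to be invariant under $\Gamma_t$ and to satisfy \eqref{structureIwasawa} with constant coefficients. Then $M_t$ is a nilmanifold, namely the quotient of a $2$-step nilpotent real Lie group by a lattice, and the induced complex structure is invariant. The relevant group is the one whose left-invariant forms are the $\varphi^j_t$ together with their conjugates; this is legitimate because these forms are closed under $d$ with constant coefficients.

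Next I apply the standard symmetrization argument of Fino--Parton--Salamon. If $\omega$ is any SKT metric on $M_t$, average it over the compact quotient using the bi-invariant volume form (nilpotent groups are unimodular). This gives an invariant Hermitian metric $\tilde\omega$. Since averaging commutes with $\del\delbar$ on invariant forms, $\tilde\omega$ is again SKT. So it suffices to exclude invariant SKT metrics. By Theorem \ref{FPS Manifold}, these exist if and only if the structure constants satisfy
\[
|\sigma_{2\overline1}|^2+|\sigma_{1\overline2}|^2+|\sigma_{12}|^2-2\mathfrak{Re}(\overline{\sigma}_{2\overline2}\sigma_{1\overline1})=0 .
\]
The coframe in \eqref{structureIwasawa} already has the normal form required there, with $A=-\sigma_{2\overline1}$ up to reordering, $B=\sigma_{2\overline2}$, $C=\sigma_{1\overline1}$, $D=\sigma_{1\overline2}$ and $E=\sigma_{12}$. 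I only need to check the signs and reorderings when matching $\alpha^{\overline12}$ with $-\varphi^{2\overline1}$; these do not affect the absolute values.

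\emph{Substituting the structure constants.} Factor out $(\alpha\gamma)^2$, which is nonzero near $t=0$, and insert the explicit expressions for the $\sigma$'s. This gives the polynomial condition $P(t)=0$ of \eqref{sktIwasawa}. The key observation is that $D(t)$ and $E(t)$ are continuous and vanish at $t=0$, and that $P(0)=1$; this is just $|\sigma_{12}|^2=|\alpha\gamma|^2$ at $t=0$. By continuity, $P(t)>1/2$ on a neighbourhood of $0$. Hence the SKT condition fails for every sufficiently small $t$, which proves the statement.

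\emph{Main obstacle.} The delicate step is the first one. I must justify that $M_t$ really is a nilmanifold with an invariant complex structure, so that both Theorem \ref{FPS Manifold} and the averaging argument apply to arbitrary, possibly non-invariant, SKT metrics. I also need to confirm the formulas for the $\sigma$'s, computed as in the Appendix, at least to zeroth order. Fortunately, only the leading behaviour near $t=0$ matters: there $\sigma_{12}\to-1$ while the other coefficients tend to $0$.
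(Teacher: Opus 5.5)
Your argument is correct and is essentially the paper's. Both apply Theorem \ref{FPS Manifold} to the invariant coframe \eqref{structureIwasawa}, divide by $|\alpha\gamma|^2$ to get $P(t)=0$, and conclude from $P(t)\to 1$ as $t\to 0$; your averaging step only makes explicit the passage from arbitrary to invariant SKT metrics, which the paper uses implicitly. One small slip: since $\alpha^{\overline{2}2}=-\alpha^{2\overline{2}}$, you have $B=-\sigma_{2\overline{2}}$, and this sign does matter for the cross term. It is harmless here, because your displayed condition already carries the correct sign and the cross term vanishes at $t=0$.
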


\begin{theorem}\label{curva balanced SKT}
    The deformation of the Iwasawa manifold defined as in \cite[p. 95]{Nakamura75} with $t_{11}=t_{22}=0$ and $t_{12}=-t_{21}$ connects the standard complex structure on the Iwasawa manifold to a circle of complex structures that admits SKT metrics, defined by $|t_{21}|^{2} = T = 2-\sqrt{3}$. Moreover, for $|t_{21}|^{2} = T = 2+\sqrt{3}$, we have another circle of complex structures that admit SKT metrics. 
\end{theorem}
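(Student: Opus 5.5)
The plan is to specialize the structure equations \eqref{structureIwasawa} of the deformed Iwasawa manifold to the one-complex-parameter family $t_{11}=t_{22}=0$, $t_{12}=-t_{21}$ (with $t_{31},t_{32}$ arbitrary, as they do not enter the coefficients $\sigma$), and then apply Theorem \ref{FPS Manifold}. With these choices we have $D(t)=-t_{12}t_{21}=t_{21}^{2}$, so $|D(t)|^{2}=T^{2}$ with $T\doteq|t_{21}|^{2}$. Moreover $E(t)=2\mathfrak{Re}(\overline{t}_{12}t_{21})=-2T$, and
\[
\alpha\gamma=\frac{1}{1+T^{2}+2T}=\frac{1}{(1+T)^{2}},
\]
which is finite and nonzero for every $T\ge 0$. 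Consequently $\sigma_{1\overline2}=\sigma_{2\overline1}=0$, while $\sigma_{12}$, $\sigma_{1\overline1}$ and $\sigma_{2\overline2}$ are explicit in $t_{21}$. The deformed structure equations therefore have exactly the shape required in Theorem \ref{FPS Manifold}, with $A=D=0$ (in that theorem's notation), and $M_t$ admits an SKT metric if and only if \eqref{sktIwasawa} holds.

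Next, substitute into $P(t)$:
\[
P=T^{4}+T^{2}(-2T-6)-2T+1=T^{4}-2T^{3}-6T^{2}-2T+1,
\]
which is minus the quartic displayed before the statement. To factor it, use the palindromic structure: dividing by $T^{2}$ and setting $u=T+1/T$ gives $u^{2}-2u-8=0$, so $u=4$ or $u=-2$. The root $u=-2$ gives $T=-1$ (a double root), and $u=4$ gives $T=2\pm\sqrt3$. Hence for $T\ge0$ the SKT condition holds exactly on the two circles $|t_{21}|^{2}=2\pm\sqrt3$. In particular each point of these circles carries an SKT metric, and by Theorem \ref{FPS Manifold} every invariant Hermitian metric there is SKT.

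To see that the family "connects" the Iwasawa manifold to the first circle, consider the path $t_{21}=s e^{i\theta}$ for $s\in[0,(2-\sqrt3)^{1/2}]$. One must check that the complex structure is defined along the whole segment. The only degeneration, found via the explicit inversion of $\zeta^{t}$ before the statement, occurs at $T=1$, since the factor $1-|t_{21}|^{4}$ vanishes there. Because $2-\sqrt3<1$, the path avoids this point, so the $\zeta^{t}$ are honest coordinates and the structure equations are valid throughout. The second circle $T=2+\sqrt3>1$ lies beyond the singular value. There one verifies directly that the explicit inversion formula still makes sense, because $1-T^{2}\neq0$ and $1+T\neq0$, and that $\alpha\gamma$ is finite.

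The main obstacle is justifying that the structure equations with the stated $\sigma$'s, derived for small $t$, remain valid at these non-infinitesimal parameter values. I would handle this by recomputing $d\varphi^{3}_{t}$ directly from the explicit formulas for $\zeta^{t}$ restricted to the family, using the inversion formulas. This shows that the computation is purely algebraic and only requires invertibility, namely $T\neq1$, together with $\alpha\gamma$ being well defined.
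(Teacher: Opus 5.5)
Your proposal is correct and follows essentially the same route as the paper. Both specialize to $t_{11}=t_{22}=0$, $t_{12}=-t_{21}$, so that $|D(t)|^2=T^2$ and $E(t)=-2T$. The SKT condition $P(t)=0$ then becomes $T^4-2T^3-6T^2-2T+1=(T+1)^2(T^2-4T+1)=0$, whose nonnegative roots are $T=2\pm\sqrt3$, and the only excluded value is $T=1$, where the coframe degenerates.

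Your additional checks are consistent with the paper and slightly more careful than its written argument. These are $\alpha\gamma=(1+T)^{-2}$, the path from $t_{21}=0$ that avoids $T=1$, and the direct recomputation of $d\varphi^3_t$, which gives $d\varphi^3_t=\frac{T-1}{1+T}\varphi^{12}_t+\frac{t_{21}}{1+T}(\varphi^{1\overline{1}}_t+\varphi^{2\overline{2}}_t)$ for all $T\neq1$ and matches the structure equations in the paper's subsequent remark.
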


\begin{remark}
    We remark that, for the choice of coefficients $t_{11},t_{22},t_{12},t_{21}$ as in Theorem \ref{curva balanced SKT} there are no balanced metrics. Indeed, the complex structure equations are 
    \begin{equation}
        \begin{cases}
            d\varphi_t^1=0,\\[3pt]
            d\varphi_t^2=0,\\[3pt]
            d\varphi_t^3= \frac{|t_{21}|^{2} - 1}{1 + |t_{21}|^{2}}  \varphi_t^{12} + \frac{t_{21}}{1 + |t_{21}|^{2}}  \varphi_t^{1\overline{1}} + \frac{t_{21}}{1 + |t_{21}|^{2} }\varphi_t^{2\overline{2}},
        \end{cases}
    \end{equation}
    and so  
    \begin{equation*}
        (\overline{t_{21}} \, d \varphi_{t}^{3})^{1,1} = \frac{|t_{21}|^{2}}{1 + |t_{21}|^{2}} \big(\varphi_{t}^{1 \overline{1}} + \varphi_{t}^{2 \overline{2}} \big).
    \end{equation*}
    Thus, by \cite[Proposition 3.4]{HindMedoriTomassini23} there are no balanced metrics on $M_{t}$. In agreement with \cite[Theorem 1.1]{FinoVezzoni2016} for $|t_{21}|^{2} = 2-\sqrt{3},  2+\sqrt{3}$ there are no balanced metrics. 
\end{remark}

We recall that in \cite[Proposition 4.1]{Xiao15} it is shown that strongly Gauduchon metrics are preserved along small deformations of the complex structure. For the choice of coefficients $t_{11}, t_{22}, t_{12}$ and $t_{21}$ as in Theorem \ref{curva balanced SKT}, we can explicitly construct the strongly Gauduchon metrics along $M_{t}$. Indeed, let $\omega_{t}$ be the fundamental form of the diagonal Hermitian metric, i.e., $\omega_{t} \doteq \frac{i}{2} (\varphi^{1 \overline{1}}_{t} + \varphi^{2 \overline{2}}_{t} + \varphi^{3 \overline{3}}_{t})$. Then 
\begin{equation*}
    d \omega_{t}^{2} =  \frac{\overline{t_{21}}}{1 + |t_{21}|^{2}} \varphi_{t}^{123 \overline{12}} + \frac{t_{21}}{1 + |t_{21}|^{2}} \varphi_{t}^{12 \overline{123}}.
\end{equation*}
Moreover, 
\begin{equation*}
        d \Big(\frac{\overline{t_{21}}}{|t_{21}|^{2} - 1} \varphi_{t}^{123 \overline{3}} \Big) = - \frac{\overline{t_{21}}}{1 + |t_{21}|^{2}}  \varphi_{t}^{123 \overline{12}},
\end{equation*}
so $\omega_{t}$ is a strongly Gauduchon metric.

\subsection{Deformations of the holomorphically parallelizable Nakamura manifold}\label{subsection Nakamura}

In the following, we prove that the classes of deformations $(1),(3)$ and $(4)$ (see \cite[Section 3]{Nakamura75}) of the holomorphically parallelizable Nakamura manifold \cite[III-(3b)]{Nakamura75} do not admit any SKT metric \footnote{Part of these computations have been performed by M. G. Franzini in her Master Thesis \cite{Franzini11}, Deformazioni di varietà bilanciate e loro proprietà coomologiche, advisor Prof. A. Tomassini, Università di Parma, 2011; we thank her.}. We recall that the Nakamura manifold of type III-(3b) is defined as $M \doteq \Gamma \backslash \C \ltimes_{\phi} \C^{2}$ where $\phi(z) \doteq \text{Diag}(e^{z}, e^{-z})$ and $\Gamma$ is an appropriate lattice in $\C \ltimes_{\phi} \C^{2}$. In this case, we have that the forms
\begin{equation*}
    \begin{cases}
        \varphi^1=dz_1,\\[3pt]
        \varphi^2=e^{z_{1}}dz_2,\\[3pt]
        \varphi^3=e^{-z_{1}}dz_3, 
    \end{cases}
\end{equation*}
are invariant with respect to the action of $\Gamma$. Hence, they provide a coframe of $(1,0)$-forms for the Nakamura manifold. The dual frame is given by 
\begin{equation*}
    \theta_{1} \doteq \del_{z_{1}}, \quad \theta_{2} \doteq e^{-z_{1}} \theta_{z_{2}}, \quad \theta_{3} \doteq e^{z_{1}} \del_{z_{3}}
\end{equation*}
and the complex structure equations are the following:
\[
\begin{cases}
    d\varphi^1=0,\\[3pt]
    d\varphi^2= \varphi^{1} \wedge \varphi^{2},\\[3pt]
    d\varphi^3= - \varphi^{1} \wedge \varphi^{3}.
\end{cases}
\]
In \cite[Pag. 96]{Nakamura75} it is shown that the $(0,1)$-forms $\widetilde{\varphi}^{1} \doteq\varphi^{\overline{1}}, \widetilde{\varphi}^{2} \doteq e^{z_{1} - \overline{z_{1}}} \varphi^{\overline{2}}, \widetilde{\varphi}^{3} \doteq e^{ \overline{z_{1}} - z_{1}} \varphi^{\overline{3}}$ generate $H_{\delbar}^{0,1}(M)$. Moreover, in \cite[Lemma 3.1]{Nakamura75} it is proved that the deformation defined by
\begin{equation}
    \psi(t) \doteq \sum_{i=1}^{3} \sum_{\lambda=1}^{3} t_{i \lambda} \widetilde{\varphi}^{\lambda} \otimes \theta_{i},
\end{equation}
where $t=t_{j \lambda} \in \C$, satisfies the Maurer-Cartan equation. As shown in \cite[Section 3]{Nakamura75}, the Kuranishi families of the holomorphically parallelizable Nakamura manifold near the origin are parameterized by:
\begin{enumerate}
    \setlength{\itemsep}{.4em}
    \item\label{classe 1 Nakamura} $t_{11} \neq 0$, $t_{12} = t_{13} = t_{23} = t_{32} = 0$;
    \item\label{classe 2 Nakamura} $t_{11} = t_{12} = t_{13} = 0$;
    \item\label{classe 3 Nakamura} $t_{12} \neq 0$, $t_{11} = t_{13} = t_{21} = t_{23} = t_{31} = 0$;
    \item\label{classe 4 Nakamura} $t_{13} \neq 0$, $t_{11} = t_{12} = t_{21} = t_{31} = t_{32} = 0$.
\end{enumerate}

In the following, we recall the fundamental facts about the Kuranishi families of the holomorphically parallelizable Nakamura manifold proved in \cite[Section 3]{Nakamura75} and we prove that the classes \eqref{classe 1 Nakamura}, \eqref{classe 3 Nakamura} and \eqref{classe 4 Nakamura} do not admit any SKT metric.

We recall that for the case \eqref{classe 1 Nakamura}, a system of local holomorphic coordinates for $M_{t}$ is given by 
\[
\begin{cases}
    \zeta_{1}^{t} \doteq z_{1} + t_{11} \overline{z_{1}}, \\[3pt]
    \zeta_{2}^{t} \doteq z_{2} + t_{22} \overline{z_{2}} - \frac{t_{21}}{t_{11}} e^{-z_{1}} (e^{-t_{11} \overline{z_{1}}} - 1), \\[3pt]
    \zeta_{3}^{t} \doteq z_{3} + t_{33} \overline{z_{3}} + \frac{t_{31}}{t_{11}}e^{z_{1}} (e^{t_{11} \overline{z_{1}}} - 1).
\end{cases}
\]
Moreover, the universal cover of $M_{t}$ is $\C^{3}$, with $M_{t} = \Gamma_{t}\backslash C^{3}$ for $\Gamma_{t}$ defined by 
\[
\begin{cases}
    \zeta_{1}^{'} \doteq \zeta^{t}_{1} + \widetilde{\omega}_{1}, \\[3pt]
    \zeta_{2}^{'} \doteq e^{-\omega_{1}}\zeta^{t}_{2} + \widetilde{\omega}_{2} + \frac{t_{21}}{t_{11}} e^{- \zeta_{1}^{t} - \omega_{1}} (1 - e^{-t_{11} \overline{\omega_{1}}}), \\[3pt]
    \zeta_{3}^{'} \doteq e^{\omega_{1}}\zeta_{3}^{t} + \widetilde{\omega}_{3} - \frac{t_{31}}{t_{11}} e^{\zeta_{1}^{t} + \omega_{1}} (1 - e^{-t_{11} \overline{\omega_{1}}}),
\end{cases}
\]
where $\omega_{j} \in \Gamma$ for $j=1,2,3$ and $\widetilde{\omega}_{j} \doteq \omega_{j} + t_{jj} \overline{\omega_{j}}$. A set of $(1,0)$-forms which are invariant with respect to the action of $\Gamma_{t}$ is 
\[
\begin{cases}
    \varphi^{1}_{t} \doteq d\zeta_{1}^{t}, \\[3pt]
    \varphi^{2}_{t} \doteq e^{z_{1}} d\zeta_{2}^{t} - \frac{t_{21}}{t_{11}} e^{z_{1} - \zeta_{1}^{t}} d \zeta_{1}^{t}, \\[3pt]
    \varphi^{3}_{t} \doteq e^{-z_{1}} d \zeta_{3}^{t} - \frac{t_{31}}{t_{11}} e^{-z_{1} + \zeta_{1}^{t}} d \zeta_{1}^{t},
\end{cases}
\]
with structure equations 
\[
\begin{cases}
    d\varphi^{1}_{t} = 0, \\[3pt]
    d\varphi^{2}_{t} = \frac{1}{1-|t_{11}|^{2}} \varphi_{t}^{12} + \frac{t_{11}}{1 - |t_{11}|^{2}} \varphi_{t}^{2 \overline{1}}, \\[3pt]
    d\varphi^{3}_{t} = - \frac{1}{1-|t_{11}|^{2}} \varphi_{t}^{13} - \frac{t_{11}}{1-|t_{11}|^{2}} \varphi_{t}^{3 \overline{1}}.
\end{cases}
\]
In this case, we have
\[
\del \delbar(\varphi_{t}^{2 \overline{2}}) = -\frac{1}{1 - |t_{11}|^{2}} (|t_{11}|^{2} - 2 \mathfrak{Re}(t_{11}) + 1) \varphi_{t}^{12 \overline{12}},
\]
thus, there are no SKT metrics by Proposition \ref{Obstructions to the existence of p pluriclosed}. Moreover, $M_{t}$ admits a balanced metric; indeed, it is straightforward to verify that $d \omega_{t}^{2} = 0$, where $\omega_{t}$ is the fundamental form of the diagonal Hermitian metric on $M_{t}$.

In case \eqref{classe 3 Nakamura}, a system of local holomorphic coordinates is given by 
\[
\begin{cases}
    \zeta_{1}^{t} \doteq z_{1} - \text{log}(1 - t_{12} e^{z_{1}} \overline{z_{2}}), \\[3pt]
    \zeta_{2}^{t} \doteq z_{2} + t_{22} \overline{z_{2}}, \\[3pt]
    \zeta_{3}^{t} \doteq z_{3} + t_{33} \overline{z_{3}} + \frac{t_{32} e^{2 z_{1}} \overline{z_{2}}}{1- t_{12} e^{z_{1}} \overline{z_{2}}}.
\end{cases}
\]
In \cite{Nakamura75}, it is proved that the universal cover of $M_{t}$ is the set 
\[
W_{t} \doteq \{(\zeta_{1}, \zeta_{2}, \zeta_{3}) \in \C^{3} \; | \; (1 - |t_{22}|^{2}) e^{- \zeta_{1}} + t_{12} (\overline{\zeta_{2}} - \overline{t_{22}}\zeta_{2}) \neq 0\}
\]
and that $M_{t}$ can be written as $M_{t} = \Gamma_{t} \backslash W_{t}$, where $\Gamma_{t}$ is generated by
\[
\begin{cases}
    \zeta_{1}^{'} \doteq \zeta_{1} + \omega_{1} - \text{log}(1-t_{12} \overline{\omega_{2}} e^{\zeta_{1} + \omega_{1}}), \\[3pt]
    \zeta_{2}^{'} \doteq e^{-\omega_{1}}\zeta_{2} + \widetilde{\omega}_{2} , \\[3pt]
    \zeta_{3}^{'} \doteq e^{\omega_{1}}\zeta_{3} + \widetilde{\omega}_{3} + \frac{t_{32} \overline{\omega_{2}} e^{2 \zeta_{1} + 2 \omega_{1}}}{1 - t_{12} \overline{\omega_{2}} e^{\zeta_{1} + \omega_{1}}},
\end{cases}
\]
where $\omega_{j} \in \Gamma$ for $j=1,2,3$ and $\widetilde{\omega}_{j} \doteq \omega_{j} + t_{jj} \overline{\omega}_{j}$. Subsequently, K. Hasegawa in \cite{Has10} observed that in this case the complex structure on $M_{t}$ cannot be left-invariant. 

For this class, a set of $(1,0)$-forms invariant with respect to the action of $\Gamma_{t}$ is
\[
\begin{cases}
    \varphi^{1}_{t} \doteq \frac{1 - |t_{22}|^{2}}{A} d\zeta_{1}^{t}, \\[3pt]
    \varphi^{2}_{t} \doteq \frac{1 - |t_{22}|^{2}}{A} d\zeta_{2}^{t}, \\[3pt]
    \varphi^{3}_{t} \doteq \Big( e^{-\zeta_{1}^{t}} + \frac{t_{12}}{1 - |t_{22}|^{2}}(\overline{\zeta_{2}^{t}} - \overline{t_{22}} \zeta_{2}^{t}) \Big) \Big( d\zeta_{3}^{t} - \frac{t_{32}}{t_{12}} e^{\zeta_{1}^{t}} d\zeta_{1}^{t}\Big),
\end{cases}
\]
where $A=A(t,\zeta_{1}^{t}, \zeta_{2}^{t}, \overline{\zeta_{2}^{t}}) \doteq(1-|t_{22}|^{2}) e^{-\zeta_{1}^{t}} + t_{12} (\overline{\zeta_{2}^{t}} - \overline{t_{22} \zeta_{2}^{t}})$. The complex structure equations are 
\[
\begin{cases}
    d\varphi^{1}_{t} = -\frac{t_{12}\overline{t_{22}}}{1 - |t_{22}|^{2}} \varphi_{t}^{12} + \frac{t_{12}}{1 - |t_{22}|^{2}} \frac{\overline{A}}{A} \varphi_{t}^{1\overline{2}}, \\[3pt]
    d\varphi^{2}_{t} = \varphi_{t}^{12} + \frac{t_{12}}{1 - |t_{22}|^{2}} \frac{\overline{A}}{A} \varphi_{t}^{2\overline{2}}, \\[3pt]
    d\varphi^{3}_{t} = - \varphi_{t}^{13} - \frac{t_{12} \overline{t_{22}}}{1 - |t_{22}|^{2}} \varphi_{t}^{23} - \frac{t_{12}}{1 - |t_{22}|^{2}} \frac{\overline{A}}{A} \varphi_{t}^{3\overline{2}}. 
\end{cases}
\]
Since $\del \delbar (\varphi_{t}^{2\overline{2}}) = \varphi_{t}^{12 \overline{12}}$, by Proposition \ref{Obstructions to the existence of p pluriclosed} there are no SKT metrics along small deformations of the complex structure.

Finally, we observe that in this case the diagonal Hermitian metric is balanced. Indeed, a direct computation shows that 
\[ 
d \varphi_{t}^{1 \overline{1} 2 \overline{2}} = 0, \quad d \varphi_{t}^{1 \overline{1} 3 \overline{3}} = 0, \quad d \varphi_{t}^{2 \overline{2} 3 \overline{3}} = 0,
\]
hence $d \omega^{2}_{t} = 0$, where $\omega_{t}$ is the fundamental form of the diagonal Hermitian metric on $M_{t}$.

We mention that case \eqref{classe 4 Nakamura} is similar to case \eqref{classe 3 Nakamura}, while in case \eqref{classe 2 Nakamura} there is no easy obstruction to the existence of SKT metrics. In summary, we have proved the following theorem. 
\begin{theorem}\label{deformation of Nakamura}
Small deformations of the holomorphically parallelizable Nakamura manifold belonging to the classes \eqref{classe 1 Nakamura}, \eqref{classe 3 Nakamura} and \eqref{classe 4 Nakamura} admit balanced metrics, but do not admit any SKT metric.
\end{theorem}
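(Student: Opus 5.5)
The plan is to treat the three classes separately. In each class we use the explicit global $\Gamma_t$-invariant $(1,0)$-coframe $\{\varphi^1_t,\varphi^2_t,\varphi^3_t\}$ built from Nakamura's local holomorphic coordinates $\zeta^t_j$, and we check two things. For the non-existence of SKT metrics we apply Proposition \ref{Obstructions to the existence of p pluriclosed} with $n=3$, $p=1$. There $\alpha$ is a $(2)$-form, and it suffices to find a real $(1,1)$-form $\alpha$ on $M_t$ with $\del\delbar\alpha = c\,\psi\wedge\overline{\psi}$. Here $\psi$ is a decomposable $(2,0)$-form and $c\neq 0$. For existence of balanced metrics we take the diagonal metric $\omega_t=\frac{i}{2}\sum_j\varphi^{j\overline{j}}_t$ and verify $d\omega_t^2=0$. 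This reduces to $d\varphi_t^{j\overline{j}k\overline{k}}=0$ for $j<k$.

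For class \eqref{classe 1 Nakamura} we first differentiate $\varphi^j_t$ using $d\zeta^t_1 = dz_1+t_{11}d\overline{z_1}$. We invert to express $dz_1, d\overline{z_1}$ in terms of $\varphi^1_t,\varphi^{\overline 1}_t$, with factor $(1-|t_{11}|^2)^{-1}$, and obtain the structure equations already displayed. All coefficients are constant, so $\del\delbar\varphi^{2\overline2}_t$ is computed purely algebraically from $d\varphi^2_t$. One gets a nonzero constant times $\varphi_t^{12\overline{12}}$: the factor $|t_{11}|^2-2\mathfrak{Re}(t_{11})+1=|1-t_{11}|^2$ is nonzero for small $t$. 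Taking $\alpha=\varphi_t^{2\overline2}$ (or $i\varphi_t^{2\overline2}$, to make it real up to a constant) and $\psi=\varphi^{12}_t$ gives the obstruction. For the balanced condition, $d\varphi^{1\overline1}_t=0$. Moreover $d\varphi^{2\overline2}_t$ and $d\varphi^{3\overline3}_t$ are combinations of $\varphi^{12\overline2},\varphi^{2\overline{12}},\ldots$ with opposite signs in the $2$ and $3$ slots. Wedging then gives the cancellation $d(\varphi^{2\overline23\overline3}_t)=0$, and the $\varphi^{1\overline1}$ products die because each term already contains $\varphi^1$ or $\varphi^{\overline1}$.

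For class \eqref{classe 3 Nakamura} the main obstacle is the computation of $d\varphi^j_t$. The coframe involves the non-constant function $A$, and the structure "constants" contain $\overline{A}/A$. The plan is to compute $dA$ in terms of $d\zeta^t_1, d\zeta^t_2, d\overline{\zeta^t_2}$, using $dA = -(1-|t_{22}|^2)e^{-\zeta^t_1}d\zeta^t_1 + t_{12}(d\overline{\zeta^t_2}-\overline{t_{22}}d\zeta^t_2)$, and then rewrite everything in the coframe to obtain the displayed equations. Since the coefficients $\overline{A}/A$ are functions, $\del\delbar$ must be computed carefully. We expect, however, the $\overline{A}/A$ contributions to $\del\delbar\varphi^{2\overline2}_t$ to cancel when $d\varphi^2_t\wedge\varphi^{\overline2}_t$ is formed, because $\varphi^{2\overline2}\wedge\varphi^{\overline2}=0$. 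This leaves $\del\delbar\varphi^{2\overline2}_t=\varphi^{12\overline{12}}_t$, and Proposition \ref{Obstructions to the existence of p pluriclosed} applies exactly as before. Balancedness is checked by the same wedge computations. The terms carrying $\varphi^{\overline2}$ in $d\varphi^1_t, d\varphi^2_t, d\varphi^3_t$ are all killed when wedged with $\varphi^{2\overline2}_t$. The remaining $\varphi^{12},\varphi^{13},\varphi^{23}$ terms cancel pairwise between $\varphi^{j}$ and $\varphi^{\overline j}$ factors or vanish by degree, yielding $d\varphi_t^{1\overline12\overline2}=d\varphi_t^{1\overline13\overline3}=d\varphi_t^{2\overline23\overline3}=0$.

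Class \eqref{classe 4 Nakamura} is handled symmetrically to class \eqref{classe 3 Nakamura}. It is obtained by exchanging the roles of the indices $2$ and $3$ (and $z_1\mapsto -z_1$), which turns $t_{12}$ into $t_{13}$. Following Nakamura's coordinates, we write the analogous invariant coframe with $A$ replaced by $(1-|t_{33}|^2)e^{\zeta_1^t}+t_{13}(\overline{\zeta^t_3}-\overline{t_{33}}\zeta^t_3)$. The analogous identity $\del\delbar\varphi^{3\overline3}_t=c\,\varphi^{13\overline{13}}_t$ with $c\neq0$ gives the obstruction, and the diagonal metric is again balanced. Combining the three cases proves the theorem.
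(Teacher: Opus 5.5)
Your proposal is correct and follows essentially the same route as the paper. In each class you work with the $\Gamma_t$-invariant coframe, rule out SKT metrics via Proposition \ref{Obstructions to the existence of p pluriclosed} applied to $\alpha=\varphi_t^{2\overline{2}}$ (or $\varphi_t^{3\overline{3}}$ in class \eqref{classe 4 Nakamura}), and get balancedness from $d\omega_t^2=0$ for the diagonal metric; your reduction of class \eqref{classe 4 Nakamura} to class \eqref{classe 3 Nakamura} via the symmetry $z_1\mapsto -z_1$, $2\leftrightarrow 3$ simply makes explicit what the paper calls ``similar''.
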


\begin{remark}
    In agreement with the Fino–Vezzoni conjecture, the coexistence of balanced and SKT metrics is not possible for small deformations of the holomorphically parallelizable Nakamura manifold, even when the complex structure is not left-invariant.
\end{remark}

\section{The Kuranishi space of a 4-dimensional solvmanifold}\label{Section Kuranishi}

The goal of this section is to construct the Kuranishi space for the $4$-dimensional solvmanifold $M_\pi=\Gamma_\pi\backslash G$ defined in \cite[Section 6]{SferruzzaTomassini24} and to study if small deformations of $M_\pi$ admit SKT metrics. 

The manifold $M_\pi$ is defined starting from the complex Lie group $G=(\C^4,*)$ endowed with the operation $*:\C^4\times\C^4\to\C^4$ given by
\[
    ^t(y_1,y_2,y_3,y_4)*\, ^t(z_1,z_2,z_3,z_4):=\,^t(z_1+y_1,e^{-y_1}z_2+y_2,e^{y_1}z_3+y_3,z_4+y_4+\frac{1}{2}e^{y_1}y_2z_3-\frac{1}{2}e^{-y_1}y_3z_2).
\]
For the definition of the lattice $\Gamma_\pi$, see \cite[Section 6]{SferruzzaTomassini24}. A coframe of left-invariant holomorphic $(1,0)$-forms on $G$ is given in the standard coordinates $\{z_1,z_2,z_3,z_4\}$ of $\C^4$ by 
\begin{equation}\label{coframe4d}
    \varphi^1=dz_1, \quad \varphi^2=e^{z_1}dz_2, \quad \varphi^3=e^{-z_1}dz_3, \quad \varphi^4=dz_4-\frac{1}{2}z_2dz_3 + \frac{1}{2}z_3dz_2,
\end{equation}
and they satisfy the structure equations 
\begin{equation}\label{strequ4d} 
    d\varphi^1=0,\qquad d\varphi^2=\varphi^{12},\qquad d\varphi^3=-\varphi^{13},\qquad d\varphi^4=-\varphi^{23}.
\end{equation}
The frame $\{Z_1,Z_2,Z_3,Z_4\}$ dual to \eqref{coframe4d} is given in holomorphic coordinates by
\[
    Z_1=\frac{\del}{\del z_1}, \quad Z_2=e^{-z_1}\frac{\del}{\del z_2}-\frac{1}{2}e^{-z_1}z_3\frac{\del}{\del z_4}, \quad Z_3=e^{z_1}\frac{\del}{\del z_3}+ \frac{1}{2}e^{z_1}z_2\frac{\del}{\del z_4}, \quad Z_4=\frac{\del}{\del z_4}.
\]
Thanks to \cite[Corollary 6.2]{Kasuya14}, the Dolbeault cohomology group $H^{0,k}_{\delbar}(M_{\pi})$ can be computed by means of the subcomplex $B^{k}_{\Gamma_{\pi}} \subseteq\mathcal{A}^{0, k}(M_{\pi})$ defined by 
\begin{equation*}
    B^{k}_{\Gamma_{\pi}} \doteq \Big\langle \Big(\frac{\overline{\alpha_{I}}}{\alpha_{I}} \Big) \varphi^{\overline{I}} \;| \; I = \{i_{1}, \dots, i_{k}\} \subseteq \{1, \dots, 4\} \; \text{such that } \; \Big(\frac{\overline{\alpha_{I}}}{\alpha_{I}}\Big)|_{\Gamma_{\pi}} = 1  \Big\rangle,
\end{equation*}
where $\alpha_{I} = \alpha_{i_{1}} \cdot \dots \cdot \alpha_{i_{k}}$ and $\alpha_{1} \equiv \alpha_{4} \equiv 1$, $\alpha_{2}(y_{1}) = e^{-y_{1}}$, $\alpha_{3}(y_{1}) = e^{y_{1}}$ are the eigenvalues of the adjoint representation of $G$ and are characters of $\C$. Moreover, by \cite[Theorem 7.2]{Kasuya17} the Schouten bracket is compatible with $B^{k}_{\Gamma_{\pi}}$, hence we can compute the Kuranishi space using this subcomplex. 

As computed in \cite{SferruzzaTomassini24}, we have that the Dolbeault cohomology group $H^{0,1}_{\delbar}(M_\pi)$ is generated by 
\[
    \widetilde{\varphi}^{\overline{1}}\doteq\varphi^{\overline{1}}, \qquad\widetilde{\varphi}^{\overline{2}}\doteq e^{z_1-\overline{z}_1}\varphi^{\overline{2}}, \qquad\widetilde{\varphi}^{\overline{3}}\doteq e^{\overline{z}_1-z_1}\varphi^{\overline{3}}.
\]

A generic deformation for $M_\pi$ is given by $\psi(t)=\sum_{k=1}^\infty\psi_k(t)$, where $\psi_k(t)\in \mathcal{A}^{0,1}(T^{1,0}(M_\pi))$ is homogeneous of degree $k$ in $t$. Using the Maurer-Cartan equation $\delbar\psi(t)-\frac{1}{2}[\psi(t),\psi(t)]=0$, it is possible to express the terms $\psi_{k}(t)$. Since $\delbar\psi_1(t)=0$, the first term of the power expansion of $\psi$ is necessarily $\psi_1(t)=\sum_{\lambda=1}^3\sum_{i=1}^4t_{i\lambda}\widetilde{\varphi}^{\overline{\lambda}}\otimes Z_{i}$. 

The value of $\psi_2(t)$ is determined by the second term of the Maurer-Cartan equation $\delbar\psi_2(t)=\frac{1}{2}[\psi_1(t),\psi_1(t)]$. The right-hand side of this relation is computed thanks to the following relations (see also \cite[Page 97]{Nakamura75}):
\begin{equation}\label{bracketNakamura}
    \begin{aligned}
        &[\widetilde{\varphi}^{\overline{1}}\otimes Z_{\lambda}, \widetilde{\varphi}^{\overline{2}}\otimes Z_{\mu}]=\widetilde{\varphi}^{\overline{1}}\wedge\widetilde{\varphi}^{\overline{2}}\otimes([Z_{\lambda},Z_{\mu}]+\delta_{1\lambda}Z_{\mu});\\
        &[\widetilde{\varphi}^{\overline{1}}\otimes Z_{\lambda}, \widetilde{\varphi}^{\overline{3}}\otimes Z_{\mu}]=\widetilde{\varphi}^{\overline{1}}\wedge\widetilde{\varphi}^{\overline{3}}\otimes([Z_{\lambda},Z_{\mu}]-\delta_{1\lambda}Z_{\mu});\\
        &[\widetilde{\varphi}^{\overline{2}}\otimes Z_{\lambda}, \widetilde{\varphi}^{\overline{3}}\otimes Z_{\mu}]=\widetilde{\varphi}^{\overline{2}}\wedge\widetilde{\varphi}^{\overline{3}}\otimes([Z_{\lambda},Z_{\mu}]-\delta_{1\lambda}Z_{\mu}-\delta_{1\mu}Z_{\lambda}).
    \end{aligned}
\end{equation}
We can write $\frac{1}{2}[\psi_1(t),\psi_1(t)]=\sum_{j=1}^4\eta_j\otimes Z_j$, where 
\begin{align*}
    \eta_1&=t_{11}t_{12}\widetilde{\varphi}^{\overline{1}\overline{2}}-t_{11}t_{13}\widetilde{\varphi}^{\overline{1}\overline{3}}-2t_{12}t_{13}\widetilde{\varphi}^{\overline{2}\overline{3}}, \\
    \eta_2&=t_{12}t_{21}\widetilde{\varphi}^{\overline{1}\overline{2}}+(t_{21}t_{13}-2t_{11}t_{23})\widetilde{\varphi}^{\overline{1}\overline{3}}-2t_{12}t_{23}\widetilde{\varphi}^{\overline{2}\overline{3}}, \\
    \eta_3&=(2t_{11}t_{32}-t_{31}t_{12})\widetilde{\varphi}^{\overline{1}\overline{2}}-t_{13}t_{31}\widetilde{\varphi}^{\overline{1}\overline{3}}-2t_{32}t_{13}\widetilde{\varphi}^{\overline{2}\overline{3}}, \\
    \eta_4&=(t_{11}t_{42}+t_{21}t_{32}-t_{31}t_{22})\widetilde{\varphi}^{\overline{1}\overline{2}}+(t_{21}t_{33}-t_{11}t_{43}-t_{31}t_{23})\widetilde{\varphi}^{\overline{1}\overline{3}}+(t_{22}t_{33}-t_{12}t_{43}-t_{13}t_{42}-t_{23}t_{32})\widetilde{\varphi}^{\overline{2}\overline{3}}.
\end{align*}
By the Maurer-Cartan equation, $\delbar\psi_2(t)=\sum_{i=1}^4\eta_i\otimes Z_i$, hence each $\eta_i$ has zero Dolbeault cohomology. Since the forms $\widetilde{\varphi}^{\overline{1}\overline{2}}$ and $\widetilde{\varphi}^{\overline{1}\overline{3}}$ are Dolbeault harmonic with respect to the diagonal metric defined by the $\widetilde{\varphi}^j$'s, we necessarily have that the coefficients of those forms in $\eta_i$ are zero. We then have the following relations:
\begin{equation}\label{condizioni4d}
    \begin{aligned}
        t_{11}t_{12}&=t_{11}t_{13}=t_{12}t_{21}=(t_{21}t_{13}-2t_{11}t_{23})=(2t_{11}t_{32}-t_{31}t_{12})=t_{13}t_{31}\\[3pt]
        &=(t_{11}t_{42}+t_{21}t_{32}-t_{31}t_{22})=(t_{21}t_{33}-t_{11}t_{43}-t_{31}t_{23})=0.
    \end{aligned}
\end{equation}
The form $\widetilde{\varphi}^{\overline{2}\overline{3}}=\varphi^{\overline{2}\overline{3}}$ is $\delbar$-exact by \eqref{strequ4d}, hence the second term of the deformation $\psi_2(t)$ is given by
\begin{equation}\label{psi2prima}
    \psi_2(t)=\varphi^{\overline{4}}\otimes[2t_{12}t_{13}Z_1+2t_{12}t_{23}Z_2+2t_{32}t_{13}Z_3+(-t_{22}t_{33}+t_{12}t_{43}+t_{13}t_{42}+t_{23}t_{32})Z_4].
\end{equation}

The term $\psi_3(t)$ is determined by the bracket $[\psi_1(t),\psi_2(t)]$. Analogously to equations \eqref{bracketNakamura}, this is computed using the following relations:
\begin{equation}
    \begin{aligned}
        &[\widetilde{\varphi}^{\overline{2}}\otimes Z_{\lambda}, \varphi^{\overline{4}}\otimes Z_{1}]=\widetilde{\varphi}^{\overline{2}}\wedge\varphi^{\overline{4}}\otimes(-Z_{\lambda}-[Z_1,Z_{\lambda}]);\\
        &[\widetilde{\varphi}^{\overline{3}}\otimes Z_{\lambda}, \varphi^{\overline{4}}\otimes Z_{1}]=\widetilde{\varphi}^{\overline{3}}\wedge\varphi^{\overline{4}}\otimes(Z_{\lambda}-[Z_1,Z_{\lambda}]);\\
        &[\widetilde{\varphi}^{\overline{\alpha}}\otimes Z_{\lambda}, \varphi^{\overline{4}}\otimes Z_{\mu}]=\widetilde{\varphi}^{\overline{\alpha}}\wedge\varphi^{\overline{4}}\otimes([Z_{\lambda},Z_{\mu}])\quad\text{ for }\quad(\alpha,\mu)\notin\{(2,1), (3,1)\}.
    \end{aligned}
\end{equation}
Thus, the value of bracket $[\psi_1(t),\psi_2(t)]$ is the following:
\begin{align*}
    [\psi_1(t),\psi_2(t)]=&\widetilde{\varphi}^{\overline{2}}\wedge\varphi^{\overline{4}}\otimes[-2t_{12}^2t_{13}Z_1-2t_{12}^2t_{23}Z_2-2t_{12}t_{13}t_{32}Z_3+(2t_{13}t_{22}t_{32}-2t_{2}t_{12}t_{13}4-2t_{12}t_{23}t_{32})Z_4]\\[3pt]
    &+\widetilde{\varphi}^{\overline{2}}\wedge\varphi^{\overline{4}}\otimes[2t_{12}t_{13}^2Z_1+2t_{12}t_{13}t_{23}Z_2+2t_{13}^2t_{32}Z_3+(2t_{12}t_{13}t_{43}-2t_{12}t_{23}t_{33}+2t_{13}t_{23}t_{32})Z_4].
\end{align*}
Since the forms $\widetilde{\varphi}^{\overline{2}}\wedge\varphi^{\overline{4}}$ and $\widetilde{\varphi}^{\overline{3}}\wedge\varphi^{\overline{4}}$ are Dolbeault harmonic with respect to the diagonal metric defined by the $\widetilde{\varphi}^j$'s, their coefficients must vanish. Thus, $\psi_3(t)=0$ and we additionally have the vanishing of the following terms:
\begin{equation}\label{condizioniBracket12}
    t_{12}t_{13}=t_{12}t_{23}=t_{13}t_{32}=0.
\end{equation}
By \eqref{psi2prima} and \eqref{condizioniBracket12}, the second term of the deformation is 
\begin{equation}\label{psi2}
    \psi_2(t)=\varphi^{\overline{4}}\otimes(-t_{22}t_{33}+t_{12}t_{43}+t_{13}t_{42}+t_{23}t_{32})Z_4.
\end{equation}
Finally, since $[\varphi^{\overline{4}}\otimes Z_4, \varphi^{\overline{4}}\otimes Z_4]$ vanishes trivially, we have $[\psi_2(t),\psi_2(t)]=0$. Thus, the terms of the deformation of degree $\ge 3$ are zero. 

In summary, a generic deformation for $M_{\pi}$ is given by
\begin{equation}
    \psi(t)=\psi_1(t)+\psi_2(t)=\sum_{\lambda=1}^3\sum_{i=1}^4t_{i\lambda}\widetilde{\varphi}^{\overline{\lambda}}\otimes Z_{i}+(-t_{22}t_{33}+t_{12}t_{43}+t_{13}t_{42}+t_{23}t_{32})\varphi^{\overline{4}}\otimes Z_4,
\end{equation}
with the following relations on the parameters:
\begin{equation}\label{condizioniFinali4d}
    \begin{aligned}
        &t_{11}t_{12}=t_{11}t_{13}=t_{12}t_{21}=(t_{21}t_{13}-2t_{11}t_{23})=(2t_{11}t_{32}-t_{31}t_{12})=t_{13}t_{31}\\[3pt]
        &=t_{12}t_{13}=t_{12}t_{23}=t_{13}t_{32}=(t_{11}t_{42}+t_{21}t_{32}-t_{31}t_{22})=(t_{21}t_{33}-t_{11}t_{43}-t_{31}t_{23})=0.
    \end{aligned}
\end{equation}
We can now consider $4$ different classes of deformations, depending on the parameters we assume to vanish:
\begin{enumerate}[label=(\roman*)]
    \setlength{\itemsep}{.4em}
    \item\label{class14d} $t_{11}\neq0$. By \eqref{condizioniFinali4d}, this implies $t_{12}=t_{13}=t_{32}=t_{23}=0$, $t_{11}t_{42}=t_{31}t_{22}$, $t_{11}t_{43}=t_{21}t_{33}$;
    \item\label{class24d} $t_{11}=t_{12}=t_{13}=0$, $t_{21}t_{32}=t_{31}t_{22}$, $t_{21}t_{33}=t_{31}t_{23}$;
    \item\label{class34d} $t_{12}\neq0$. By \eqref{condizioniFinali4d}, this implies $t_{11}=t_{21}=t_{31}=t_{13}=t_{23}=0$;
    \item\label{class44d} $t_{13}\neq0$. By \eqref{condizioniFinali4d}, this implies $t_{11}=t_{12}=t_{21}=t_{31}=t_{32}=0$.
\end{enumerate}

Our goal is to establish whether the classes of deformations \ref{class14d} and \ref{class24d} admit any SKT metric. We begin with class \ref{class14d}. By the conditions on the parameters, the deformation in this case is the following:
\[
    \psi(t)=t_{11}\widetilde{\varphi}^{\overline{1}}\otimes Z_1+(t_{21}\widetilde{\varphi}^{\overline{1}}+t_{22}\widetilde{\varphi}^{\overline{2}})\otimes Z_2+(t_{31}\widetilde{\varphi}^{\overline{1}}+t_{33}\widetilde{\varphi}^{\overline{3}})\otimes Z_3+(t_{41}\widetilde{\varphi}^{\overline{1}}+t_{42}\widetilde{\varphi}^{\overline{2}}+t_{43}\widetilde{\varphi}^{\overline{3}}-t_{22}t_{33}\varphi^{\overline{4}})\otimes Z_4.
\]
Thus, by Lemma \ref{Isomorfismo tra lo spazio delle forme}, a coframe of $(1,0)$-forms for $M_{t}$ is given by $\{\varphi^{1}_{t}, \dots, \varphi^{4}_{t}\}$ where $\varphi^{j}_{t} \doteq e^{\iota_{\varphi(t)}| \iota_{\overline{\varphi(t)}}}(\varphi^{j})$; more precisely,
\[
\begin{cases}
    \varphi^{1}_{t} =  \varphi^{1} + t_{11} \varphi^{\overline{1}},\\[3pt]
    \varphi^{2}_{t} = \varphi^{2} + t_{21} \varphi^{\overline{1}} + t_{22} e^{z_{1} - \overline{z_{1}}} \varphi^{\overline{2}},\\[3pt]
    \varphi^{3}_{t} = \varphi^{3} + t_{31} \varphi^{\overline{1}} + t_{33} e^{\overline{z_{1}} - z_{1}} \varphi^{\overline{3}},\\[3pt]
    \varphi^{4}_{t} = \varphi^{4} + t_{41} \varphi^{\overline{1}}+t_{42}e^{z_{1} - \overline{z_{1}}}\varphi^{\overline{2}}+t_{43}e^{\overline{z_{1}} - z_{1}}\varphi^{\overline{3}}-t_{22}t_{33}\varphi^{\overline{4}}.
\end{cases}
\]
By these relations, the differentials of the elements of the $(1,0)$-coframe are
\[
\begin{cases}
    d \varphi^{1}_{t} = 0,\\[3pt]
    d \varphi^{2}_{t} = \varphi^{12} + t_{22} e^{z_{1} - \overline{z_{1}}} \varphi^{1 \overline{2}}, \\[3pt]
    d \varphi^{3}_{t} = -\varphi^{13} - t_{33} e^{\overline{z_{1}} - z_{1}} \varphi^{1 \overline{3}}, \\[3pt]
    d \varphi^{4}_{t} = - \varphi^{23}+ t_{42} e^{z_{1} - \overline{z_{1}}} \varphi^{1 \overline{2}}- t_{43} e^{\overline{z_{1}} - z_{1}} \varphi^{1 \overline{3}}+t_{22}t_{33}\varphi^{\overline{2}\overline{3}}.
\end{cases}
\]
Hence, to compute the structure equations, we need to express $\varphi^1$, $ \varphi^2$ and $ \varphi^3$ in terms of $\{\varphi^{1}_{t}, \varphi^{\overline{1}}_{t}, \varphi^{2}_{t}, \varphi^{\overline{2}}_{t}, \varphi^{3}_{t}, \varphi^{\overline{3}}_{t}\}$. It is easy to see that
\[
\begin{cases}
    \varphi^{1} = \frac{1}{1-|t_{11}|^{2}} (\varphi^{1}_{t} - t_{11} \varphi_{t}^{\overline{1}}), \\[3pt]
    \varphi^{2} = \frac{1}{1-|t_{22}|^{2}} \Big(\varphi^{2}_{t} - t_{22} e^{z_{1} - \overline{z_{1}}} \varphi_{t}^{\overline{2}} + \Big(\frac{t_{21}\overline{t_{11}} + \overline{t_{21}}t_{22} e^{z_{1}-\overline{z_{1}}}}{1-|t_{11}|^{2}}\Big) \varphi^{1}_{t} - \Big(\frac{t_{21} + t_{11}\overline{t_{21}}t_{22} e^{z_{1}-\overline{z_{1}}}}{1-|t_{11}|^{2}}\Big) \varphi^{\overline{1}}_{t} \Big),\\[3pt]
    \varphi^{3} = \frac{1}{1-|t_{33}|^{2}} \Big( \varphi^{3}_{t} - t_{33} e^{\overline{z_{1}} - z_{1}} \varphi_{t}^{\overline{3}} + \Big(\frac{t_{31}\overline{t_{11}} + \overline{t_{31}}t_{33} e^{\overline{z_{1}} - z_{1}}}{1-|t_{11}|^{2}}\Big) \varphi^{1}_{t} - \Big(\frac{t_{31} + t_{11}\overline{t_{31}}t_{33} e^{\overline{z_{1}} -z_{1}}}{1-|t_{11}|^{2}}\Big) \varphi^{\overline{1}}_{t} \Big).
\end{cases}
\]
We define $T_1\doteq\frac{1}{1-|t_{11}|^2}$, $T_2\doteq\frac{1}{1-|t_{22}|^2}$, $T_3\doteq\frac{1}{1-|t_{33}|^2}$, $A(t)\doteq|t_{11}|^2-|t_{22}|^2+|t_{22}|^2|t_{33}|^2-|t_{11}|^2|t_{22}|^2|t_{33}|^2$ and $E\doteq e^{z_{1} - \overline{z_{1}}}$. By direct computations, it is possible to show that the structure equations for the class of deformation defined by \ref{class14d} are the following:
\[
\begin{aligned}
    d \varphi^{1}_{t} =& 0,\\[3pt]
    d \varphi^{2}_{t} =& T_{1} \Big( \varphi_{t}^{12} + t_{11} \varphi^{2 \overline{1}}_{t} - t_{21} \varphi_{t}^{1 \overline{1}} \Big), \\[3pt]
    d \varphi^{3}_{t} =& - T_{1} \Big(\varphi_{t}^{13} + t_{11} \varphi^{3 \overline{1}}_{t} - t_{31} \varphi_{t}^{1 \overline{1}} \Big), \\[3pt]
    d \varphi^{4}_{t} =& T_1\Big(T_2(Et_{22}t_{31}\overline{t_{21}}+t_{42}t_{21}\overline{t_{22}})-T_3(\overline{E}t_{21}t_{33}\overline{t_{31}}+t_{43}t_{31}\overline{t_{33}})\Big)\varphi_t^{1\overline{1}}\\[3pt]
    &+\frac{T_1T_3}{t_{11}}\Big(T_2t_{31}A(t)+\overline{E}t_{33}t_{11}\overline{t_{31}}\Big)\varphi_t^{12}-\frac{T_1T_2}{t_{11}}\Big(T_3t_{21}A(t)+Et_{22}t_{11}\overline{t_{21}}\Big)\varphi_t^{13}\\[3pt]
    &+T_1T_3\Big(t_{31}+\overline{E}t_{11}t_{33}\overline{t_{31}}\Big)\varphi_t^{2\overline{1}}-T_1T_2\Big(t_{21}+Et_{11}t_{22}\overline{t_{21}}\Big)\varphi_t^{3\overline{1}}+T_2Et_{42}\varphi_t^{1\overline{2}}-T_3\overline{E}t_{43}\varphi_t^{1\overline{3}}\\[3pt]
    &+T_2T_3\Big(|t_{22}|^2|t_{33}|^2-1\Big)\varphi_t^{23}+T_1T_3\overline{E}t_{33}\varphi_t^{2\overline{3}}-T_1T_2Et_{22}\varphi_t^{3\overline{2}}.
    \end{aligned}
\]
We now study the existence of SKT and astheno-K\"ahler metrics for class \ref{class14d}. Since
\begin{align*}
    \ddbar\varphi_t^{1\overline{1}4\overline{4}}=&\Big(T_1^2T_3^2|t_{33}|^2+T_1^2T_2^2|t_{22}|^2+T_2^2T_3^2(|t_{22}|^2|t_{33}|^2-1)^2\Big)\varphi_t^{1\overline{1}2\overline{2}3\overline{3}},\\[3pt]
    \ddbar\varphi_{t}^{2 \overline{2}} =&  T_{1}^{2} (|t_{11}|^{2} - 2 \mathfrak{Re}(t_{11}) + 1) \varphi_{t}^{1 \overline{1} 2\overline{2}},
\end{align*}
by Proposition \ref{Obstructions to the existence of p pluriclosed} there are no SKT and astheno-K\"ahler metrics for small deformations.

\medskip

For class \ref{class24d}, by the conditions on the parameters, the deformation is the following:
\[
    \psi(t)= (t_{21}\widetilde{\varphi}^{\overline{1}}+t_{22}\widetilde{\varphi}^{\overline{2}} + t_{23} \widetilde{\varphi}^{\overline{3}})\otimes Z_2+ (t_{31}\widetilde{\varphi}^{\overline{1}} + t_{32} \widetilde{\varphi}^{\overline{2}} +t_{33}\widetilde{\varphi}^{\overline{3}})\otimes Z_3+(t_{41}\widetilde{\varphi}^{\overline{1}}+t_{42}\widetilde{\varphi}^{\overline{2}}+t_{43}\widetilde{\varphi}^{\overline{3}} - (t_{22}t_{33} - t_{23} t_{32}) \varphi^{\overline{4}})\otimes Z_4.
\]
Thus, working in the same way as above, a coframe of $(1,0)$-forms for $M_{t}$ is given by 
\[
\begin{cases}
    \varphi^{1}_{t} = \varphi^{1}, \\[3pt]
    \varphi^{2}_{t} = \varphi^{2} + t_{21} \varphi^{\overline{1}} + t_{22} e^{z_{1} - \overline{z_{1}}} \varphi^{\overline{2}} + t_{23} e^{\overline{z_{1}} - z_{1} } \varphi^{\overline{3}}, \\[3pt]
    \varphi^{3}_{t} = \varphi^{3} + t_{31} \varphi^{\overline{1}} + t_{32} e^{ z_{1} - \overline{z_{1}} } \varphi^{\overline{2}} + t_{33} e^{\overline{z_{1}} - z_{1}} \varphi^{\overline{3}}, \\[3pt]
    \varphi_{t}^{4} = \varphi^{4} + t_{41} \varphi^{\overline{1}} + t_{42} e^{z_{1} - \overline{z_{1}}} \varphi^{\overline{2}} + t_{43} e^{\overline{z_{1}}-z_{1}} \varphi^{\overline{3}} - (t_{22}t_{33} - t_{23} t_{32}) \varphi^{\overline{4}},
\end{cases}
\]
and
\begin{equation}\label{Differentiale per la seconda deformazione}
    \begin{cases}
    d \varphi^{1}_{t} = 0, \\[3pt]
    d\varphi^{2}_{t} = \varphi^{12} + t_{22} e^{z_{1} - \overline{z_{1}}} \varphi^{1 \overline{2}} - t_{23} e^{\overline{z_{1}} - z_{1} } \varphi^{1 \overline{3}}, \\[3pt]
    d \varphi^{3}_{t} = - \varphi^{13} + t_{32} e^{z_{1} - \overline{z_{1}}} \varphi^{1 \overline{2}} - t_{33} e^{\overline{z_{1}} - z_{1} } \varphi^{1 \overline{3}}, \\[3pt]
    d \varphi^{4}_{t} = - \varphi^{23} + t_{42} e^{z_{1} - \overline{z_{1}}} \varphi^{1 \overline{2}} - t_{43} e^{\overline{z_{1}} - z_{1} } \varphi^{1 \overline{3}} + (t_{22}t_{33} - t_{23} t_{32}) \varphi^{\overline{23}}.
\end{cases}
\end{equation}
By combining formulas \eqref{Differentiale per la seconda deformazione} and \eqref{eq:phi23}, \eqref{eq:phi12}, \eqref{eq:phi13}, \eqref{eq:phi1 2bar}, \eqref{eq:phi1 3bar}, we get 
\begin{equation*}
    \begin{split}
        d \varphi^{2}_{t}  = &  \Big(1 + \frac{2}{f} t_{23}\big(t_{32} \overline{\lambda} + \overline{E}^{2} \overline{t_{32}}\big) \Big)\varphi_{t}^{12} + \frac{2}{f} t_{23} \big(\overline{t_{33}} - t_{22} \overline{\lambda} \big) \varphi_{t}^{13} - \frac{2}{f} E t_{23}\big(t_{32}\overline{t_{33}} + \overline{E}^{2} t_{22} \overline{t_{32}}\big)\varphi_{t}^{1 \overline{2}} \\
        & - \frac{2}{f} \overline{E} t_{23} \big(1 - |t_{22}|^{2} - E^{2}t_{32} \overline{t_{23}} \big) \varphi_{t}^{1\overline{3}} - t_{21} \Big(1 + \frac{2}{f}\big(|t_{33}|^{2} + \overline{E}^{2}t_{23}\overline{t_{32}} \big) \Big) \varphi_{t}^{1 \overline{1}}  ,\\
        d \varphi_{t}^{3} = &  -\Big( 1 + \frac{2}{f} t_{32}\big(t_{23} \overline{\lambda} + E^{2}\overline{t_{23}}\big) \Big)\varphi_{t}^{13} - \frac{2}{f} t_{32} \big(\overline{t_{22}} - t_{33} \overline{\lambda} \big)  \varphi_{t}^{12} + \frac{2}{f} \overline{E} t_{32}\big(t_{23}\overline{t_{22}} + E^{2} t_{33} \overline{t_{23}}\big)\varphi_{t}^{1 \overline{3}} \\
        & + \frac{2}{f} E t_{32} \big(1 - |t_{33}|^{2} - \overline{E}^{2}t_{23} \overline{t_{32}} \big) \varphi_{t}^{1\overline{2}} + t_{31} \Big( 1 + \frac{2}{f} \big(|t_{22}|^{2} + E^{2}t_{32} \overline{t_{23}}\big) \Big) \varphi_{t}^{1 \overline{1}} ,
    \end{split}
\end{equation*}
and 
\begin{equation*}
    \begin{split}
        d \varphi_{t}^{4} = & - \frac{1}{f} \big(1 - |\lambda|^{2}\big) \varphi_{t}^{23} + \frac{1}{f}\big(E t_{32} + \overline{Et_{32}}\lambda \big)\varphi_{t}^{2 \overline{2}} - \frac{1}{f} \big(\overline{E}t_{23}  + E \overline{t_{23}} \lambda\big) \varphi_{t}^{3 \overline{3}} + \frac{1}{f}t_{31} \varphi_{t}^{2 \overline{1}} - \frac{1}{f} t_{21} \varphi_{t}^{3 \overline{1}} \\[3pt]
        & + \frac{1}{f}\Big(t_{42} \big(t_{21} \overline{t_{22}} + E^{2} t_{31} \overline{t_{23}} \big) - t_{43} \overline{E}^{2} \big( t_{21}\overline{t_{32}} + E^{2}t_{31} \overline{t_{33}} \big) \Big) \varphi_{t}^{1\overline{1}}  + \frac{1}{f}\overline{E}\big(t_{33} - \overline{t_{22}} \lambda\big) \varphi_{t}^{2 \overline{3}} - \frac{1}{f}E \big(t_{22} - \overline{t_{33}}\lambda \big) \varphi_{t}^{3 \overline{2}} \\[3pt]
        & + \frac{1}{f}\Big( E t_{32}\overline{t_{21}} + \overline{E}t_{33}\overline{t_{31}} + \overline{E}^{2}t_{43}\big( \overline{t_{32}} - E^{2} \overline{t_{23}} |t_{32}|^{2} +t_{32} \overline{t_{22}} \overline{t_{33}} \big)- t_{42}\big(\overline{t_{22}}(1 - |t_{33}|^{2}) + t_{22} \overline{t_{23}} \overline{t_{32}} \big) \Big) \varphi_{t}^{12} \\[3pt]
        & - \frac{1}{f} \Big(E t_{22}\overline{t_{21}} + \overline{E}t_{23}\overline{t_{31}} + E^{2}t_{42} \big(\overline{t_{23}} - \overline{t_{32}}|t_{23}|^{2}  + t_{23} \overline{t_{22}} \overline{t_{33}} \big) - t_{43} \big( \overline{t_{33}} (1 - |t_{22}|^{2}) + t_{22} \overline{t_{23}} \overline{t_{32}} \big)  \Big)\varphi_{t}^{13} \\[3pt]
        & + \frac{1}{f}\Big( t_{42} E \big(1 - |t_{33}|^{2} - \overline{E}^{2} t_{23} \overline{t_{32}} \big) - t_{43} \overline{E} \big(t_{22} \overline{t_{32}} + E^{2} t_{32} \overline{t_{33}} \big)  \Big)\varphi_{t}^{1 \overline{2}} \\[3pt]
        & + \frac{1}{f}\Big(t_{42} E \big(t_{33} \overline{t_{23}} + \overline{E}^{2} t_{23} \overline{t_{22}} \big) - t_{43} \overline{E} \big(1 - |t_{22}|^{2}  - E^{2} t_{32} \overline{t_{23}}\big)\Big)\varphi_{t}^{1 \overline{3}} ,
    \end{split}
\end{equation*}
where $f \doteq 1 - |t_{22}|^{2} - |t_{33}|^{2} - E^{2}t_{32} \overline{t_{23}} - \overline{E}^{2}t_{23} \overline{t_{32}} + |t_{22}t_{33} -t_{23}t_{32}|^{2}$, $E = e^{z_{1} - \overline{z_{1}}}$ and $\lambda \doteq t_{22}t_{33} - t_{23} t_{32}$.

As in case \ref{class14d}, there are no SKT metrics along small deformations of the complex structure. Indeed, a direct computation shows that 
\begin{equation*}
    \del \delbar ( \varphi_{t}^{1 \overline{1} 4 \overline{4}}) = \frac{1}{f^{2}} \big(2 (1 - |\lambda|^{2})^{2} - f (1 + |\lambda|^{2}) \big) \varphi_{t}^{1 \overline{1} 2 \overline{2} 3 \overline{3}}
\end{equation*}
hence, the thesis follows by Proposition \ref{Obstructions to the existence of p pluriclosed}.

Due to the computational complexity for cases \ref{class34d} and \ref{class44d}, we do not address these two classes of deformations here.

To summarize, we have proved the following theorem. 

\begin{theorem}\label{teorema 4d}
    Small deformations of the solvmanifold $M_\pi=\Gamma_\pi\backslash G$ defined in \cite[Section 6]{SferruzzaTomassini24} belonging to classes \ref{class14d} and \ref{class24d} do not admit any SKT metric. Moreover, small deformations belonging to class \ref{class14d} do not admit any astheno-K\"ahler metric.
\end{theorem}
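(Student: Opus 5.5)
The plan is to apply the obstruction of Proposition \ref{Obstructions to the existence of p pluriclosed} on each deformed fiber $M_t$. We use the global coframes $\{\varphi^1_t,\dots,\varphi^4_t\}$ obtained from Lemma \ref{Isomorfismo tra lo spazio delle forme}, together with the structure equations already computed for classes \ref{class14d} and \ref{class24d}. Since $M_\pi$ has complex dimension $n=4$, the SKT case corresponds to $p=1$. There we look for a real $(2,2)$-form $\alpha$, that is a $(2n-2p-2)$-form, such that $\partial\overline{\partial}\alpha$ is a nonzero constant multiple of the volume-type form $\psi\wedge\overline{\psi}$ with $\psi=\varphi_t^{123}$, or more generally of a sum of such forms with coefficients of constant sign. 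The astheno-K\"ahler case corresponds to $p=n-2=2$. There we need a real $(1,1)$-form whose $\partial\overline{\partial}$ is a same-sign combination of $\varphi^{ab}_t\wedge\overline{\varphi^{ab}_t}$.

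For class \ref{class14d} I would take $\alpha=\varphi_t^{1\overline{1}4\overline{4}}$ for the SKT obstruction. Only $d\varphi^4_t$ contributes nontrivially, because $\varphi^1_t$ is closed and the wedge with $\varphi_t^{1\overline{1}}$ kills every term of $d\varphi^4_t$ containing $\varphi_t^1$ or $\varphi_t^{\overline{1}}$. So $\partial\overline{\partial}\varphi_t^{1\overline{1}4\overline{4}}$ reduces to $-\varphi_t^{1\overline{1}}\wedge\partial\varphi_t^4\wedge\overline{\partial\varphi_t^4}$ plus terms involving $\overline\partial\partial\varphi^4_t$. The surviving coefficients are those of $\varphi_t^{23}$, $\varphi_t^{2\overline 3}$ and $\varphi_t^{3\overline 2}$, namely $T_2T_3(|t_{22}|^2|t_{33}|^2-1)$, $T_1T_3\overline{E}t_{33}$ and $-T_1T_2Et_{22}$. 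Their squared moduli add up to a positive multiple of $\varphi_t^{1\overline{1}2\overline{2}3\overline{3}}$, with $|E|=1$. One must verify that the second-order terms (derivatives of the non-constant coefficients through $E$) do not spoil the positivity. The sum stays strictly positive for small $t$ because it tends to $1$ as $t\to0$.

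For the astheno-K\"ahler obstruction in class \ref{class14d}, I would take $\alpha=\varphi_t^{2\overline 2}$. From $d\varphi^2_t=T_1(\varphi^{12}_t+t_{11}\varphi^{2\overline 1}_t-t_{21}\varphi^{1\overline 1}_t)$ one computes $\partial\overline\partial\varphi^{2\overline2}_t=T_1^2|1-t_{11}|^2\varphi_t^{1\overline12\overline2}$ up to sign. This is $c\,\psi\wedge\overline\psi$ with $\psi=\varphi^{12}_t$ decomposable and $c\neq0$ for small $t_{11}$. This kills $2$-pluriclosed structures, and therefore astheno-K\"ahler metrics. The same computation also gives a second SKT obstruction.

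For class \ref{class24d} I would again use $\alpha=\varphi_t^{1\overline{1}4\overline{4}}$ and the structure equations involving $f$ and $\lambda$. The relevant terms of $d\varphi^4_t$ are $-\frac1f(1-|\lambda|^2)\varphi^{23}_t$, the $\varphi^{2\overline2}_t$ and $\varphi^{3\overline3}_t$ terms, and the $\varphi^{2\overline3}_t$ and $\varphi^{3\overline2}_t$ terms. One should get $\frac{1}{f^2}\big(2(1-|\lambda|^2)^2-f(1+|\lambda|^2)\big)\varphi_t^{1\overline12\overline23\overline3}$. Since $f\to1$ and $\lambda\to0$ as $t\to0$, the coefficient tends to $1>0$, so Proposition \ref{Obstructions to the existence of p pluriclosed} applies for small $t$. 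The main obstacle is bookkeeping, because the coefficients depend on $z_1$ through $E=e^{z_1-\overline{z_1}}$, so $\partial$ and $\overline\partial$ hit them too. I would first check that the terms of type $dE\wedge\cdots$ land in forms already containing $\varphi_t^{1}$ or $\varphi_t^{\overline1}$, and hence vanish against $\varphi_t^{1\overline1}$. That observation is what makes the choice $\alpha=\varphi^{1\overline14\overline4}_t$ effective.
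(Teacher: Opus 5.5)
Your proposal matches the paper's proof. It uses the same test forms: $\varphi_t^{1\overline{1}4\overline{4}}$ for SKT in classes \ref{class14d} and \ref{class24d}, and $\varphi_t^{2\overline{2}}$ for astheno-K\"ahler in class \ref{class14d}. It reaches the same coefficients $T_1^2T_3^2|t_{33}|^2+T_1^2T_2^2|t_{22}|^2+T_2^2T_3^2(|t_{22}|^2|t_{33}|^2-1)^2$, $T_1^2|1-t_{11}|^2$ and $\frac{1}{f^2}\big(2(1-|\lambda|^2)^2-f(1+|\lambda|^2)\big)$, and feeds them into Proposition \ref{Obstructions to the existence of p pluriclosed}; your observation that the $dE$-terms lie in the ideal generated by $\varphi^1_t,\varphi^{\overline{1}}_t$ is exactly why wedging with $\varphi_t^{1\overline{1}}$ works.

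One aside is wrong: $\ddbar\varphi_t^{2\overline{2}}$ does not also give an SKT obstruction in complex dimension $4$. A $(1,1)$-form $\alpha$ only obstructs $2$-pluriclosed structures, while SKT ($p=1$) needs a $(2,2)$-form $\alpha$. Nothing depends on this, since $\varphi_t^{1\overline{1}4\overline{4}}$ already handles SKT.
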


\section{Appendix}\label{Appendice}
The aim of this Appendix is to provide useful formulas for the small deformations of class \ref{class24d} of Section \ref{Section Kuranishi}. 
We recall that a coframe of $(1,0)$-forms for $M_{t}$ as in class \ref{class24d} is given by 
\[
\begin{cases}
    \varphi^{1}_{t} = \varphi^{1}, \\[3pt]
    \varphi^{2}_{t} = \varphi^{2} + t_{21} \varphi^{\overline{1}} + t_{22} e^{z_{1} - \overline{z_{1}}} \varphi^{\overline{2}} + t_{23} e^{\overline{z_{1}} - z_{1} } \varphi^{\overline{3}}, \\[3pt]
    \varphi^{3}_{t} = \varphi^{3} + t_{31} \varphi^{\overline{1}} + t_{32} e^{ z_{1} - \overline{z_{1}} } \varphi^{\overline{2}} + t_{33} e^{\overline{z_{1}} - z_{1}} \varphi^{\overline{3}}, \\[3pt]
    \varphi_{t}^{4} = \varphi^{4} + t_{41} \varphi^{\overline{1}} + t_{42} e^{z_{1} - \overline{z_{1}}} \varphi^{\overline{2}} + t_{43} e^{\overline{z_{1}}-z_{1}} \varphi^{\overline{3}} - (t_{22}t_{33} - t_{23} t_{32}) \varphi^{\overline{4}}.
\end{cases}
\]
It is straightforward to check that 
\[
\varphi^{2} = S_{2} \Big( E t_{22} \overline{t_{21}} \varphi^{1}_{t} - t_{21} \varphi_{t}^{\overline{1}} + \varphi^{2}_{t} - t_{22} E \varphi^{\overline{2}}_{t} + E^{2} t_{22} \overline{t_{23}} \varphi^{3} - t_{23} \overline{E} \varphi^{\overline{3}} \Big),
\]
where $S_{2} \doteq \frac{1}{1 - |t_{22}|^{2}}$, $E = e^{z_{1} - \overline{z_{1}}}$ and
\[
\begin{split}
    \varphi^{3} & = - t_{31} \varphi_{t}^{\overline{1}} - t_{32} E \varphi^{\overline{2}} + \varphi_{t}^{3} - t_{33} \overline{E} \varphi^{\overline{3}}  \\[3pt]
    & = S_{2} E t_{32} \overline{t_{21}} \varphi_{t}^{1} - S_{2} t_{31} \varphi^{\overline{1}}_{t} + S_{2} t_{32} \overline{t_{22}} \varphi_{t}^{2} - S_{2} E t_{32} \varphi_{t}^{\overline{2}} + \varphi_{t}^{3} + S_{2} E^{2} t_{32} \overline{t_{23}} \varphi^{3} - \overline{E} B(t) \varphi^{\overline{3}},
\end{split}
\]
where $B(t) \doteq t_{33} + S_{2} t_{32}  t_{23}\overline{t_{22}}$. By direct computation, we obtain the following
\begin{equation*}\label{Espressione di varphi3}
    \begin{split}
        \varphi^{3} = &  S_{3} \Big( \sigma^{3}_{1} \varphi_{t}^{1} - \sigma^{3}_{\overline{1}} \varphi_{t}^{\overline{1}} + \sigma^{3}_{2} \varphi_{t}^{2} - \sigma^{3}_{\overline{2}} \varphi_{t}^{\overline{2}} + \varphi^{3}_{t} - \sigma_{\overline{3}}^{3} \varphi_{t}^{\overline{3}}  \Big),
    \end{split}
\end{equation*}
where $S_{3} \doteq \frac{S}{1 - |S B(t)|^{2}}$, $S \doteq \frac{1}{1- S_{2}E^{2} t_{32} \overline{t_{23}}}$ and 
\begin{equation*}
    \begin{split}
        \sigma_{1}^{3} \doteq & ES_{2}  t_{32} \overline{t_{21}} + \overline{E}\overline{S}S_{2}  B(t) \overline{t_{31}} = S_{2} \overline{S} E (t_{32} \overline{t_{21}} + \overline{E}^{2} t_{33} \overline{t_{31}}), \\[3pt]
        \sigma^{3}_{\overline{1}} \doteq & S_{2}t_{31} + \overline{E}^{2} \overline{S}S_{2} B(t) t_{21} \overline{t_{32}} = S_{2} \overline{S} t_{31}, \\[3pt]
        \sigma_{2}^{3} \doteq & S_{2} \big(t_{32} \overline{t_{22}} + \overline{E}^{2}\overline{S} B(t)\overline{t_{32}} \big) = S_{2}\overline{S}(t_{32} \overline{t_{22}} + \overline{E}^{2} t_{33}\overline{t_{32}}), \\[3pt]
        \sigma^{3}_{\overline{2}} \doteq & S_{2} \big(E t_{32} + \overline{ES} B(t) \overline{t_{32}} t_{22} \big) = S_{2} \overline{S}E(t_{32} - \overline{E}^{2} t_{23} |t_{32}|^{2} + \overline{E}^{2} t_{22}t_{33} \overline{t_{32}}), \\[3pt]
        \sigma_{\overline{3}}^{3} \doteq & \overline{ES}B(t) = S_{2}\overline{S}  \overline{E} \big(t_{33} (1 - |t_{22}|^{2}) + t_{32}t_{23} \overline{t_{22}} \big).
    \end{split}
\end{equation*}
Thus, we can write
\begin{align}\label{Espressione finale di varphi3}
    \varphi^{3} = &  \frac{1}{f} \Big(E \big(t_{32}\overline{t_{21}} + \overline{E}^{2} t_{33} \overline{t_{31}} \big) \varphi_{t}^{1} - t_{31} \varphi_{t}^{\overline{1}} + (t_{32} \overline{t_{22}} + \overline{E}^{2} t_{33}\overline{t_{32}}) \varphi_{t}^{2} - E(t_{32} - \overline{E}^{2} t_{23} |t_{32}|^{2} + \overline{E}^{2} t_{22}t_{33} \overline{t_{32}}) \varphi_{t}^{\overline{2}} \\
    & + \big(1 - |t_{22}|^{2} - \overline{E}^{2}t_{23} \overline{t_{32}} \big) \varphi^{3}_{t} -\overline{E} \big(t_{33} (1 - |t_{22}|^{2}) + t_{32}t_{23} \overline{t_{22}} \big)\varphi_{t}^{\overline{3}}  \Big),\notag
\end{align}
where $f \doteq \frac{1}{S_{2}S_{3} \overline{S}} = 1 - |t_{22}|^{2} - |t_{33}|^{2} - E^{2}t_{32} \overline{t_{23}} - \overline{E}^{2}t_{23} \overline{t_{32}} + |t_{22}t_{33} -t_{23}t_{32}|^{2}$. 

Now, we can express $\varphi^{2}$ in terms of $\{\varphi^{1}_{t}, \varphi^{\overline{1}}_{t}, \varphi^{2}_{t}, \varphi^{\overline{2}}_{t}, \varphi^{3}_{t}, \varphi^{\overline{3}}_{t}\}$ using \eqref{Espressione di varphi3}. Indeed, straightforward computations show that 
\begin{equation*}\label{Espressione di varphi2}
    \varphi_{2} =  S_{2} \Big( \sigma^{2}_{1}  \varphi^{1}_{t} -\sigma_{\overline{1}}^{2} \varphi_{t}^{\overline{1}} + \sigma_{2}^{2} \varphi_{t}^{2} - \sigma_{\overline{2}}^{2} \varphi_{t}^{\overline{2}} + \sigma_{3}^{2}  \varphi_{t}^{3} - \sigma_{\overline{3}}^{2} \varphi_{t}^{\overline{3}} \Big),
\end{equation*}
where 
\begin{align*}
        & \sigma_{1}^{2} \doteq S_{3} \overline{S} E (t_{22}\overline{t_{21}} + \overline{E}^{2} t_{23} \overline{t_{31}}), \quad && \sigma_{\overline{1}}^{2} \doteq  S_{3}\overline{S}t_{21}, \\[3pt]
        & \sigma_{2}^{2} \doteq  S_{3}\overline{S}(1-|t_{33}|^{2} - E^{2}t_{32}\overline{t_{23}}) , && \sigma_{\overline{2}}^{2} \doteq   S_{3}\overline{S}E\big(t_{22}(1-|t_{33}|^{2}) + t_{23}t_{32}\overline{t_{33}}\big), \\[3pt]
        & \sigma_{3}^{2} \doteq S_{3} \overline{S}(E^{2}t_{22}\overline{t_{23}} + t_{23} \overline{t_{33}}), \quad && \sigma_{\overline{3}}^{2} \doteq   S_{3} \overline{S} \overline{E}( t_{23} - E^{2}t_{32}|t_{23}|^{2} + E^{2} t_{22}t_{33} \overline{t_{23}}).
\end{align*}
Hence, 
\begin{align}\label{Espressione finale di varphi2}
        \varphi^{2} = & \frac{1}{f} \Big( E \big( t_{22}\overline{t_{21}} + \overline{E}^{2} t_{23} \overline{t_{31}}  \big) \varphi^{1}_{t} -t_{21} \varphi_{t}^{\overline{1}} + \big( 1 - |t_{33}|^{2} - E^{2}  t_{32} \overline{t_{23}} \big) \varphi_{t}^{2}  \\
        & - E\big(t_{22}(1 - |t_{33}|^{2}) + t_{23} t_{32} \overline{t_{33}}\big) \varphi_{t}^{\overline{2}} + \big(t_{23}\overline{t_{33}} + E^{2} t_{22}\overline{t_{23}}\big) \varphi_{t}^{3} - \overline{E} ( t_{23} - E^{2}t_{32}|t_{23}|^{2} + E^{2} t_{22}t_{33} \overline{t_{23}}) \varphi_{t}^{\overline{3}} \Big).\notag
\end{align}

Denote by $\lambda \doteq t_{22}t_{33} - t_{23} t_{32}$. By \eqref{Espressione finale di varphi3}, \eqref{Espressione finale di varphi2}, we get 

\begin{align}
    \varphi^{23} = & \frac{1}{f} \Big( - E \big(t_{32} \overline{t_{21}} + \overline{E}^{2} t_{33} \overline{t_{31}} \big)  \varphi_{t}^{12} + E \big(t_{22} \overline{t_{21}} + \overline{E}^{2} t_{23} \overline{t_{31}} \big) \varphi_{t}^{13} + \varphi_{t}^{23} + \overline{t_{31}}\lambda \varphi_{t}^{1 \overline{2}} - \overline{t_{21}} \lambda \varphi_{t}^{1 \overline{3}} \label{eq:phi23} \\
    & - E t_{32} \varphi_{t}^{2 \overline{2}} + \overline{E} t_{23} \varphi_{t}^{3 \overline{3}}  + t_{21} \varphi_{t}^{3 \overline{1}} - t_{31} \varphi_{t}^{2 \overline{1}} - \overline{E} t_{33} \varphi_{t}^{2 \overline{3}}  + E t_{22} \varphi_{t}^{3 \overline{2}} + \overline{\lambda} \varphi_{t}^{\overline{23}}\Big), \notag \\
    \varphi^{12} = & \frac{1}{f} \Big( - t_{21} \varphi_{t}^{1\overline{1}} + \big( 1 - |t_{33}|^{2} - E^{2}  t_{32} \overline{t_{23}} \big) \varphi_{t}^{12} - E\big(t_{22}(1 - |t_{33}|^{2}) +t_{23}t_{32} \overline{t_{33}} \big)  \varphi_{t}^{1\overline{2}} \label{eq:phi12}\\
    & + \big(t_{23}\overline{t_{33}} + E^{2} t_{22}\overline{t_{23}}\big) \varphi_{t}^{13} - \overline{E} (t_{23} - E^{2}t_{32}|t_{23}|^{2} + E^{2} t_{22}t_{33} \overline{t_{23}} )  \varphi_{t}^{1\overline{3}} \Big), \notag \\
    \varphi^{13} = &  \frac{1}{f} \Big(- t_{31} \varphi_{t}^{1\overline{1}} + (t_{32} \overline{t_{22}} + \overline{E}^{2} t_{33}\overline{t_{32}}) \varphi_{t}^{12} - E (t_{32} - \overline{E}^{2} t_{23} |t_{32}|^{2} + \overline{E}^{2}t_{22}t_{33} \overline{t_{32}} ) \varphi_{t}^{1\overline{2}} \label{eq:phi13} \\
    & + \big(1 - |t_{22}|^{2} - \overline{E}^{2}t_{23} \overline{t_{32}} \big) \varphi^{13}_{t} - \overline{E} \big(t_{33}(1 - |t_{22}|^{2}) + t_{23}t_{32} \overline{t_{22}}  \big)\varphi_{t}^{1\overline{3}}  \Big), \notag\\
    \varphi^{1 \overline{2}} = & \frac{1}{f} \Big( \overline{E} \big(t_{21} \overline{t_{22}} + E^{2} t_{31} \overline{t_{23}} \big) \varphi_{t}^{1 \overline{1}} - \overline{E} \big(\overline{t_{22}} (1 - |t_{33}|^{2}) + t_{33} \overline{t_{23}} \overline{t_{32}} \big)\varphi_{t}^{12} + \big(1 - |t_{33}|^{2} - \overline{E}^{2} t_{23} \overline{t_{32}} \big) \varphi_{t}^{1 \overline{2}}   \label{eq:phi1 2bar} \\
    &  - E \big( \overline{t_{23}} - \overline{E}^{2} \overline{t_{32}}|t_{23}|^{2} + \overline{E}^{2} t_{23}\overline{t_{22}}\overline{t_{33}} \big) \varphi_{t}^{13} + \big(t_{33} \overline{t_{23}} + \overline{E}^{2} t_{23} \overline{t_{22}}  \big) \varphi_{t}^{1 \overline{3}}    \Big), \notag \\
    \varphi^{1 \overline{3}} = & \frac{1}{f} \Big( \overline{E} \big(t_{21} \overline{t_{32}} + E^{2}t_{31} \overline{t_{33}} \big) \varphi_{t}^{1 \overline{1}} - \overline{E} \big( \overline{t_{32}} - E^{2} \overline{t_{23}} |t_{32}|^{2} + E^{2} t_{32} \overline{t_{22}} \overline{t_{33}} \big) \varphi_{t}^{1 2} + \big(t_{22} \overline{t_{32}} + E^{2} t_{32} \overline{t_{33}} \big)\varphi_{t}^{1 \overline{2}}  \label{eq:phi1 3bar}\\
    &   - E  \big(\overline{t_{33}} (1 - |t_{22}|^{2}) +t_{22} \overline{t_{23}} \overline{t_{32}} \big) \varphi_{t}^{1 3} + \big(1 - |t_{22}|^{2} - E^{2} t_{32} \overline{t_{23}}  \big) \varphi_{t}^{1 \overline{3}} \Big). \notag
\end{align}

\bibliography{Special_structures}

\begin{bibdiv}
\begin{biblist}

\bib{AbbenaGrassi86}{article}{
      author={Abbena, E.},
      author={Grassi, A.},
       title={Hermitian left invariant metrics on complex {L}ie groups and
  cosymplectic {H}ermitian manifolds},
        date={1986},
     journal={Boll. Un. Mat. Ital. A (6)},
      volume={5},
      number={3},
       pages={371\ndash 379},
}

\bib{Alessandrini17}{article}{
      author={Alessandrini, L.},
       title={Proper modifications of generalized {$p$}-{K}\"ahler manifolds},
        date={2017},
     journal={J. Geom. Anal.},
      volume={27},
      number={2},
       pages={947\ndash 967},
}

\bib{AlessandriniBassanelli90}{article}{
      author={Alessandrini, L.},
      author={Bassanelli, G.},
       title={Small deformations of a class of compact non-{K}\"ahler
  manifolds},
        date={1990},
     journal={Proc. Amer. Math. Soc.},
      volume={109},
      number={4},
       pages={1059\ndash 1062},
}

\bib{AlessandriniBassanelli91modifications}{incollection}{
      author={Alessandrini, L.},
      author={Bassanelli, G.},
       title={Smooth proper modifications of compact {K}\"ahler manifolds},
        date={1991},
   booktitle={Complex analysis ({W}uppertal, 1991)},
      series={Aspects Math.},
      volume={E17},
   publisher={Friedr. Vieweg, Braunschweig},
       pages={1\ndash 7},
}

\bib{AlexandrovIvanov2001}{article}{
      author={Alexandrov, B.},
      author={Ivanov, S.},
       title={Vanishing theorems on {H}ermitian manifolds},
        date={2001},
     journal={Differential Geom. Appl.},
      volume={14},
      number={3},
       pages={251\ndash 265},
}

\bib{Angella13}{article}{
      author={Angella, D.},
       title={The cohomologies of the {I}wasawa manifold and of its small
  deformations},
        date={2013-12},
     journal={The Journal of Geometric Analysis},
      volume={23},
       pages={1355–1378},
}

\bib{AngellaFranziniRossi}{article}{
      author={Angella, D.},
      author={Franzini, M.~G.},
      author={Rossi, F.~A.},
       title={Degree of non-{K}\"ahlerianity for 6-dimensional nilmanifolds},
        date={2015},
     journal={Manuscripta Math.},
      volume={148},
      number={1-2},
       pages={177\ndash 211},
}

\bib{AngellaGuedjLu}{article}{
      author={Angella, D.},
      author={Guedj, V.},
      author={Lu, C.~H.},
       title={Plurisigned {H}ermitian metrics},
        date={2023},
     journal={Trans. Amer. Math. Soc.},
      volume={376},
      number={7},
       pages={4631\ndash 4659},
}

\bib{AngellaTomassini11}{article}{
      author={Angella, D.},
      author={Tomassini, A.},
       title={On cohomological decomposition of almost-complex manifolds and
  deformations},
        date={2011},
        ISSN={0926-2245},
     journal={Journal of Symplectic Geometry},
      volume={9},
       pages={403\ndash 428},
}

\bib{AngellaTomassini13}{article}{
      author={Angella, D.},
      author={Tomassini, A.},
       title={On the {$\partial\overline{\partial}$}-lemma and {B}ott-{C}hern
  cohomology},
        date={2013},
     journal={Invent. Math.},
      volume={192},
      number={1},
       pages={71\ndash 81},
}

\bib{AngellaUgarte17}{article}{
      author={Angella, D.},
      author={Ugarte, L.},
       title={On small deformations of balanced manifolds},
        date={2017},
     journal={Differential Geom. Appl.},
      volume={54},
       pages={464\ndash 474},
}

\bib{Bismut89}{article}{
      author={Bismut, J.-M.},
       title={A local index theorem for non-{K}\"ahler manifolds},
        date={1989},
     journal={Math. Ann.},
      volume={284},
      number={4},
       pages={681\ndash 699},
}

\bib{Ciulica25}{article}{
      author={Ciulică, C.},
       title={A new class of non-{K}\"ahler metrics},
        date={2025},
     journal={Complex Manifolds},
      volume={12},
      number={1},
       pages={Paper No. 20250014, 14},
}

\bib{CiulicaOtimanStanciu}{article}{
      author={Ciulică, C.},
      author={Otiman, A.},
      author={Stanciu, M.},
       title={Special non-{K}\"ahler metrics on {E}ndo-{P}ajitnov manifolds},
        date={2025},
     journal={Ann. Mat. Pura Appl. (4)},
      volume={204},
      number={4},
       pages={1425\ndash 1441},
}

\bib{CorderoFernandezGrayUgarte97}{article}{
      author={Cordero, L.},
      author={Fernández, M.},
      author={Gray, A.},
      author={Ugarte, L.},
       title={Nilpotent complex structures on compact nilmanifolds},
        date={1997},
     journal={Rend. Circ. Mat. Palermo (2) Suppl.},
      volume={49},
       pages={83\ndash 100},
}

\bib{CorderoFernandezGrayUgarte00}{article}{
      author={Cordero, L.~A.},
      author={Fern\'andez, M.},
      author={Gray, A.},
      author={Ugarte, L.},
       title={Compact nilmanifolds with nilpotent complex structures:
  {D}olbeault cohomology},
        date={2000},
     journal={Trans. Amer. Math. Soc.},
      volume={352},
      number={12},
       pages={5405\ndash 5433},
}

\bib{Ehresmann47}{article}{
      author={Ehresmann, C.},
       title={Sur les espaces fibr\'es diff\'erentiables},
        date={1947},
        ISSN={0001-4036},
     journal={C. R. Acad. Sci. Paris},
      volume={224},
       pages={1611\ndash 1612},
}

\bib{FinoGrantcharovVezzoni19}{article}{
      author={Fino, A.},
      author={Grantcharov, G.},
      author={Vezzoni, L.},
       title={Astheno-{K}\"ahler and balanced structures on fibrations},
        date={2019},
     journal={Int. Math. Res. Not. IMRN},
      number={22},
}

\bib{FinoKasuyaVezzoni15}{article}{
      author={Fino, A.},
      author={Kasuya, H.},
      author={Vezzoni, L.},
       title={S{KT} and tamed symplectic structures on solvmanifolds},
        date={2015},
     journal={Tohoku Math. J. (2)},
      volume={67},
      number={1},
       pages={19\ndash 37},
}

\bib{FinoPartonSalamon04}{article}{
      author={Fino, A.},
      author={Parton, M.},
      author={Salamon, S.},
       title={Families of strong {KT} structures in six dimensions},
        date={2004},
     journal={Comment. Math. Helv.},
      volume={79},
      number={2},
       pages={317\ndash 340},
}

\bib{FinoTomassini09}{article}{
      author={Fino, A.},
      author={Tomassini, A.},
       title={Blow-ups and resolutions of strong {K}\"ahler with torsion
  metrics},
        date={2009},
     journal={Adv. Math.},
      volume={221},
      number={3},
       pages={914\ndash 935},
}

\bib{FinoTomassini11}{article}{
      author={Fino, A.},
      author={Tomassini, A.},
       title={On astheno-{K}\"ahler metrics},
        date={2011},
     journal={J. Lond. Math. Soc. (2)},
      volume={83},
      number={2},
       pages={290\ndash 308},
}

\bib{FinoVezzoni15}{article}{
      author={Fino, A.},
      author={Vezzoni, L.},
       title={Special {H}ermitian metrics on compact solvmanifolds},
        date={2015},
     journal={J. Geom. Phys.},
      volume={91},
       pages={40\ndash 53},
}

\bib{FinoVezzoni2016}{article}{
      author={Fino, A.},
      author={Vezzoni, L.},
       title={On the existence of balanced and {SKT} metrics on nilmanifolds},
        date={2016},
     journal={Proc. Amer. Math. Soc.},
      volume={144},
      number={6},
       pages={2455\ndash 2459},
}

\bib{Franzini11}{misc}{
      author={Franzini, M.~G.},
       title={Deformazioni di varietà bilanciate e loro proprietà
  coomologiche},
        type={Tesi di Laurea Magistrale},
        date={2011},
        note={Tesi di Laurea Magistrale, Università di Parma},
}

\bib{Gauduchon77-fibres}{article}{
      author={Gauduchon, P.},
       title={Fibr\'es hermitiens \`a{} endomorphisme de {R}icci non
  n\'egatif},
        date={1977},
     journal={Bull. Soc. Math. France},
      volume={105},
      number={2},
       pages={113\ndash 140},
}

\bib{Gauduchon84}{article}{
      author={Gauduchon, P.},
       title={La {$1$}-forme de torsion d'une vari\'et\'e{} hermitienne
  compacte},
        date={1984},
     journal={Math. Ann.},
      volume={267},
      number={4},
       pages={495\ndash 518},
}

\bib{Gauduchon97}{article}{
      author={Gauduchon, P.},
       title={Hermitian connections and {D}irac operators},
        date={1997},
     journal={Boll. Un. Mat. Ital. B (7)},
      volume={11},
      number={2},
       pages={257\ndash 288},
}

\bib{GotoGeneralizedKahler}{article}{
      author={Goto, R.},
       title={Deformations of generalized complex and generalized {K}\"ahler
  structures},
        date={2010},
     journal={J. Differential Geom.},
      volume={84},
      number={3},
       pages={525\ndash 560},
}

\bib{GrayHervella80}{article}{
      author={Gray, A.},
      author={Hervella, L.~M.},
       title={The sixteen classes of almost {H}ermitian manifolds and their
  linear invariants},
        date={1980},
     journal={Ann. Mat. Pura Appl. (4)},
      volume={123},
       pages={35\ndash 58},
}

\bib{GuanLi}{article}{
      author={Guan, B.},
      author={Li, Q.},
       title={Complex {M}onge-{A}mp\`ere equations and totally real
  submanifolds},
        date={2010},
     journal={Adv. Math.},
      volume={225},
      number={3},
       pages={1185\ndash 1223},
}

\bib{HarveyKnapp74}{incollection}{
      author={Harvey, R.},
      author={Knapp, A.~W.},
       title={Positive {$(p,\,p)$} forms, {W}irtinger' inequality, and
  currents},
        date={1974},
   booktitle={Value distribution theory ({P}roc. {T}ulane {U}niv. {P}rogram,
  {T}ulane {U}niv., {N}ew {O}rleans, {L}a., 1972-1973), {P}art {A}},
      series={Pure Appl. Math.},
      volume={25},
   publisher={Dekker, New York},
       pages={43\ndash 62},
}

\bib{Has10}{article}{
      author={Hasegawa, K.},
       title={Small deformations and non-left-invariant complex structures on
  six-dimensional compact solvmanifolds},
        date={2010},
     journal={Differential Geom. Appl.},
      volume={28},
      number={2},
       pages={220\ndash 227},
}

\bib{HindMedoriTomassini23}{article}{
      author={Hind, R.},
      author={Medori, C.},
      author={Tomassini, A.},
       title={Families of almost complex structures and transverse
  {$(p,p)$}-forms},
        date={2023},
     journal={J. Geom. Anal.},
      volume={33},
      number={10},
       pages={Paper No. 334, 23},
}

\bib{Huybrechts05}{book}{
      author={Huybrechts, D.},
       title={Complex {G}eometry},
    subtitle={An {I}ntroduction},
      series={Universitext},
   publisher={Springer Berlin Heidelberg},
        date={2005},
}

\bib{JostYau93}{article}{
      author={Jost, J.},
      author={Yau, S.-T.},
       title={A nonlinear elliptic system for maps from {H}ermitian to
  {R}iemannian manifolds and rigidity theorems in {H}ermitian geometry},
        date={1993},
     journal={Acta Math.},
      volume={170},
      number={2},
       pages={221\ndash 254},
}

\bib{Kasuya14}{article}{
      author={Kasuya, H.},
       title={de {R}ham and {D}olbeault cohomology of solvmanifolds with local
  systems},
        date={2014},
     journal={Math. Res. Lett.},
      volume={21},
      number={4},
       pages={781\ndash 805},
}

\bib{Kasuya17}{article}{
      author={Kasuya, H.},
       title={Generalized deformations and holomorphic {P}oisson cohomology of
  solvmanifolds},
        date={2017},
     journal={Ann. Global Anal. Geom.},
      volume={51},
      number={2},
       pages={155\ndash 177},
}

\bib{Kodaira05}{book}{
      author={Kodaira, K.},
       title={Complex {M}anifolds and {D}eformations of {C}omplex
  {S}tructures},
      series={Classics in Mathematics},
   publisher={Springer Berlin, Heidelberg},
        date={2005},
}

\bib{KodairaMorrow06}{book}{
      author={Kodaira, K.},
      author={Morrow, J.},
       title={Complex {M}anifolds},
   publisher={AMS Chelsea Publishing},
        date={2006},
         url={https://books.google.it/books?id=eK1N_aPZ0ogC},
}

\bib{KodairaSpencer60}{article}{
      author={Kodaira, K.},
      author={Spencer, D.~C.},
       title={On deformations of complex analytic structures. {III}.
  {S}tability theorems for complex structures},
        date={1960},
     journal={Ann. of Math. (2)},
      volume={71},
       pages={43\ndash 76},
}

\bib{LatorreUgarte2017}{article}{
      author={Latorre, A.},
      author={Ugarte, L.},
       title={On non-{K}\"ahler compact complex manifolds with balanced and
  astheno-{K}\"ahler metrics},
        date={2017},
     journal={C. R. Math. Acad. Sci. Paris},
      volume={355},
      number={1},
       pages={90\ndash 93},
}

\bib{Malcev62}{article}{
      author={Malcev, A.~I.},
       title={On a class of homogeneous spaces},
        date={1962},
     journal={Amer. Math. Soc. Translation Ser. 1},
      volume={9},
       pages={276\ndash 307},
}

\bib{MatsuoTakahashi2001}{article}{
      author={Matsuo, K.},
      author={Takahashi, T.},
       title={On compact astheno-{K}\"ahler manifolds},
        date={2001},
     journal={Colloq. Math.},
      volume={89},
      number={2},
       pages={213\ndash 221},
}

\bib{Michelsohn82}{article}{
      author={Michelsohn, M.~L.},
       title={On the existence of special metrics in complex geometry},
        date={1982},
     journal={Acta Math.},
      volume={149},
      number={3-4},
       pages={261\ndash 295},
}

\bib{Nakamura75}{article}{
      author={Nakamura, I.},
       title={Complex parallelisable manifolds and their small deformations},
        date={1975},
     journal={J. Differential Geometry},
      volume={10},
       pages={85\ndash 112},
}

\bib{PiovaniSferruzza21}{article}{
      author={Piovani, R.},
      author={Sferruzza, T.},
       title={Deformations of {S}trong {K}\"ahler with torsion metrics},
        date={2021},
     journal={Complex Manifolds},
      volume={8},
      number={1},
       pages={286\ndash 301},
}

\bib{Popovici13}{article}{
      author={Popovici, D.},
       title={Deformation limits of projective manifolds: {H}odge numbers and
  strongly {G}auduchon metrics},
        date={2013},
     journal={Invent. Math.},
      volume={194},
      number={3},
       pages={515\ndash 534},
}

\bib{RaoWanZhao18}{article}{
      author={Rao, S.},
      author={Wan, X.},
      author={Zhao, Q.},
       title={On local stabilities of {$p$}-{K}\"ahler structures},
        date={2019},
     journal={Compos. Math.},
      volume={155},
      number={3},
       pages={455\ndash 483},
}

\bib{RaoWanZhao21}{article}{
      author={Rao, S.},
      author={Wan, X.},
      author={Zhao, Q.},
       title={Power series proofs for local stabilities of {K}\"ahler and
  balanced structures with mild {$\partial \overline {\partial }$}-lemma},
        date={2022},
     journal={Nagoya Math. J.},
      volume={246},
       pages={305\ndash 354},
}

\bib{RaoZhao15}{article}{
      author={Rao, S.},
      author={Zhao, Q.},
       title={Extension formulas and deformation invariance of {H}odge
  numbers},
        date={2015},
     journal={C. R. Math. Acad. Sci. Paris},
      volume={353},
      number={11},
       pages={979\ndash 984},
}

\bib{RaoZhao18_2}{article}{
      author={Rao, S.},
      author={Zhao, Q.},
       title={Several special complex structures and their deformation
  properties},
        date={2018},
     journal={J. Geom. Anal.},
      volume={28},
      number={4},
       pages={2984\ndash 3047},
}

\bib{Sferruzza22}{article}{
      author={Sferruzza, T.},
       title={Deformations of balanced metrics},
        date={2022},
     journal={Bull. Sci. Math.},
      volume={178},
       pages={Paper No. 103143, 23},
}

\bib{Sferruzza23}{article}{
      author={Sferruzza, T.},
       title={Deformations of astheno-{K}\"ahler metrics},
        date={2023},
     journal={Complex Manifolds},
      volume={10},
      number={1},
       pages={Paper No. 20230102, 19},
}

\bib{SferruzzaTomassini23}{article}{
      author={Sferruzza, T.},
      author={Tomassini, A.},
       title={On cohomological and formal properties of strong {K}\"ahler with
  torsion and astheno-{K}\"ahler metrics},
        date={2023},
     journal={Math. Z.},
      volume={304},
      number={4},
       pages={Paper No. 55, 27},
}

\bib{SferruzzaTomassini24}{article}{
      author={Sferruzza, T.},
      author={Tomassini, A.},
       title={Bott-{C}hern formality and {M}assey products on strong {K}\"ahler
  with torsion and {K}\"ahler solvmanifolds},
        date={2024},
     journal={J. Geom. Anal.},
      volume={34},
      number={11},
}

\bib{Wang54}{article}{
      author={Wang, H.-C.},
       title={Complex parallisable manifolds},
        date={1954},
     journal={Proceedings of the American Mathematical Society},
      volume={5},
       pages={771\ndash 776},
}

\bib{Wu06}{book}{
      author={Wu, C.-C.},
       title={On the geometry of superstrings with torsion},
   publisher={ProQuest LLC, Ann Arbor, MI},
        date={2006},
        note={Thesis (Ph.D.)--Harvard University},
}

\bib{Xiao15}{article}{
      author={Xiao, J.},
       title={On strongly {G}auduchon metrics of compact complex manifolds},
        date={2015},
        ISSN={1050-6926,1559-002X},
     journal={J. Geom. Anal.},
      volume={25},
      number={3},
       pages={2011\ndash 2027},
}

\bib{Yamada05}{article}{
      author={Yamada, T.},
       title={A construction of compact pseudo-{K}\"ahler solvmanifolds with no
  {K}\"ahler structures},
        date={2005},
     journal={Tsukuba J. Math.},
      volume={29},
      number={1},
       pages={79\ndash 109},
         url={https://doi.org/10.21099/tkbjm/1496164895},
}

\end{biblist}
\end{bibdiv}

\end{document}